\documentclass[10pt,a4paper]{amsart}

\usepackage{amsmath,amssymb,amsthm,mathrsfs}
\usepackage{microtype}
\usepackage[hidelinks]{hyperref}
\usepackage{txfonts}
\usepackage{graphicx}
\usepackage{tikz-cd}

\newtheorem{theorem}{Theorem}[section]
\newtheorem{lemma}[theorem]{Lemma}
\newtheorem{proposition}[theorem]{Proposition}
\newtheorem{corollary}[theorem]{Corollary}

\theoremstyle{definition}
\newtheorem{definition}[theorem]{Definition}
\newtheorem{example}[theorem]{Example}

\theoremstyle{remark}
\newtheorem{remark}[theorem]{Remark}

\numberwithin{equation}{section}

\newcommand{\shf}[1]{\mathscr{#1}}
\newcommand{\is}[1]{\mathscr{I}_{#1}}
\newcommand{\ox}{\mathcal O_X}
\newcommand{\Pic}{\operatorname{Pic}}
\newcommand{\NS}{\operatorname{NS}}
\newcommand{\rk}[1]{\operatorname{rk}(#1)}
\newcommand{\IT}{\operatorname{IT}}
\newcommand{\Sym}{\operatorname{Sym}}
\newcommand{\Ext}{\operatorname{Ext}}
\newcommand{\FM}{\Phi_{\mathcal P}}
\newcommand{\CFM}{\Phi^H_{\mathcal P}}
\newcommand{\widehatstar}{\mathbin{\widehat{*}}}

\newcommand{\sshf}[1]{\mathcal{O}_{#1}}
\newcommand{\iso}{\simeq}
\newcommand{\ses}[3]{0\rightarrow#1\rightarrow#2\rightarrow#3\rightarrow{0}}

\newcommand{\paren}[1]{\left(#1\right)}

\def\ch{\ensuremath{\mathrm{ch}}}
\def\C{\ensuremath{\mathbb{C}}}
\def\R{\ensuremath{\mathbb{R}}}
\def\Z{\ensuremath{\mathbb{Z}}}
\def\Ker{\mathop{\mathrm{Ker}}\nolimits}
\def\Kn{\mathrm{K}_{\mathrm{num}}}
\def\cA{\ensuremath{\mathcal A}}

\def\Stab{\mathop{\mathrm{Stab}}\nolimits}
\def\Coh{\mathop{\mathrm{Coh}}\nolimits}
\def\Hom{\mathop{\mathrm{Hom}}\nolimits}

\def\Forg{\ensuremath{\mathrm{Forg}}}
\def\Db{\mathrm{D}^{b}}

\title[A Reider-Type Criterion]{Syzygies of Polarized Abelian Surfaces: A Reider-Type Criterion}
\author{Chunyi Li, Lei Song and Xiao Wang}
\date{}

\subjclass[2020]{14K05, 14N05, 14F08}
\keywords{Abelian surface, Property $N_p$, Fourier--Mukai transform, basepoint-freeness threshold, Bridgeland stability,
semihomogeneous vector bundle}

\begin{document}

\begin{abstract}
Let $(X,L)$ be a polarized complex abelian surface with $L^2=2d$. We establish a Reider-type criterion for Property $N_p$. If $d\geq7$, then $L$ satisfies Property $N_0$ if and only if there is no elliptic curve $E\subseteq X$ with $L\cdot E\leq2$, with one explicitly described exception. If $p\geq1$ and $d\geq(p+2)^2+1$, then $L$ satisfies Property $N_p$ if and only if there is no elliptic curve $E\subseteq X$ with $L\cdot E\leq p+2$. The numerical bounds on $d$ are optimal for $p=0,1$. These results improve upon earlier work of K\"{u}ronya--Lozovanu, Ito, and Rojas. We also construct a polarized abelian surface whose basepoint-freeness threshold is irrational.
\end{abstract}

\maketitle

\section{Introduction}
\subsection{Main results}
Reider's celebrated theorem \cite[Theorem~1]{Reider88} asserts that if $X$ is a smooth complex projective surface and $L$ is a nef line bundle with $L^2\geq5$, then $K_X+L$ is basepoint-free unless there exists a curve $E\subseteq X$ satisfying one of the following conditions:
\begin{itemize}
    \item[(i)] $L\cdot E=0$ and $E^2=-1$;
    \item[(ii)] $L\cdot E=1$ and $E^2=0$.
\end{itemize}
The underlying principle is that, once $L$ has sufficiently large volume, any failure of positivity for $K_X+L$ should be explained by the existence of special subvarieties of small $L$-degree.

On an abelian surface, every nef line bundle $L$ with $L^2>0$ is ample, and $L^2$ is necessarily even. Reider's theorem therefore takes the following particularly simple form.

\begin{proposition}[Reider]
Let $(X,L)$ be a polarized abelian surface with $L^2\geq6$. Then the following conditions are equivalent:
\begin{itemize}
    \item[(i)] $L$ is basepoint-free;
    \item[(ii)] there is no elliptic curve $E\subseteq X$ such that $L\cdot E\leq1$.
\end{itemize}
\end{proposition}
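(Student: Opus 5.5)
The plan is to apply Reider's theorem as quoted above, with $K_X=\sshf{X}$ trivial, and then to handle the converse direction with a direct restriction argument.

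\emph{(ii)$\Rightarrow$(i).} On an abelian surface the canonical bundle is trivial, so $K_X+L=L$. Since $L$ is ample, it is nef with $L^2\geq6\geq5$. If $L$ fails to be basepoint-free, Reider's theorem gives a curve $E\subseteq X$ of one of two kinds:
\begin{itemize}
    \item[(i)] $L\cdot E=0$ and $E^2=-1$;
    \item[(ii)] $L\cdot E=1$ and $E^2=0$.
\end{itemize}
Case (i) is impossible for two reasons. Ampleness forces $L\cdot E>0$, and the intersection form on an abelian surface is even, so $E^2=-1$ cannot occur.

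In case (ii), $E$ is an irreducible, reduced curve with $E^2=0$. I will check that $E$ is an elliptic curve, which I regard as the only real point to verify. By adjunction with $K_X$ trivial, $2p_a(E)-2=E^2=0$, so $p_a(E)=1$. Every curve on an abelian variety has geometric genus at least $1$, since rational curves do not exist. Therefore $E$ is smooth of genus $1$, and after translation it is an elliptic subgroup. This gives an elliptic curve with $L\cdot E\leq1$, contradicting (ii).

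\emph{(i)$\Rightarrow$(ii).} Suppose $E\subseteq X$ is elliptic with $L\cdot E\leq1$. Ampleness gives $L\cdot E=1$, so $L|_E$ is a line bundle of degree $1$ on a genus-$1$ curve. Such a bundle has a unique section, and that section vanishes at a point, so $L|_E$ has a base point. Since restriction of sections $H^0(X,L)\to H^0(E,L|_E)$ lands inside the sections of $L|_E$, every section of $L$ vanishes at that point. Hence $L$ is not basepoint-free.

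Both directions are essentially formal; the only substantive step is the identification of the Reider curve as elliptic in the first direction.
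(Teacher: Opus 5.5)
Your proof is correct and follows the paper's route. The paper simply specializes Reider's theorem using $K_X=0$, the ampleness of $L$, and the evenness of the intersection form, and it leaves the converse implicit; your restriction argument to $E$ (where $L|_E$ has degree one and hence a base point) fills that in. The one step you assert without justification, namely that the Reider divisor $E$ in case (ii) is irreducible and reduced, follows at once from $L\cdot E=1$: since $L$ is ample, every component of $E$ has positive integral $L$-degree.
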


The purpose of this paper is to establish a Reider-type theorem for syzygies on arbitrary complex abelian surfaces. Viewing basepoint-freeness as Property $N_{-1}$, we extend the preceding statement to Property $N_p$ for every $p\ge 0$. 

\begin{theorem}\label{main thm}
Let $p\geq0$ be an integer, let $(X,L)$ be a polarized complex abelian surface, and write $L^2=2d$.
\begin{itemize}
    \item[(1)] Suppose that $d\geq7$. Then the following conditions are equivalent:
    \begin{enumerate}
        \item[(i)] $L$ satisfies Property $N_0$;
        \item[(ii)] $X$ contains no elliptic curve $E$ such that $L\cdot E\leq2$.
    \end{enumerate}
    The only exception occurs when $L\iso M^{\otimes2}$ for a line bundle $M$ satisfying $M^2=4$ and such that $|M|$ has no fixed component. In this case, condition~(ii) holds, but $L$ does not satisfy Property $N_0$.
    \item[(2)] Suppose that $p\geq1$ and $d\geq(p+2)^2+1$. Then the following conditions are equivalent:
    \begin{enumerate}
        \item[(i)] $L$ satisfies Property $N_p$;
        \item[(ii)] $X$ contains no elliptic curve $E$ such that $L\cdot E\leq p+2$.
    \end{enumerate}
\end{itemize}
\end{theorem}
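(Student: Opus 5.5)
We follow the Jiang--Pareschi approach through the basepoint-freeness threshold. Recall that for a polarized abelian variety $(X,L)$ there is a threshold $\epsilon_1(L)\in(0,1]$. Jiang--Pareschi and Caucci show that $\epsilon_1(L)<\frac{1}{p+2}$ implies Property $N_p$. Conversely, on a surface, an elliptic curve $E$ with $L\cdot E\le p+2$ obstructs $N_p$. Indeed, restricting to $E$ gives a line bundle of degree $\le p+2$. For such a bundle, $N_p$ fails (Green--Lazarsfeld / Eisenbud et al.\ type arguments on the restriction), and this failure propagates to $X$ through the surjection $H^0(L)\to H^0(L|_E)$ and the Koszul cohomology comparison. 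This gives (i)$\Rightarrow$(ii) in both parts. The main content is (ii)$\Rightarrow$(i), and the plan is to prove $\epsilon_1(L)<\frac1{p+2}$, with a separate analysis when $\epsilon_1(L)=\frac1{p+2}$ exactly for $p=0$.

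To bound $\epsilon_1(L)$ we use the Fourier--Mukai description. We have $\epsilon_1(L)\le x$ if and only if $\mathcal I_o\langle xL\rangle$ is IT(0), and via $\FM$ this translates into a cohomological vanishing for $\FM(\mathcal I_o)$ twisted by $-\frac1x\widehat L$ on $\widehat X$. We rephrase this through Bridgeland stability on the abelian surface. Consider the object $\mathcal I_o$, or its transform, along the wall-crossing in the $(\alpha,\beta)$-plane for the tilt stability $\sigma_{\alpha,\beta}$ with $\beta$ moving along $L$. The threshold equals the value of $\beta$ at which $\mathcal I_o\otimes L^{-1}$ (suitably shifted) stops being in the heart, or is destabilized. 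Hence $\epsilon_1(L)\ge\frac1{p+2}$ forces a destabilizing subobject $F\to\mathcal I_o(L)$ along a wall.

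The key step, and the main obstacle, is to classify these destabilizing walls. First we use the Bogomolov inequality and the fact that on abelian surfaces stable objects satisfy $\ch_1^2-2\ch_0\ch_2\ge0$, with equality only for semihomogeneous bundles. With these, the wall's numerical constraints together with $d\ge(p+2)^2+1$ force the destabilizing object to have small rank. A rank-one destabilizer gives a curve $C$ with $\mathcal I_o(L)\to\mathcal O_C(\dots)$ and $L\cdot C\le p+2$, $C^2=0$; this is an elliptic curve, contradicting (ii). Higher-rank destabilizers must be semihomogeneous bundles $F$ with $\ch_1(F)/\rk{F}$ proportional to $L$. When the slope forces $F=M^{\oplus}$-like with $2M\equiv L$, this produces exactly the exceptional case $L\simeq M^{\otimes 2}$, $M^2=4$, in part~(1). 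The bound $d\ge(p+2)^2+1$ should be precisely what rules out higher-rank semihomogeneous walls for $p\ge1$. I expect the delicate part to be the numerical case analysis excluding these, together with handling the boundary case $\epsilon_1=\frac1{p+2}$. In that boundary case we instead verify $N_p$ directly, by a finer vanishing of $H^1$ of the relevant kernel bundle $M_L$ tensor powers. This is the step where the exception for $p=0$ must be checked to genuinely fail $N_0$: for $L=M^{2}$ with $M^2=4$, the multiplication map on $\Sym$ fails because of a quadric relation coming from $|M|$.

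Finally, we assemble the pieces. Absence of small elliptic curves implies there is no destabilizing wall above $\beta=\frac1{p+2}$ unless we are in the exceptional case. This gives $\epsilon_1(L)<\frac1{p+2}$, hence $N_p$ by the Jiang--Pareschi/Caucci criterion.
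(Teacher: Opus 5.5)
There is a genuine gap in part~(2), direction (ii)$\Rightarrow$(i).

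Your plan is to prove $\epsilon_1(l)<\frac1{p+2}$, on the grounds that $d\geq(p+2)^2+1$ ``rules out higher-rank semihomogeneous walls''. It does not. The bound on $d$ only makes the no-wall value $1/\sqrt d$ smaller than $\frac1{p+2}$, and the elliptic-curve hypothesis only disposes of rank-one walls. Walls $0\to\shf G\to\shf F\to\is{q}\to0$ with $\ch(\shf F)=(r,-C,r-1)$, $\ch(\shf G)=(r-1,-C,r)$, $C^2=2r(r-1)$ and $r\geq2$ do occur in this range. They give $\epsilon_1(l)=2r/\bigl(L\cdot C+\sqrt{\overline\Delta}\bigr)$, which can exceed $\frac1{p+2}$ strictly.

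Concretely, let $A$ be a polarization of type $(1,2)$ on an abelian surface with $\NS(X)=\Z[A]$, so $X$ contains no elliptic curves. Let $L=A^{\otimes(p+1)}$ with $p\geq2$. Then $d=2(p+1)^2\geq(p+2)^2+1$. But $h^0(A)=2$, so $A$ is not globally generated, its threshold equals $1$, and by homogeneity $\epsilon_1(l)=\frac1{p+1}>\frac1{p+2}$. For $p=1$, a very general surface of type $(1,12)$ has $\epsilon_1(l)=\frac13$; see also Example~\ref{threshold can be large}.

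Hence the Jiang--Pareschi--Caucci criterion cannot establish $N_p$ throughout the range. Your fallback, a ``finer vanishing'' only in the boundary case $\epsilon_1(l)=\frac1{p+2}$, neither covers the strict inequality nor comes with a mechanism. Incidentally, a higher-rank destabilizer need not have $\ch_1(\shf F)/\rk{\shf F}$ proportional to $l$; that happens only when $(L\cdot C)^2=L^2C^2$.

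The missing idea is to turn the wall itself into a resolution of the kernel bundle. When $\epsilon_1(l)\geq\frac1{p+2}$, the bundles $L\otimes\shf F$ and $L\otimes\shf G$ are $\IT(0)$. Applying $L\widehatstar(-)$ to the wall sequence for $q=0$ gives $0\to\mathcal A\to\mathcal B\to M_L\to0$, with $\mathcal A,\mathcal B$ semihomogeneous of explicitly computable rank and $\delta$. The Schur complex of this presentation reduces $H^1\bigl(\bigwedge^kM_L\otimes L^{\otimes h}\bigr)=0$, for $1\leq k\leq p+1$ and $h\geq1$, to two vanishings:
\begin{itemize}
\item $H^1\bigl(\bigwedge^k\mathcal B\otimes L^{\otimes h}\bigr)=0$. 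This follows from $\mathcal B\langle\frac1{p+1}l\rangle$ being $\IT(0)$, which amounts to ampleness of $\varphi^*L-(p+2)M'$ on Mukai's isogeny cover $\varphi_*\mathcal O_{X'}(M')\simeq\shf F^\vee$, using $L\cdot C\leq(p+2)r+\frac{(r-1)d}{p+2}$.
\item $H^2\bigl(\mathcal A\otimes\bigwedge^{k-1}\mathcal B\otimes L^{\otimes h}\bigr)=0$. This follows from semihomogeneity once the bundle has positive $L$-slope, which is a delicate numerical estimate. In the three residual cases $(p,d,r,L\cdot C)=(1,10,2,9),(1,12,2,10),(2,17,2,12)$ it follows instead from the index lemma, since $\delta^2<0$.
\end{itemize}
Green's criterion then gives $N_p$.

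Your treatment of part~(1) is sound in outline: there $\epsilon_1(l)\leq\frac12$ for $d\geq7$, with equality exactly when $L\simeq M^{\otimes2}$ and $M^2=4$. You would still need a genuine argument, e.g.\ Ohbuchi's lemma, that $M^{\otimes2}$ is not normally generated, and a check that this $L$ satisfies (ii). The direction (i)$\Rightarrow$(ii) should be quoted from K\"uronya--Lozovanu rather than derived from an unspecified Koszul comparison.
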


The implication (i)$\Rightarrow$(ii) in Theorem~\ref{main thm} was proved by K\"{u}ronya--Lozovanu \cite[Theorem~4.1]{KL19} when $d\geq2p+3$. Thus our main contribution is the converse implication (ii)$\Rightarrow$(i).

Using infinitesimal Newton--Okounkov bodies, K\"{u}ronya and Lozovanu \cite{KL19} first established such a Reider-type result under the assumption $d\geq\frac{5}{2}(p+2)^2$. Ito \cite{Ito18} improved this bound to $d\geq2(p+2)^2+1$, and later proved Property $N_p$ for a general polarized abelian surface of type $(1, d)$ when $d\geq(p+2)^2+p+3$ \cite{Ito23}. Under the additional ``almost Picard-rank-one'' hypothesis that $L^2$ divides $L\cdot D$ for every curve $D\subseteq X$, Rojas \cite{Rojas22} established Property $N_0$ for $d\geq7$ and Property $N_p$ for $d\geq(p+2)^2+1$ when $p\geq1$. Theorem~\ref{main thm} removes these additional generality and divisibility assumptions. Moreover, its numerical bounds are optimal for $p=0,1$; see Example~\ref{sharpness for p=0, 1}.

A central tool in our proof is the \emph{basepoint-freeness threshold} $\epsilon_1(l)$ introduced by Jiang--Pareschi \cite[Section~8]{JP20}; see Section~\ref{sec:cohrkfunctionandepsilon1} for a brief review. By work of Jiang--Pareschi and Caucci, the inequality $\epsilon_1(l)<1/(p+2)$ implies that $L$ satisfies Property $N_p$. The following theorem contains our main technical estimates.

\begin{theorem}\label{main thm2}
Let $(X,L)$ be a polarized abelian surface, write $L^2=2d$, and let $p\geq0$ be an integer. Suppose that $X$ contains no curve $E$ with $E^2=0$ and $L\cdot E\leq p+2$. Then:
\begin{itemize}
    \item[(1)] If $d>2(p+2)^2$, then $\epsilon_1(l)<\tfrac1{p+2}$.
    \item[(2)] If $d=2(p+2)^2$, then $\epsilon_1(l)<\tfrac1{p+2}$, unless $L\iso M^{\otimes(p+2)}$ for some ample line bundle $M$ with $M^2=4$, in which case $\epsilon_1(l)=\tfrac1{p+2}$.
   \item[(3)] If $p\geq1$ and $d\geq(p+2)^2+1$, then $L$ satisfies Property $N_p$.
\end{itemize}
\end{theorem}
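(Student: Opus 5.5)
We plan to work in the Jiang--Pareschi framework. The key input is the cohomological rank function of the ideal sheaf of a point twisted by $l$. It is well known that $\epsilon_1(l)\le x$ exactly when $h^1_{\is{o}}(xl)=0$, i.e.\ when $\is{o}\langle xl\rangle$ is IT(0). By the Fourier--Mukai description we intend to translate this into a stability statement. Concretely, for rational $x=a/b$ we pass to an isogeny (multiplication by $b$) so that $xl$ becomes an honest class, and study the transform $\FM(\is{o}\otimes L^{\otimes ?})$. The vanishing of $h^1$ should then amount to the nonexistence of a destabilizing subobject of $\is{o}$ with respect to a Bridgeland stability condition $\sigma_{\alpha,\beta}$ with $\beta=xl$ near the boundary $\alpha\to0$, along the wall where $\is{o}$ and $\ox(-xL)$-type objects have equal phase.

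So the first step is to reduce the statement $\epsilon_1(l)<1/(p+2)$ to the following: for $\beta=\tfrac1{p+2}l$ (or slightly less), the object $\is{o}$ is not destabilized along the relevant wall in the $(\alpha,\beta)$-plane. The second step is to classify the potential walls for $\is{o}$, which has $\ch=(1,0,-1)$. On an abelian surface, the Bogomolov inequality and the fact that all stable objects have $\ch_1^2-2\ch_0\ch_2\ge0$ (semihomogeneous bundles give equality) restrict destabilizing subobjects. A destabilizing sequence $A\to\is{o}\to B$ forces $\ch_1(A)=-D$ with $D$ effective, and, when $\ch_0(A)=r$, it forces a numerical inequality combining $D\cdot L$, $D^2$ and $r$. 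We will write down the wall equation and show the following. Any wall reaching $\beta=\tfrac1{p+2}$ requires a class $D$ with $(L\cdot D)^2-L^2D^2$ small relative to $L\cdot D$. Using the Hodge index theorem together with $d>2(p+2)^2$, this forces $D^2=0$ and $L\cdot D\le p+2$, which contradicts the hypothesis. This gives (1).

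For (2), the boundary case $d=2(p+2)^2$ leaves exactly one numerical solution, namely $D$ numerically $L/(p+2)$ with rank-$r$ semihomogeneous subobjects whose slope equals $l/(p+2)$ and $M^2=4$. We then check that equality $\epsilon_1=1/(p+2)$ occurs precisely when $L$ is actually divisible, $L\iso M^{\otimes(p+2)}$. This uses the fact that for $M$ of type $(1,2)$ the point is a base point of $|M|$ translates, i.e.\ $\epsilon_1(m)=1$, so that $\epsilon_1(l)=\epsilon_1(m)/(p+2)$.

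For (3), the threshold criterion alone is too weak below $2(p+2)^2$. We will instead use the refined criterion, which holds for $p\ge1$: Property $N_p$ follows from IT(0)-type vanishing of $\is{o}\langle \tfrac1{p+2}l\rangle$ up to an allowed defect (a Caucci-type argument via $M_L^{\otimes p+1}$ or via the Koszul cohomology of translates). We then show that any wall in the range $(p+2)^2+1\le d\le 2(p+2)^2$ comes from a curve $D$ with $D^2=0$, or from a semihomogeneous destabilizer whose contribution does not obstruct $N_p$.

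We expect the main obstacle to be this last step, namely ruling out the intermediate walls given by higher-rank semihomogeneous objects. The difficulty is that the numerical bound is only about half of that in (1). Our first attempt will be a case analysis on the rank $r$ of the destabilizer combined with the Hodge index inequality $(L\cdot D)^2\ge 2d\,D^2$.
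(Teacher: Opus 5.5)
\textbf{Parts (1) and (2).} These follow essentially the paper's route: the Lahoz--Rojas computation of $\epsilon_1(l)$ from the first actual wall of $\is q$. However, the step where Hodge index ``forces $D^2=0$'' is not right as stated. What makes the wall analysis work without Picard-rank assumptions is the following. Since $\Delta(\is q)=2$ and $\Delta$ is a support form on the whole component, an actual wall has exactly two stable factors, both of discriminant zero. This pins down $\ch(\shf F)=(r,-C,r-1)$ with $C^2=2r(r-1)$, and gives $\epsilon_1(l)=2r/(L\cdot C+\sqrt{\overline\Delta})$.

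For $r\geq2$ one has $C^2>0$, not $0$. Combining the formula with Hodge index shows that $\epsilon_1(l)\geq\frac1{p+2}$ forces $r\le d/(d-(p+2)^2)$. Only this bound excludes $r\geq2$ when $d>2(p+2)^2$. What remains are rank-one walls, where $C^2=0$ and $\epsilon_1(l)=1/(L\cdot C)$. At $d=2(p+2)^2$, equality forces $r=2$, $\overline\Delta=0$ and $L\equiv(p+2)C$. Divisibility of $\Pic^0(X)$ then gives $L\iso M^{\otimes(p+2)}$ directly, with no separate divisibility check needed.

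\textbf{Part (3).} This part has a genuine gap. You propose either to show, by a case analysis on $r$ with the Hodge index theorem, that walls in the range $(p+2)^2<d\le2(p+2)^2$ are elliptic or harmless, or to invoke an unspecified criterion ``up to an allowed defect''. But higher-rank walls with $\epsilon_1(l)\geq\frac1{p+2}$ really occur under the hypotheses.

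For a concrete instance, take a general quotient $X=(E_1\times E_2)/(\Z/2)$ with $E_1\cdot E_2=2$, let $L=2E_1+3E_2$ (so $d=12$), and take $p=1$. Every elliptic curve has $L$-degree at least $4$. Yet the first wall is a rank-two semihomogeneous wall with $C=E_1+E_2$, giving $\epsilon_1(l)=\frac13$. A very general surface of type $(1,12)$ also has $\epsilon_1(l)=\frac13$.

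Consequently, no numerical case analysis can exclude these walls. Moreover, any criterion requiring $\is 0\langle\frac1{p+2}l\rangle$ or $M_L\langle\frac1{p+1}l\rangle$ to be $\IT(0)$ is unavailable, since each condition is equivalent to $\epsilon_1(l)<\frac1{p+2}$. In particular, Caucci's tensor-power argument breaks down.

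The missing idea is to \emph{use} the wall instead of excluding it:
\begin{itemize}
\item The destabilizing sequence $0\to\shf G\to\shf F\to\is 0\to0$ consists of simple semihomogeneous bundles. Applying $L\widehatstar(\,\cdot\,)$ yields $0\to\mathcal A\to\mathcal B\to M_L\to0$ with $\mathcal A,\mathcal B$ semihomogeneous.
\item The Schur complex of this sequence resolves $\bigwedge^kM_L$. So the vanishing $H^1(\bigwedge^kM_L\otimes L^{\otimes h})=0$, for $1\le k\le p+1$ and $h\geq1$, reduces to two vanishings: $H^1(\bigwedge^k\mathcal B\otimes L^{\otimes h})=0$ and $H^2(\mathcal A\otimes\bigwedge^{k-1}\mathcal B\otimes L^{\otimes h})=0$.
\item The first vanishing holds because $\mathcal B\langle\frac1{p+1}l\rangle$ is $\IT(0)$. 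Equivalently, $\varphi^*L-(p+2)M$ is ample, where $\varphi_*\mathcal O_{X'}(M)\iso\shf F^\vee$.
\item The second vanishing holds because that bundle is semihomogeneous, hence semistable, and has positive $L$-degree by an explicit numerical estimate. There are three exceptional tuples $(p,d,r,L\cdot C)$; there one has $\delta^2<0$ instead, and the index lemma for semihomogeneous bundles applies.
\end{itemize}
The asymmetry between $\mathcal A$ and $\mathcal B$ is forced: $\mathcal A\langle\frac1{p+1}l\rangle$ is not $\IT(0)$. So one can only use exterior powers of $\mathcal B$, together with an $H^2$-vanishing for the $\mathcal A$-term.
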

Part~(1) is due to Ito; we include it for completeness. In the range $(p+2)^2<d\leq2(p+2)^2$, however, it can happen that $\epsilon_1(l)\geq1/(p+2)$; see Example~\ref{threshold can be large}. Consequently, the criterion of Jiang--Pareschi and Caucci does not apply throughout this range, and a different argument is required.

For primitive polarizations, the bound on $d$ ensuring $\epsilon_1(l)<1/(p+2)$ can be improved further.

\begin{theorem}\label{main thm3}
Let $(X,L)$ be a polarized abelian surface of type $(1,d)$, and let $p\geq1$. Suppose that $X$ contains no elliptic curve $E$ with $L\cdot E\leq p+2$. If $d\geq2(p+2)^2-2(p+2)+1$, then $\epsilon_1(l)<\tfrac1{p+2}$.
\end{theorem}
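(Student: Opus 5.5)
We argue by contradiction, following the strategy of Theorem~\ref{main thm2}. Set $q=p+2$ and suppose $\epsilon_1(l)\geq 1/q$. By the Jiang--Pareschi description of $\epsilon_1$ via the cohomological rank function of $\is{o}\langle tl\rangle$, the condition $\epsilon_1(l)\ge 1/q$ means $h^1_{\is o}(xl)\neq 0$ for $x=1/q$. After passing to the isogeny $\mu_q$, this is equivalent to the non-vanishing of $H^1$ of a twist of $\mu_q^*\is{o}\otimes L$ by a generic topologically trivial line bundle. Equivalently, by Fourier--Mukai duality, the transform of $\is o$ fails to be IT$(0)$ with respect to the $\mathbb Q$-twist of slope $1/q$. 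Following the paper's Bridgeland-stability approach, we translate this into the existence of a destabilizing object. Concretely, there is a wall for $\ch(\is o)=(1,0,-1)$ in the $(\alpha,\beta)$-plane attached to $l$, lying at or beyond $\beta=-1/q$. The destabilizing subobject or quotient is then a semihomogeneous bundle, or a sheaf of the form $\is Z\otimes N$, with Chern character $(r,c,\chi)$. Here $c\in\NS(X)$, and the Bogomolov-type inequality $c^2\ge 2r\chi$ holds (equality for semihomogeneous bundles).

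The key step is a numerical analysis of this wall. Write $c\cdot l=a$ and $c^2=2e$. The wall condition at slope $1/q$ gives an inequality of the form
\[
\frac{a}{r}\geq \frac{2d}{q}-\bigl(\text{small correction}\bigr),
\]
together with the Hodge index bound $a^2\ge 2d\cdot 2e$. The largeness of $d$ then forces $c$ to be nearly proportional to $l$ with $c/r\approx l/q$. In the non-primitive setting, the extremal case is $c=r\,m$ with $l=q\,m$ and $m^2=4$; this is exactly the exception in Theorem~\ref{main thm2}(2). For type $(1,d)$, $l$ is primitive, so $q\,c-r\,l\neq 0$. Set $D=q c-r l$. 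Then $D\cdot l$ and $D^2$ are integers, and the Hodge index theorem gives $D^2\le (D\cdot l)^2/(2d)$. Combining this with $D^2\in 2\Z$ and the wall inequalities, we obtain a lower bound of the form $2d\le 2q^2-2q$, which contradicts $d\ge 2q^2-2q+1$. This holds unless $D$ is (up to the lattice) a class with $D^2=0$ and small $D\cdot l$.

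The remaining case, $D^2=0$, is the main obstacle. There one must produce an elliptic curve $E$ with $L\cdot E\le q$ from the numerical class. An isotropic class $\Theta$ on an abelian surface with $\Theta\cdot l>0$ is a positive multiple of the class of an elliptic curve (Bauer--Szemberg style argument). One then bounds $L\cdot E$ using the wall inequality, aiming for $L\cdot E\le q$ via the integrality of $a$ and $r$ and the bound $r\le q$, which comes from the rank of the relevant semihomogeneous bundle being controlled by the slope $1/q$. This contradicts the hypothesis. The delicate point is handling the boundary cases where the inequalities become equalities: primitivity of $l$ is what excludes $c/r=l/q$ exactly, and this improves the bound from $2q^2$ to $2q^2-2q+1$. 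We expect a careful case split on $r\in\{1,\dots,q\}$, comparing $\lceil\cdot\rceil$ of the relevant rational bounds, to be necessary.
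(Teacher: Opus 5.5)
There is a genuine gap. Your outline has the right ingredients: a wall for $\is q$, the Hodge index theorem plus integrality, and an isotropic class that yields an elliptic curve. But the numerical input that makes the argument close is never established, so the pivotal sentence ``we obtain a lower bound of the form $2d\le 2q^2-2q$'' is unsupported.

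A Bogomolov inequality $c^2\ge 2r\chi$ for a factor that might be ``$\is Z\otimes N$'' is not enough. To turn $\epsilon_1(l)\ge 1/q$ into a constraint, you must evaluate the quadratic $P_{\shf G}(s)=\ch_2^s(\shf G)$ at $s=-1/q$, and that requires $\ch_2$ exactly. What is needed is the discriminant-zero description of the wall (Proposition~\ref{prop:wallIq}). First exclude the no-wall case, where $\epsilon_1=1/\sqrt d<1/q$, and the rank-one case (Corollary~\ref{cor:rk1true}). Then $\ch(\shf F)=(r,-C,r-1)$, $\ch(\shf G)=(r-1,-C,r)$, $C^2=2r(r-1)$, and $\epsilon_1(l)=-p^+_{\shf Q}$ is the larger root of $P_{\shf G}$ (Proposition~\ref{prop:1stwall}). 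Consequently $\epsilon_1(l)\ge 1/q$ gives $L\cdot C\le qr+\frac{(r-1)d}{q}$, and the vertex of $P_{\shf G}$ gives $L\cdot C\ge \frac{2(r-1)d}{q}$. Together these yield $r\le d/(d-q^2)$ (Lemma~\ref{lem:boundforr}). In the range $2q^2-2q+1\le d<2q^2$ this forces $r=2$ for $p\ge 2$, and $r\le 3$ for $p=1$, with $r=3$ only when $d=13$. Your bound ``$r\le q$ because the rank is controlled by the slope'' is unjustified and too weak; without $r=2$ your proposed case split never closes.

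With $r=2$ the argument is short. Integrality gives $L\cdot C\le 2q+\lfloor d/q\rfloor$, which equals $4q-2$ for $d\le 2q^2-q-1$ and $4q-1$ for $d\ge 2q^2-q$. Comparing with $(L\cdot C)^2\ge L^2C^2=8d$ rules out every degree except $d=2q^2-q$ with $L\cdot C=4q-1$. There $E=qC-L$ satisfies $E^2=0$ and $L\cdot E=q$, which contradicts the hypothesis. For $p=1$, the case $(d,r)=(13,3)$ fails Hodge, since $17^2<312$.

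The relevant class reflects $C/(r-1)\approx L/q$, i.e.\ $\delta(\shf G)\approx -l/q$. Your extremal configuration ``$c=rm$, $l=qm$'' is inconsistent with $C^2=2r(r-1)$ if $c$ is $c_1$ of the rank-$r$ subobject. The actual exception is $r=2$, $C\equiv M$, $L\equiv qM$.

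Primitivity also does not ``improve the bound from $2q^2$ to $2q^2-2q+1$''. It is used only in Corollary~\ref{Ito18}, which handles all $d\ge 2q^2$, to exclude the exception $L\simeq M^{\otimes q}$ with $M^2=4$ at $d=2q^2$. The range $2q^2-2q+1\le d\le 2q^2-1$ is handled by integrality of $L\cdot C$ alone, with no primitivity needed. The paper additionally applies Corollary~\ref{Ito18} with $p-1$ to get $\epsilon_1(l)<1/(p+1)$, and hence the exact value of $L\cdot C$ via Lemma~\ref{L.C}, but the upper bound already suffices.
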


Gross--Popescu \cite[Conjecture~(a)]{GP98} conjectured that every very ample line bundle $L$ of type $(1,d)$ on an abelian surface has homogeneous ideal generated by quadrics and cubics when $d\geq9$. Building on work of Lazarsfeld \cite{Lazarsfeld90} and Fuentes Garc\'ia \cite{Garcia04} involving representations of the theta group, Agostini \cite{Agostini17} proved the conjecture for all $d\geq7$. As a byproduct of our method, we obtain a theta-group-free proof of Agostini's theorem. Theorem~\ref{main thm} also shows that, when $d\geq10$, cubic generators are necessary whenever $X$ contains an elliptic curve $E$ with $L\cdot E\leq3$.

\begin{proposition}\label{GP Conjecture}
Let $(X, L)$ be a polarized abelian surface of type $(1, d)$ such that
$L$ is very ample and $d\ge 7$. Then $L$ is projectively normal, and its
homogeneous ideal is generated by quadrics and cubics. Moreover, if
$d\geq 10$, then the homogeneous ideal is generated by quadrics if and
only if the abelian surface does not contain any elliptic curve $E$ with
$L\cdot E\leq 3$.
\end{proposition}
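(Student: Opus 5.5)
The plan is to derive Proposition~\ref{GP Conjecture} from Theorem~\ref{main thm}, together with the standard reformulations of Properties $N_0$ and $N_1$ and a separate treatment of cubic generation.

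\textbf{Step 1: reduce to absence of elliptic curves of degree $\le 2$.} Since $L$ is very ample, $X$ contains no elliptic curve $E$ with $L\cdot E\le 2$. Indeed, such an $E$ would be embedded as a curve of degree at most $2$, and any curve of degree at most $2$ is rational, which is impossible for an elliptic curve.

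\textbf{Step 2: projective normality.} For type $(1,d)$ the bundle $L$ is primitive, so the exceptional case $L\iso M^{\otimes 2}$ in Theorem~\ref{main thm}(1) cannot occur. With $d\ge7$, Theorem~\ref{main thm}(1) then gives Property $N_0$, that is, projective normality.

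\textbf{Step 3: generation in degrees $2$ and $3$.} Here I would use the criterion that the ideal is generated in degree $\le 3$ precisely when $K_{p,q}(X,L)=0$ for $p=1$ and $q\ge 3$. Via the Koszul description, this amounts to surjectivity of the multiplication maps $H^0(M_L^{\otimes 2}\otimes L^{k})\otimes H^0(L)\to\cdots$ for $k\ge2$, or more precisely to the vanishing of $H^1(\wedge^2 M_L\otimes L^{k})$ for $k\ge 2$. The basepoint-freeness threshold controls this. The Jiang--Pareschi/Caucci argument with $L$ replaced by the pair $(L,L^2)$ shows that these vanishings hold as soon as $\epsilon_1(l)<\tfrac12$, because $\epsilon_1(2l)=\tfrac12\epsilon_1(l)$ yields the needed IT(0)-type vanishing for $M_L\otimes L^{\otimes k}$-twists with $k\ge2$. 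Since $L$ is basepoint free, $\epsilon_1(l)<1$ already gives a weaker but sufficient statement in degree $\ge 3$; I expect $\epsilon_1(l)\le 1/2$, or merely $<1$ combined with $N_0$, to suffice. This is the step I expect to be the main obstacle: making the cubic-generation criterion precise in terms of $\epsilon_1$, and bounding $\epsilon_1(l)$ using only very ampleness and $d\ge7$. I would try first the estimate of Theorem~\ref{main thm2}-type with $p=0$ and a twist by $L$, where the relevant threshold becomes $\epsilon_1(l)<1$ for regularity of $I_X$ in degree $\le 3$. That inequality follows from basepoint-freeness by Jiang--Pareschi.

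\textbf{Step 4: quadric generation for $d\ge10$.} Quadric generation is exactly Property $N_1$. For $p=1$ the bound $d\ge (1+2)^2+1=10$ is the hypothesis of Theorem~\ref{main thm}(2). That theorem says $N_1$ holds if and only if there is no elliptic curve $E$ with $L\cdot E\le 3$, which is the stated equivalence.
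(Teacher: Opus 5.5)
\textbf{Steps 1, 2 and 4 are fine}, and they match the paper. The paper obtains projective normality from Proposition~\ref{p=0} and Theorem~\ref{thm:caucci}, which is how Theorem~\ref{main thm}(1) is proved anyway. \textbf{The gap is Step 3.} You correctly reduce cubic generation, given $N_0$, to
\[
H^1\big(X,\bigwedge^2M_L\otimes L^{\otimes k}\big)=0 \qquad\text{for } k\ge2,
\]
via $K_{1,q}\simeq H^1(X,\bigwedge^2M_L\otimes L^{\otimes(q-1)})$. But you never supply the two ingredients that prove this vanishing, and you call obtaining them ``the main obstacle.''

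\emph{The input.} What is needed is the strict inequality $\epsilon_1(l)<\tfrac12$, and it is already available. A type $(1,d)$ polarization is primitive, so the exceptional case $L\simeq M^{\otimes2}$ cannot occur. Combined with your Step 1, Proposition~\ref{p=0} therefore gives $\epsilon_1(l)<\tfrac12$ directly.

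\emph{The mechanism.} By \cite[Theorem~D]{JP20}, $M_L\langle xl\rangle$ is $\IT(0)$ for every rational $x>\epsilon_1(l)/(1-\epsilon_1(l))$. When $\epsilon_1(l)<\tfrac12$ you may choose such an $x<1$. Then for every $h\ge2$,
\[
M_L^{\otimes2}\otimes L^{\otimes h}=\bigl(M_L\langle xl\rangle\bigr)^{\otimes2}\otimes\ox\langle(h-2x)l\rangle
\]
is $\IT(0)$ by Proposition~\ref{positivity of tensor}. In characteristic zero, $\bigwedge^2M_L\otimes L^{\otimes h}$ is a direct summand, so its $H^1$ vanishes. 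The homogeneity $\epsilon_1(2l)=\tfrac12\epsilon_1(l)$ that you invoke plays no role here: the relevant bundle is $M_L$, not $M_{L^{2}}$.

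\textbf{Your fallback does not work.} You suggest that $\epsilon_1(l)<1$ (that is, basepoint-freeness) together with $N_0$ should suffice. There are two problems.
\begin{itemize}
\item From $\epsilon_1(l)<1$ alone, the ratio $\epsilon_1(l)/(1-\epsilon_1(l))$ may be $\ge1$. The argument above then gives vanishing only for $h>2\epsilon_1(l)/(1-\epsilon_1(l))$. It can miss $h=2$, which is exactly the group $K_{1,3}$ governing quartic generators.
\item Castelnuovo--Mumford regularity combined with $N_0$ gives generation only in degree $\le4$. Indeed, $3$-regularity of $\mathcal I_X$ would require $H^3(\mathcal I_X)\simeq H^2(X,\ox)=0$, which fails on an abelian surface.
\end{itemize}
Similarly, $\epsilon_1(l)\le\tfrac12$ would not suffice: at equality you cannot choose $x<1$. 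So the strictness supplied by primitivity through Proposition~\ref{p=0} is essential.
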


\subsection{Outline of the proofs}

We give an overview of the proofs of Theorems~\ref{main thm2} and~\ref{main thm3}.

The first input is the interpretation, due to Lahoz--Rojas \cite{LR23}, of $\epsilon_1(l)$ in terms of the Harder--Narasimhan filtration of the ideal sheaf $\is{q}$, for $q\in X$, with respect to the Bridgeland stability conditions $\sigma_{s,t}$ for $s<0$ and $0<t\ll1$; see Section~\ref{sec:stabonabsurf} for their definition. If $\is{q}$ crosses no actual wall in the region $s<0$, then $\epsilon_1(l)=1/\sqrt d$. Otherwise, let $\shf F$ and $\shf Q$ denote the destabilizing subobject and quotient on the actual wall. One of our key observations is that both $\shf F$ and $\shf Q$ have discriminant zero, where we use the ordinary discriminant rather than the polarized one, and are stable throughout a suitable slice of the Bridgeland stability manifold. Consequently, the Harder--Narasimhan filtration of $\is{q}$ for the weak stability function $\nu_{s,0}$ has two graded pieces, and $\epsilon_1(l)$ can be expressed explicitly in terms of their Chern characters; see Proposition ~\ref{prop:1stwall}. The  formula enables us to construct a polarized abelian surface with irrational $\epsilon_1(l)$; see Example \ref{irrational threshold}.

The object $\shf F$ is always a vector bundle. We call $r=\rk{\shf {F}}$ the \emph{rank of the wall}. If $r=1$, then $\shf F\iso\mathcal O_X(-C)$ for a divisor $C$ with $C^2=0$, and
\[
\epsilon_1(l)=\frac1{L\cdot C}.
\]
The assumed absence of elliptic curves of small $L$-degree therefore immediately yields $\epsilon_1(l)<1/(p+2)$.

Consider now the higher-rank case $r\geq2$. The wall gives rise to a short exact sequence of coherent sheaves
\begin{equation}\label{eq:1.1}
\ses{\shf G}{\shf F}{\is{q}},
\end{equation}
where $\shf F$ and $\shf G$ are simple semihomogeneous vector bundles and $C=-\ch_1(\shf F)$ is effective.

If, nevertheless, $\epsilon_1(l)\geq1/(p+2)$, then
\[
L\cdot C\leq(p+2)r+\frac{(r-1)d}{p+2}.
\]
Under the stronger assumption
\[
\tfrac1{p+1}>\epsilon_1(l)\geq\tfrac1{p+2},
\]
this inequality becomes the numerical identity
\begin{equation}\label{eq:1.2}
L\cdot C=(p+2)r+\left\lfloor\frac{(r-1)d}{p+2}\right\rfloor.
\end{equation}

Combining \eqref{eq:1.2} with the Hodge index theorem and arguing by contradiction and induction on $p$ yields the required threshold estimate for $p=0$, apart from the stated exception, whenever $d>2(p+2)^2$, and, for primitive polarizations, whenever $d>2(p+2)^2-2(p+2)$. This proves Theorem~\ref{main thm3}.

For arbitrary polarizations with $d\leq2(p+2)^2$, however, the numerical restriction alone does not force $\epsilon_1(l)<1/(p+2)$. Indeed, the reverse inequality can occur; see Example~\ref{threshold can be large}. Thus the basepoint-freeness-threshold criterion is not sufficient by itself to prove Theorem~\ref{main thm2}(3).

To treat small $d$ uniformly, the next key step is to apply the Fourier--Mukai transform to \eqref{eq:1.1} and then pull back by the polarization isogeny $\varphi_l$, more precisely, to apply the skew Pontryagin product $L\widehatstar \bullet$; see \S ~\ref{subsect:spp}. This produces a short exact sequence 
\begin{equation}\label{eq:1.3}
0\longrightarrow \mathcal A\longrightarrow \mathcal B
\longrightarrow M_L\longrightarrow 0,
\end{equation}
where $\mathcal A$ and $\mathcal B$ are certain semihomogeneous vector bundles and $M_L$ is the kernel bundle of $L$. To prove Theorem~\ref{main thm2}(3), we will apply Proposition \ref{prop:green}. To this end, for every integer $1\le k\le p+1$,  consider the Schur complex associated with \eqref{eq:1.3}, which is indeed exact,
\begin{equation*}\label{eq:intro-schur-complex}
\begin{split}
0\longrightarrow S^k\mathcal A
&\longrightarrow S^{k-1}\mathcal A\otimes\mathcal B
\longrightarrow S^{k-2}\mathcal A\otimes\bigwedge^2\mathcal B
\longrightarrow\cdots\\
&\longrightarrow\mathcal A\otimes\bigwedge^{k-1}\mathcal B
\longrightarrow\bigwedge^k\mathcal B
\longrightarrow\bigwedge^kM_L\longrightarrow0,
\end{split}
\end{equation*}
where $S^k$ and $\bigwedge^k$ denote the $k$-th symmetric and exterior powers, respectively. It suffices to prove the following two vanishings, for $1\leq k\leq p+1$ and $h\geq 1$:
\begin{equation}\label{eq:intro-vanishings}
H^1\big(X,\bigwedge^k\mathcal B\otimes L^{\otimes h}\big)=0
\quad\text{and}\quad
H^2\big(X,\mathcal A\otimes\bigwedge^{k-1}\mathcal B\otimes L^{\otimes h}\big)=0.
\end{equation}

Assuming now that $\epsilon_1(l)\ge \frac{1}{p+2}$, Proposition \ref{positivity of B term} shows that the $\mathbb Q$-twisted sheaf $\mathcal B\langle\frac{1}{p+1}l\rangle$ is $\IT(0)$, which gives the first vanishing in \eqref{eq:intro-vanishings}.  In contrast, $\mathcal A\langle\frac{1}{p+1}l\rangle$ is not $\IT(0)$, so the analogous argument does not yield the $H^2$-vanishing.

The second vanishing requires a different observation: the bundles $\mathcal A\otimes\bigwedge^{k-1}\mathcal B\otimes L^{\otimes h}$ are semihomogeneous, although not necessarily simple. Outside three exceptional numerical cases, a uniform estimate gives a positive $L$-degree;  semistability and Serre duality then give the desired $H^2$-vanishing. The exceptional cases are
\begin{equation*}\label{eq:intro-exceptions}
(p,d,r,L\cdot C)=(1,10,2,9),\quad(1,12,2,10),\quad(2,17,2,12).
\end{equation*}
In each one, the normalized first Chern class has a negative square, so the index lemma for semihomogeneous vector bundles (Lemma~\ref{lem:index}) again gives the $H^2$-vanishing.

The restriction $p\geq 1$ matters at the bound $d\geq (p+2)^2+1$. Indeed, if $A$ is a general polarization of type $(1,2)$ and $L=A^{\otimes 2}$, then $d=8$, the surface $X$ contains no elliptic curve, and $L$ is not projectively normal; see \cite[Lemma~6]{Ohbuchi93}. Thus the unrestricted statement for arbitrary nonprimitive polarizations already fails when $p=0$.

While this paper was being written, we learned that Atsushi Ito had independently discovered the irrational basepoint-freeness threshold appearing in Example~\ref{irrational threshold}.

\vspace{5pt}

\noindent\textbf{Organization of the paper.} Section~\ref{sec:prelim} collects the necessary background on Property $N_p$, cohomological rank functions, $\mathbb{Q}$-twisted sheaves, semihomogeneous bundles, and Fourier--Mukai transforms. Section~\ref{sec:Bridgeland} reviews the relevant Bridgeland stability conditions and establishes the stability properties of objects with zero discriminant. In Section~\ref{sec:first-wall}, we determine the actual wall of $\is{q}$ and its stable factors. Section~\ref{sec:numerical-wall} derives some numerical restrictions and proves the threshold estimates, including Theorem~\ref{main thm3} and the case $p=0$. Section~\ref{sec:mlcompute} constructs a Fourier--Mukai presentation of $M_L$ and establishes two cohomology vanishings required in the proof. Section~\ref{sec:mainproof} applies the Schur complex to complete the proofs of the main results. Finally, Section~\ref{sec:examples} presents examples and complementary results, including an irrational basepoint-freeness threshold and an upper bound for the threshold.

\vspace{5pt}

\noindent\textbf{Conventions.} Throughout the paper, we work over $\mathbb C$. We write $\Db(X)=\Db(\Coh(X))$ for the bounded derived category of coherent sheaves on $X$. For a coherent sheaf $\shf F$ on $X$, the notation $H^i(X,\shf F)$ denotes sheaf cohomology and $\ch_i(\shf{F})$ denotes its $i$-th Chern character. For  an object $\shf F\in\Db(X)$, the notation $\mathcal{H}^i(\shf F)$ denotes the $i$-th cohomology sheaf. By an elliptic curve on an abelian surface, we mean a smooth curve of genus one. We identify divisors with their numerical or cohomological classes when no confusion is likely. 
\vspace{5pt}

\noindent\textbf{AI Disclosure.} The project was initiated in late 2024, when large language models (LLMs) were not yet substantially involved in mathematical research. The two main conceptual innovations, namely, the discriminant argument that avoids the `almost Picard number one' assumption in \cite{LR23}, and the presentation of the kernel bundle via the destabilizing sequences of ideal sheaves, were developed entirely by the authors, without any LLM assistance. Following the release of ChatGPT 5.6 Pro in July 2026, we used it to assist with some of the computations in Section~\ref{sec:mlcompute}, enabling us to identify the three exceptional cases rapidly. The use of LLM accelerated the final stages of the project.

\vspace{5pt}

\noindent\textbf{Acknowledgements.} During the preparation of the article, we benefited a lot from discussions and conversations with Lawrence Ein, Mart\'i Lahoz, Robert Lazarsfeld, Andr\'es Rojas and Hao Max Sun. C.~L. was supported by the Royal Society URF/R1/201129 and 251025, \emph{Stability Conditions and Applications in Algebraic Geometry}. L.~S. was supported by NSFC grant No.~12471043 and  Guangdong Basic and Applied Basic Research Foundation No.~ 2025A1515012258.

\section{Preliminaries}\label{sec:prelim}
We collect the definitions and facts used in the sequel.

\subsection{Property \texorpdfstring{$N_p$}{Np} and the kernel bundle}

Let $X$ be a projective variety embedded in $\mathbb P^N$ by the complete linear system $|L|$. Put
\[
S=\Sym\paren{H^0(X,L)}, \qquad R(X,L)=\bigoplus_{h\geq 0}H^0(X,L^{\otimes h}).
\]
The section ring $R(X,L)$ is naturally a graded $S$-module. The line bundle $L$ is said to  satisfy  \textit{Property $N_0$} if the natural map $S\to R(X,L)$ is surjective. For an integer $p\geq 1$, it is said to satisfy \textit{Property $N_p$} if it satisfies Property $N_0$ and the first $p$ steps in the minimal graded free resolution of $R(X,L)$ over $S$ are linear; see
\cite[Section~1.8.D]{LazarsfeldI}.

Let $M_L$ be the kernel bundle defined by the evaluation map
\begin{equation*}\label{eq:evaluation}
0\longrightarrow M_L\longrightarrow H^0(X,L)\otimes\ox
\longrightarrow L\longrightarrow 0.
\end{equation*}
We shall use the following cohomological criterion for Property $N_p$; see, for example, \cite[Proposition 2.5]{AN10}.
\begin{proposition}\label{prop:green}
Let $L$ be a globally generated line bundle on a projective
variety $X$ over a field of characteristic zero. If
\[
H^1\big(X,\bigwedge^kM_L\otimes L^{\otimes h}\big)=0
\]
for every $1\leq k\leq p+1$ and $h\geq 1$, then $L$ satisfies Property
$N_p$. If $H^1(X, L^{\otimes i})=0$ for all $i\ge 1$, then the converse is also true. 
\end{proposition}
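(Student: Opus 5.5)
This is the standard Green criterion, and I would deduce it from the Koszul cohomology description of the minimal free resolution of $R=R(X,L)$ over $S=\Sym H^0(X,L)$. Write $V=H^0(X,L)$. For each $q\ge 0$, the graded Betti number $b_{q,j}=\dim \mathrm{Tor}_q^S(R,\C)_{j}$ is computed by the homology of the Koszul complex
\[
\cdots\to \textstyle\bigwedge^{q+1}V\otimes H^0(L^{\otimes(h-1)})\to \bigwedge^{q}V\otimes H^0(L^{\otimes h})\to \bigwedge^{q-1}V\otimes H^0(L^{\otimes(h+1)})\to\cdots,
\]
whose homology in the middle is $K_{q,h}(X,L)$. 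In these terms, Property $N_p$ says two things. First, $K_{0,h}=0$ for $h\ge 2$ (this is generation in degree one, i.e.\ $N_0$). Second, $K_{q,h}=0$ for $1\le q\le p$ and $h\ge 2$ (the first $p$ syzygy steps are linear). Because $R_0=\C=S_0$ and $R_1=V=S_1$, the terms with $h\le 1$ cause no trouble. So the goal is to show $K_{q,h}=0$ for $0\le q\le p$ and $h\ge 2$.

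The key step is to express $K_{q,h}$ through $M_L$. The evaluation sequence is a sequence of vector bundles, since $L$ is globally generated. Taking exterior powers gives the exact sequences
\[
0\to \textstyle\bigwedge^{k}M_L\to \bigwedge^k V\otimes\ox\to \bigwedge^{k-1}M_L\otimes L\to 0 .
\]
Twist by $L^{\otimes h}$ and take cohomology. A standard diagram chase, splicing these sequences in the manner of Lazarsfeld's kernel-bundle argument, yields
\[
K_{q,h}(X,L)\iso \operatorname{coker}\Big(\textstyle\bigwedge^{q+1}V\otimes H^0(L^{\otimes(h-1)})\to H^0(\bigwedge^{q}M_L\otimes L^{\otimes h})\Big)\hookrightarrow H^1\big(\textstyle\bigwedge^{q+1}M_L\otimes L^{\otimes(h-1)}\big),
\]
and this injection is an isomorphism whenever $H^1(\bigwedge^{q+1}V\otimes L^{\otimes(h-1)})=0$.

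Now apply the hypothesis with $k=q+1\in\{1,\dots,p+1\}$ and with $h-1\ge 1$. It gives $K_{q,h}=0$ for all $0\le q\le p$ and $h\ge 2$, which is Property $N_p$. The case $q=0$ deserves a separate check, because $N_0$ (surjectivity of $S\to R$) is needed before the linearity statements make sense. For $q=0$, the vanishing of $H^1(M_L\otimes L^{\otimes h})$ for $h\ge1$ gives surjectivity of $V\otimes H^0(L^{\otimes h})\to H^0(L^{\otimes(h+1)})$ for every $h\ge 1$. By induction on degree, this shows that $S\to R$ is surjective.

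For the converse, assume $H^1(L^{\otimes i})=0$ for $i\ge 1$. Then $H^1(\bigwedge^{q+1}V\otimes L^{\otimes(h-1)})=0$ whenever $h-1\ge1$, so the injection above is an isomorphism. Property $N_p$ gives $K_{q,h}=0$, and hence the vanishing of $H^1(\bigwedge^{k}M_L\otimes L^{\otimes h})$ for $1\le k\le p+1$ and $h\ge1$.

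The main obstacle is the precise identification of $K_{q,h}$ with the cokernel above. This relies on characteristic zero: there we can use the splitting of $\bigwedge^q V\otimes V\to\bigwedge^{q+1}V$, equivalently the compatibility of the Koszul differential with the contraction maps, to identify the middle homology with $H^0(\bigwedge^qM_L\otimes L^{\otimes h})$ modulo the image. I would take this from Green's original argument, or from \cite[Proposition 2.5]{AN10}, rather than redo it.
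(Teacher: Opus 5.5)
Your argument is correct and is the standard kernel-bundle description of Koszul cohomology, $K_{q,h}(X,L)\iso\operatorname{coker}\bigl(\bigwedge^{q+1}V\otimes H^0(L^{\otimes(h-1)})\to H^0(\bigwedge^{q}M_L\otimes L^{\otimes h})\bigr)\hookrightarrow H^1(\bigwedge^{q+1}M_L\otimes L^{\otimes(h-1)})$. This is what lies behind \cite[Proposition~2.5]{AN10}, which the paper cites in place of a proof, and your deduction of both directions from it is right. One small correction: that identification does not actually use characteristic zero, since the exterior-power sequences of $M_L$ and their compatibility with the Koszul differential hold over any field.
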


\subsection{Cohomological rank function and basepoint-freeness threshold}\label{sec:cohrkfunctionandepsilon1}
Let $(X,L)$ be a polarized abelian variety of arbitrary dimension, and let $l$ denote the class of $L$ in the N\'eron--Severi group $\NS(X)$.

Jiang--Pareschi \cite{JP20} introduced the \emph{cohomological rank functions} $h^i_{\shf F,l}(x)$ for $\shf F\in D^b(X)$ and $x\in\mathbb R$. Roughly speaking, the value $h^i_{\shf F, l}(x)$ interpolates the dimension of $H^i(X,\shf F\otimes L^{\otimes x}\otimes\alpha)$ for general $\alpha\in\widehat X\iso\Pic^0(X)$, even though this cohomology group is not defined unless $x\in\mathbb Z$. 

In particular, when $\shf F$ is the ideal sheaf $\is q$ of a point $q\in X$, Jiang--Pareschi \cite[Section~8]{JP20} introduced the \emph{basepoint-freeness threshold}
\[
\epsilon_1(l)=\inf\big\{x\in\mathbb{Q} \:|\: h^1_{\is{q},l}(x)=0\big\},
\]
which encodes positivity properties of $l$. It is independent of the point $q$ and satisfies $\epsilon_1(l)\le 1$.  We use the following criterion of Jiang--Pareschi \cite{JP20} and Caucci \cite{Caucci20}.
\begin{theorem}\label{thm:caucci}
A line bundle $L$ is basepoint-free if and only if $\epsilon_1(l)<1$. Given an integer $p\ge 0$, if $\epsilon_1(l)<\frac{1}{p+2}$, then $L$ satisfies Property $N_p$.
\end{theorem}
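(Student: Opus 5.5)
Both assertions are results of Jiang--Pareschi and Caucci, and the plan below follows their approach. Throughout I use the definition of cohomological rank functions through the multiplication maps $\mu_b\colon X\to X$. For $x=a/b$ one sets
\[
h^i_{\shf F,l}(a/b)=b^{-2\dim X}\,h^i\big(X,\mu_b^*\shf F\otimes L^{\otimes ab}\otimes\alpha\big)
\]
for general $\alpha\in\widehat X$. A $\mathbb Q$-twisted sheaf $\shf F\langle xl\rangle$ is called $\IT(0)$ when the corresponding pulled-back twists have no higher cohomology for every $\alpha$.

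I will use two formal facts from \cite{JP20}.
\begin{enumerate}
\item[(a)] The $\IT(0)$ condition is open in $x$. Hence $h^1_{\is q,l}$ vanishes exactly on the open interval $(\epsilon_1(l),\infty)$, and $\is q\langle xl\rangle$ is $\IT(0)$ precisely for $x>\epsilon_1(l)$.
\item[(b)] If $\shf F\langle xl\rangle$ and $\shf G\langle yl\rangle$ are $\IT(0)$ with $\shf F,\shf G$ locally free, then $(\shf F\otimes\shf G)\langle (x+y)l\rangle$ is $\IT(0)$.
\end{enumerate}
Fact (b) is proved by pulling back along $\mu_b$ and using that a tensor product of $\IT(0)$ sheaves is $\IT(0)$ (Mukai). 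In characteristic zero, $\bigwedge^k$ is a direct summand of $\otimes^k$, so fact (b) also applies to exterior powers.

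\textbf{Basepoint-freeness.} Suppose $\epsilon_1(l)<1$. By (a), $\is q\langle l\rangle$ is $\IT(0)$, so $H^1(\is q\otimes L\otimes\alpha)=0$ for all $\alpha$. Hence $q$ is not a base point of any $L\otimes\alpha$. Since $t_y^*L\iso L\otimes\varphi_l(y)$ and the threshold does not depend on $q$, $L$ has no base points.

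Conversely, suppose $L$ is basepoint-free. Then every translate $L\otimes\alpha$ is basepoint-free as well. Therefore $H^1(\is q\otimes L\otimes\alpha)=0$ for all $\alpha$, and $H^2$ vanishes trivially, so $\is q\langle l\rangle$ is $\IT(0)$. Openness in (a) then gives $\epsilon_1(l)<1$.

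\textbf{Property $N_p$.} The key step is to transfer positivity from $\is q$ to the kernel bundle. I would prove the following: if $\is q\langle xl\rangle$ is $\IT(0)$ with $x<1$, then $M_L\langle \tfrac{x}{1-x}l\rangle$ is $\IT(0)$.

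The approach is to realize $M_L$ relatively. On $X\times X$, consider the exact sequence
\[
0\to \is\Delta\otimes p_2^*L\to p_2^*L\to \mathcal O_\Delta\otimes p_2^*L\to 0,
\]
and push it forward along $p_1$ after twisting by $p_2^*L$. Composing with the difference morphism $X\times X\to X$ identifies the relevant terms with the Fourier--Mukai transform of $\is q\otimes L$, pulled back by $\varphi_l$. The rescaling $x\mapsto x/(1-x)$ comes from how $\varphi_l^*$ and the Fourier--Mukai transform act on $\mathbb Q$-twists: a twist by $l$ on one side corresponds to $-l^{-1}$ on the other. To make this rigorous, I would carry out the computation after pulling back by $\mu_b$ so that all twists are integral.

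Granting this, suppose $\epsilon_1(l)<\tfrac1{p+2}$ and choose $x$ with $\epsilon_1(l)<x<\tfrac1{p+2}$. Then $y=\tfrac{x}{1-x}<\tfrac1{p+1}$. By (b), $\bigwedge^kM_L\langle kyl\rangle$ is $\IT(0)$, and $ky<1$ for $k\le p+1$. Twisting further by the ample class $(h-ky)l$ preserves $\IT(0)$, so
\[
H^1\big(X,\bigwedge^kM_L\otimes L^{\otimes h}\big)=0\qquad\text{for all }1\le k\le p+1,\ h\ge1.
\]
Proposition~\ref{prop:green} then gives Property $N_p$; note that $L$ is globally generated by the first part, since $\epsilon_1(l)<1$.

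I expect the main obstacle to be the $M_L$-transfer step: one has to track the Fourier--Mukai transform and $\varphi_l^*$ carefully enough to get exactly the twist $\tfrac{x}{1-x}$ rather than a weaker one.
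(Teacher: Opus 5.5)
Your outline is correct and is essentially the argument of the sources the paper cites for this theorem; the paper itself gives no proof. You reproduce openness of the $\IT(0)$ locus for the basepoint-freeness equivalence, then the transfer from $\is q\langle xl\rangle$ to $M_L\langle\tfrac{x}{1-x}l\rangle$, and finally Caucci's tensor-product argument feeding into Proposition~\ref{prop:green}. You only sketch the transfer step, but it is exactly \cite[Theorem~D]{JP20}, which the paper itself quotes in the proof of Proposition~\ref{GP Conjecture}.

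One harmless slip in fact (a): by continuity, $h^1_{\is q,l}$ vanishes on the closed interval $[\epsilon_1(l),\infty)$, not the open one. For instance, $h^1_{\is q,l}(1)=0$ for every ample $L$, including those with base points, where $\epsilon_1(l)=1$. Only the $\IT(0)$ locus is the open interval, and that is all your argument actually uses.
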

Thus bounds on $\epsilon_1(l)$ give an effective approach to syzygies of ample line bundles on abelian varieties.

\subsection{IT(0) and GV of $\mathbb{Q}$-twisted sheaves}
Let $X$ be an abelian variety. A coherent sheaf $\shf F$ on $X$ is said to satisfy the \emph{Index Theorem with index $i$}, abbreviated $\IT(i)$, if
\[
H^j(X,\shf F\otimes\alpha)=0
\qquad
\text{for every }\alpha\in\Pic^0(X)\text{ and every }j\neq i.
\]
The notion of generic vanishing (GV) sheaf was developed by Pareschi--Popa \cite{PP11}. The corresponding notions for $\mathbb{Q}$-twisted coherent sheaves on $X$ were introduced by Jiang--Pareschi \cite{JP20}, and further developed by Caucci \cite{Caucci20} and Ito \cite{Ito22}.  

Let $l\in \NS(X)$ be a polarization, and $x\in \mathbb{Q}$. The symbol $\shf{F}\langle xl\rangle $ denotes the equivalence class of the pair $(\shf{F}, xl)$, where the equivalence relation is given by
\[(\shf{F}\otimes L^{m}, xl)\sim (\shf{F}, (m+x)l)\]
for any line bundle $L$ with $[L]=l$ and $m\in \mathbb{Z}$. In particular, $(\shf{F}\otimes P_{\alpha})\langle xl \rangle=\shf{F}\langle xl \rangle$ for any $\alpha\in \widehat{X}$. 
\begin{definition}
A $\mathbb{Q}$-twisted sheaf $\shf{F}\langle xl \rangle$ is  IT(0) (resp. GV) if $\mu^*_b\shf{F}\otimes L^{\otimes ab}$ is IT(0) (resp. GV) in the usual sense, where $x=\frac{a}{b}$ with $b>0$, the line bundle $L$ represents $l$, and $\mu_b: X\rightarrow X$ is multiplication by $b$.
\end{definition}
The definition does not depend on the choice of $a, b\in \mathbb{Z}$ and of $L\in \Pic(X)$. 

\begin{proposition}[{\cite[Proposition 3.4]{Caucci20}}]\label{positivity of tensor}
Let $\shf{F}, \shf{G}$ be coherent sheaves on $X$. Assume that one of them is locally free. If $\shf{F}\langle xl\rangle$ is IT(0) and $\shf{G}\langle yl\rangle$ is GV, then $\shf{F}\langle xl\rangle\otimes \shf{G}\langle yl\rangle:=(\shf{F}\otimes\shf{G})\langle (x+y)l\rangle$ is IT(0).
\end{proposition}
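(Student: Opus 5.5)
The plan is to reduce the statement to the untwisted case by pulling back along $\mu_b$. Write $x=\frac{a}{b}$ and $y=\frac{c}{b}$ with a common denominator $b>0$, and fix $L$ representing $l$. Because $\mu_b^*$ commutes with tensor products, we have
\[
\mu_b^*(\shf F\otimes\shf G)\otimes L^{\otimes (a+c)b}\iso\bigl(\mu_b^*\shf F\otimes L^{\otimes ab}\bigr)\otimes\bigl(\mu_b^*\shf G\otimes L^{\otimes cb}\bigr).
\]
Set $\shf F'=\mu_b^*\shf F\otimes L^{\otimes ab}$ and $\shf G'=\mu_b^*\shf G\otimes L^{\otimes cb}$. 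By definition, $\shf F'$ is IT(0) and $\shf G'$ is GV. Pullback by the flat map $\mu_b$ preserves local freeness, so one of $\shf F'$, $\shf G'$ is locally free. It therefore suffices to prove the untwisted statement: if $\shf F$ is IT(0), $\shf G$ is GV, and one of them is locally free, then $\shf F\otimes\shf G$ is IT(0).

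For the untwisted statement we first handle the case where $\shf F$ is locally free. Let $\Phi=\FM$ be the Fourier--Mukai transform $D^b(X)\to D^b(\widehat X)$, and let $\Psi$ be the transform in the opposite direction. Since $\shf F$ is IT(0), we have $\Phi(\shf F)=\widehat{\shf F}$, a locally free sheaf in degree $0$. Mukai's inversion gives
\[
\shf F\iso(-1_X)^*\Psi(\widehat{\shf F})[g],
\]
where $g=\dim X$. This exhibits $\shf F$, up to shift and sign, as the transform of a vector bundle. The fibres of $\widehat{\shf F}$ are $H^0(X,\shf F\otimes P_\alpha)$, so $\shf F$ admits a "continuous resolution" by the line bundles $P_\alpha^{\vee}$ parametrised by $\widehat X$.

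Next we compute the cohomology of the tensor product via the projection formula:
\[
R\Gamma(X,\shf F\otimes\shf G\otimes P_\beta)\iso R\Gamma\bigl(\widehat X,\widehat{\shf F}\otimes\Phi(\shf G\otimes P_\beta)\bigr)\quad\text{(up to the sign involution and a shift).}
\]
Here $\Phi(\shf G\otimes P_\beta)$ is a translate of $\Phi(\shf G)$. The GV condition, through Hacon's criterion in Pareschi--Popa's form, says that
\[
R\Delta\shf G:=R\mathcal{H}om(\shf G,\ox)
\]
has Fourier--Mukai transform concentrated in degree $g$. Dually, $\Phi(\shf G)$ satisfies $\mathcal H^i=0$ for $i<0$ and $\operatorname{codim}\operatorname{Supp}\mathcal H^i\geq i$. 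Tensoring with the locally free $\widehat{\shf F}$ preserves these support conditions. Running the spectral sequence
\[
H^p\bigl(\widehat X,\widehat{\shf F}\otimes\mathcal H^q\bigr)\Rightarrow H^{p+q}
\]
gives vanishing in degrees $>0$ provided each $H^p(\widehat{\shf F}\otimes\mathcal H^q)$ vanishes for $p+q>0$. This reduction is not automatic, so we would instead use Pareschi--Popa's duality: it suffices to show that $H^i(\shf F\otimes\shf G\otimes P_\beta)=0$ for $i>0$ and all $\beta$. By Serre duality this is $\Ext^{g-i}(\shf G,\shf F^\vee\otimes P_\beta^{-1})$. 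The local-to-global spectral sequence combined with the IT(0) property of $\shf F$, which forces $\shf F^\vee$ to be IT($g$), then gives the vanishing through the standard GV–IT pairing of Pareschi–Popa, namely
\[
\Ext^j(\shf G,\shf H)=0\quad\text{for } j<g \text{ when } \shf G \text{ is GV and } \shf H \text{ is IT}(g).
\]
This pairing reduces, via $\Phi$, to $\Hom$ in $D^b(\widehat X)$ between an object in degrees $\geq g$ and a sheaf shifted to degree $g$.

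The main obstacle is exactly this GV–IT($g$) Ext-vanishing. It rests on the full force of the GV condition, that is, the codimension bounds on $\mathcal H^i$ of the transform or equivalently the transform of $R\Delta\shf G$ being a sheaf. The case where $\shf G$ rather than $\shf F$ is locally free is symmetric: replace $\shf F^\vee$ by $\shf G^\vee$ and apply Grothendieck duality.
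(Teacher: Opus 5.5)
Your reduction to the untwisted case by pulling back along $\mu_b$ is correct. It is the reduction used in Caucci's proof, which the paper quotes without giving a proof of its own; the untwisted statement is then simply taken from Pareschi--Popa, and you could have cited it the same way.

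The gap is in your attempted proof of that untwisted statement. The Serre-duality step makes no progress. For the locally free sheaf $\shf H=\shf F^\vee\otimes P_\beta^{-1}$ one has $\Ext^j(\shf G,\shf H)\iso H^{g-j}(X,\shf G\otimes\shf H^\vee)^\vee$ with $\shf H^\vee$ $\IT(0)$. So the ``GV--$\IT(g)$ pairing'' you invoke is just the claim itself in other notation.

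Its one-line justification is also wrong. $\FM(\shf G)$ lives in degrees $[0,g]$, not $\geq g$; for instance, an $\IT(0)$ sheaf is GV and transforms to degree $0$. In $\Hom\bigl(\FM(\shf G),\widehat{\shf H}[j-g]\bigr)$ the target sits in degree $g-j\in[1,g]$, inside that range, so degrees alone give no vanishing for $0\leq j<g$.

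To rescue this route you must move the derived dual onto $\shf G$:
\[
\Ext^j(\shf G,\shf H)\iso\Hom\bigl(\shf H^\vee,R\Delta\shf G[j]\bigr)\iso\Hom\bigl(\FM(\shf H^\vee),\FM(R\Delta\shf G)[j]\bigr).
\]
Then use the nontrivial Hacon--Pareschi--Popa theorem that $\FM(R\Delta\shf G)$ is a sheaf concentrated in degree $g$. The right-hand side becomes an $\Ext^{j-g}$ between sheaves, hence zero for $j<g$. Your ``symmetric'' case ($\shf G$ locally free) needs exactly the same input, applied to $\shf G^\vee=R\Delta\shf G$.

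A simpler fix: the spectral-sequence argument you abandoned is correct; you only dropped the shift by $g$. Since one of $\shf F,\shf G$ is locally free, $\shf F\otimes\shf G=\shf F\otimes^{L}\shf G$. Let $E_\beta=\FM(\shf F\otimes P_\beta)$, which is locally free. Mukai's inversion $\shf F\otimes P_\beta\iso(-1)^*\Psi(E_\beta)[g]$ and the projection formula give
\[
H^i(X,\shf F\otimes\shf G\otimes P_\beta)\iso\mathbb H^{i+g}\bigl(\widehat X,\,E_\beta\otimes\FM(\shf G')\bigr),\qquad \shf G'=(-1)^*\shf G .
\]
Put $V^j(\shf G')=\{\alpha: H^j(X,\shf G'\otimes P_\alpha)\neq0\}$. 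By cohomology and base change, $\operatorname{Supp}R^q\FM(\shf G')\subseteq\bigcup_{j\geq q}V^j(\shf G')$, and since $\shf G'$ is GV this has codimension $\geq q$. Hence $H^p\bigl(\widehat X,E_\beta\otimes R^q\FM(\shf G')\bigr)=0$ for $p>g-q$.

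So the condition you need is $p+q>g$, not $p+q>0$, and it holds automatically. This gives $\mathbb H^m=0$ for $m>g$, i.e.\ $H^i=0$ for every $i>0$ and every $\beta$. The argument handles both local-freeness cases at once, since local freeness enters only through $\otimes=\otimes^{L}$. It also uses only the elementary direction of the GV characterization, so this is the proof you should write.
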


For a line bundle $M$, the $\mathbb{Q}$-twisted sheaf $M\langle xl\rangle$ is IT(0) (resp. GV) if and only if  $M+xL$ is ample (resp. nef).

\subsection{Semihomogeneous bundles}\label{subsec: semihomogeneous}
A vector bundle $\shf E$ on an abelian variety is called \emph{semihomogeneous} if, for every $q\in X$, there exists $\alpha_q\in\Pic^0(X)$ such that
\[
t_q^*\shf E\simeq\shf E\otimes\alpha_q,
\]
where $t_q: X\rightarrow X$ is the translation of $X$ by $q$. 

We set
\[
\delta(\shf E)=\frac{\ch_1(\shf E)}{\rk{\shf E}}.
\]
For a semihomogeneous bundle $\shf{E}$, one has
\begin{equation}\label{eq:2.1}
\ch(\shf E)=\rk{\shf E}\,e^{\delta(\shf E)}.
\end{equation}
Semihomogeneous bundles are Gieseker-semistable, in particular, slope-semistable with respect to every polarization \cite[Proposition~6.13]{Mukai78}.

We shall repeatedly use the following elementary properties. If $\shf E$ and $\shf F$ are semihomogeneous, then every nonzero bundle of the form $\shf E\otimes\shf F$ and $\bigwedge^j\shf E$ is semihomogeneous. Indeed,
\[
t_q^*(\shf E\otimes\shf F)
\simeq(\shf E\otimes\shf F)\otimes
\alpha_q\otimes\beta_q
\]
and
\[
t_q^*\big(\bigwedge^j\shf E\big)
\simeq\bigwedge^j\shf E\otimes\alpha_q^{\otimes j}.
\]
Moreover,
\begin{equation}\label{eq:2.2}
\delta(\shf E\otimes\shf F)=\delta(\shf E)+\delta(\shf F),
\qquad
\delta\big(\bigwedge^j\shf E\big)=j\delta(\shf E).
\end{equation}

We record the index lemma for semihomogeneous vector bundles in the form needed below.
\begin{lemma}\label{lem:index}
Let $\shf E$ be a semihomogeneous vector bundle on an abelian surface.
\begin{enumerate}
\item[(i)] If $\delta(\shf E)$ is ample, then $\shf E$ is $\IT(0)$.
\item[(ii)] If $\delta(\shf E)^2<0$, then $\shf E$ is $\IT(1)$. In particular,
\[
H^2(X,\shf E\otimes\alpha)=0
\qquad\text{for every }\alpha\in{\Pic}^0(X).
\]
\end{enumerate}
\end{lemma}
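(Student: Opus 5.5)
The plan is to reduce both statements to the case of a simple semihomogeneous bundle, and then to a line bundle on an isogenous abelian surface, where the index theorem for line bundles is available. By Mukai's structure theory \cite{Mukai78}, every semihomogeneous bundle $\shf E$ admits a filtration, in fact a decomposition $\shf E\iso\bigoplus_i \shf E_i\otimes \shf U_i$, in which the $\shf E_i$ are simple semihomogeneous bundles with $\delta(\shf E_i)=\delta(\shf E)$ and the $\shf U_i$ are unipotent, i.e.\ successive extensions of $\ox$. By the long exact sequences in cohomology, the vanishings $H^j(X,\shf E\otimes\alpha)=0$ for all $\alpha$ then follow once they hold for the simple factors $\shf E_i\otimes\alpha$. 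So we may assume $\shf E$ is simple.

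For a simple semihomogeneous $\shf E$, Mukai shows that $\shf E\iso \pi_*(N)$ for some isogeny $\pi\colon Y\to X$ and a line bundle $N$ on $Y$ satisfying $\ch_1(N)=\pi^*\delta(\shf E)$, up to numerical equivalence. This identity is the one I would double-check via \eqref{eq:2.1}: $\pi^*\shf E$ is a direct sum of line bundles algebraically equivalent to one another, each with class $\pi^*\delta(\shf E)$. Since $\pi$ is finite, $H^j(X,\pi_*N\otimes\alpha)=H^j(Y,N\otimes\pi^*\alpha)$, and $\pi^*\alpha\in\Pic^0(Y)$. It therefore suffices to know the vanishing for the line bundle $N$ twisted by all of $\Pic^0(Y)$. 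Moreover, $\pi^*$ multiplies intersection numbers by $\deg\pi>0$ and preserves ampleness of $\mathbb Q$-classes. Hence $\ch_1(N)$ is ample when $\delta(\shf E)$ is ample, and $\ch_1(N)^2<0$ when $\delta(\shf E)^2<0$.

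Case (i): if $N$ is ample, then by Mumford's vanishing $H^j(Y,N\otimes\beta)=0$ for all $j>0$ and all $\beta\in\Pic^0(Y)$, so $N$ is $\IT(0)$ and hence so is $\shf E$.

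Case (ii): if $N^2<0$, Riemann--Roch gives $\chi(N\otimes\beta)=N^2/2<0$ on an abelian surface. It remains to show $H^0=H^2=0$. If $H^0(N\otimes\beta)\neq0$, then $N$ is effective. An effective divisor $D$ on an abelian surface is nef, since every curve moves by translation, so $D^2\ge 0$, which is a contradiction. By Serre duality ($K_Y$ trivial), $H^2(N\otimes\beta)=H^0(N^{-1}\otimes\beta^{-1})^\vee$, and the same argument applied to $-N$ shows this vanishes. Thus $N$ is $\IT(1)$, and so is $\shf E$.

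The main obstacle is the reduction step: justifying that a non-simple semihomogeneous bundle has a filtration by simple ones with the same $\delta$, twisted by unipotent pieces. I would cite Mukai \cite[Proposition~6.18 and Theorem~5.8]{Mukai78} for this rather than reprove it. If needed, an alternative route is to pull back by the isogeny $\mu_n$, which makes $n^2\delta(\shf E)$ integral and $\mu_n^*\shf E$ a successive extension of line bundles in a single algebraic class. The $\IT$ property can then be checked on $\mu_n^*\shf E$, since $\shf E$ is a direct summand of $\mu_{n*}\mu_n^*\shf E$, and the line-bundle argument above applies.
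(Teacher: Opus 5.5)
Your proposal is correct and follows the same strategy as the paper. You reduce to the simple case through Mukai's structure theorem and long exact sequences, and you handle simple bundles through Mukai's isogeny description $\shf E\iso\pi_*N$. The difference is only in how the simple case is executed: you pass to the line bundle $N$ on the isogenous surface by the projection formula and settle (ii) with the elementary fact that effective divisors on an abelian surface have nonnegative square, whereas the paper cites the fact that the index of $\shf E$ equals that of $\det\shf E$ and reads it off from the sign of $\chi(\det\shf E)$ via Mumford's index theorem.
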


\begin{proof}
Suppose first that $\shf E$ is simple. A nondegenerate simple semihomogeneous bundle satisfies the Index Theorem, and its index is the index of its determinant. This follows from Mukai's isogeny description and Mumford's Index Theorem; see \cite[Proposition~6.3]{Gulbrandsen08} and \cite[\S ~16]{Mumford74}. If $\delta(\shf E)$ is ample, then $\det\shf E$ is ample and has index zero. This proves (i) when $\shf E$ is simple.

If $\delta(\shf E)^2<0$, then $(\ch_1(\det\shf E))^2=\rk{\shf E}^{2}\delta(\shf E)^2<0. $ Thus $\chi(\det\shf E)<0$.  The index of a nondegenerate line bundle on an abelian surface belongs to $\{0,1,2\}$, and the sign of its Euler characteristic is $(-1)^i$, where $i$ is the index. It follows that $i(\det\shf E)=1$. Hence $i(\shf E)=1$, which proves (ii) in the simple case.

For an arbitrary semihomogeneous bundle, Mukai's structure theorem \cite[Proposition~6.18]{Mukai78} gives a decomposition into summands, each of which admits a filtration with the same simple semihomogeneous factors. Applying the simple case to these factors and then using the associated long exact cohomology sequences proves both assertions in general.
\end{proof}

The following positivity criterion is elementary.

\begin{lemma}\label{lem:positivecone}
Let $(X,L)$ be a polarized abelian surface and let
$D\in\NS(X)_{\mathbb R}$.
If $D^2>0$ and $D\cdot L>0$, then $D$ is ample.
\end{lemma}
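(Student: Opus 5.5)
The plan is to use the Hodge index theorem together with the fact that the real ample cone of an abelian surface is one of the two connected components of the positive cone
\[
\mathcal C=\{D\in\NS(X)_{\mathbb R}\,:\,D^2>0\}.
\]
On an abelian surface every effective curve is nef, since translates of a curve cover $X$ and meet any other curve properly. Therefore the nef cone is the closure of the component of $\mathcal C$ containing $L$, and ampleness of a real class $D$ is equivalent to $D$ lying in the interior of that cone. I will use Kleiman's criterion together with the description
\[
\mathrm{Nef}(X)=\overline{\mathcal C^+},\qquad \mathrm{Amp}(X)=\mathcal C^+,
\]
where $\mathcal C^+$ denotes the component containing $l$.

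First I show $\mathrm{Amp}(X)\subseteq\mathcal C^+$, which is standard: an ample class has positive square, and the ample cone is convex and contains $l$. For the reverse inclusion, take $D\in\mathcal C^+$ and any nonzero effective curve class $E$. Then $E$ is nef, so $E$ lies in $\overline{\mathcal C^+}$ and $E^2\ge0$. The Hodge index theorem, in the form of the reverse Cauchy--Schwarz inequality for the signature $(1,\rho-1)$ form, gives
\[
(D\cdot E)^2\ge D^2E^2\ge 0.
\]
I then argue that $D\cdot E>0$. The pairing between a class in the open cone $\mathcal C^+$ and a nonzero class in its closure is strictly positive. If it were zero, the Hodge index theorem would force $E^2<0$ unless $E$ is proportional to $D$; in the proportional case $E^2=0$ would contradict $D^2>0$. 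This yields $D\cdot E>0$ for every curve, together with $D^2>0$. Since $\mathcal C^+$ is open, the same positivity holds on a neighbourhood of $D$, so Kleiman's criterion (or Nakai--Moishezon for real classes, via the closedness of the cone of curves, which for abelian surfaces equals $\overline{\mathcal C^+}$ by self-duality) shows $D$ is ample.

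It remains to check that the hypotheses $D^2>0$ and $D\cdot L>0$ place $D$ in $\mathcal C^+$ rather than in $-\mathcal C^+$. If $D$ were in $-\mathcal C^+$, then $-D$ would be ample and $D\cdot L<0$, a contradiction. Hence $D\in\mathcal C^+$ and is ample.

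The only delicate point is the identification $\mathrm{Amp}(X)=\mathcal C^+$ for real classes, that is, the statement that the nef cone is the full closed positive cone. I would justify this by noting that every curve $E$ has $E^2\ge0$ on an abelian surface, since no curve has negative self-intersection. Consequently the closed cone of curves lies in $\overline{\mathcal C^+}$, and by duality under the Hodge form the nef cone contains $\overline{\mathcal C^+}$, whose interior is $\mathcal C^+$.
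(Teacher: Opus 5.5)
Your proof is correct and follows essentially the same route as the paper: $D\cdot L>0$ places $D$ in the component $\mathcal C^+$ of the positive cone containing $l$; every curve has nonnegative square and so lies in $\overline{\mathcal C^+}$; the Hodge index theorem then gives $D\cdot E>0$; and Nakai--Moishezon or Kleiman yields ampleness. One small imprecision: in general it is the closed cone $\overline{NE}(X)$, not the cone of curves itself, that equals $\overline{\mathcal C^+}$, but your Kleiman argument only needs the closed cone.
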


\begin{proof}
The hypotheses imply that $D$ is in the component of the positive cone that contains $L$. For any irreducible curve $B$, $B^2\ge 0$ and $B\cdot L>0$, so the class of $B$ is a nonzero element of the closure of the component. The Hodge index theorem then gives $D\cdot B>0$. Hence $D$ is ample by the Nakai--Moishezon criterion. 
\end{proof}

\subsection{Fourier--Mukai transforms and skew Pontryagin products}\label{subsect:spp}

Let $\mathcal P$ be a normalized Poincar\'e bundle on
$X\times\widehat X$, and write
\[
\FM:D^b(X)\longrightarrow D^b(\widehat X)
\]
for the corresponding Fourier--Mukai transform. The Skew Pontryagin product was introduced by Pareschi in \cite{Pareschi00} and used extensively in \cite{PP04} to study multiplication maps on spaces of sections of line bundles on abelian varieties.

Let $L$ be an ample line bundle and $\varphi_l:X\to\widehat X$ be the polarization isogeny. For any coherent sheaf $\shf{E}$ on $X$, the skew Pontryagin product satisfies 
\begin{equation}\label{eq:skewFM}
L\widehatstar\shf E
\simeq L\otimes\varphi_l^*\FM(L\otimes\shf E)
\end{equation}
whenever $\FM(L\otimes\shf E)$ is a vector bundle; see \cite[Remark 1.2]{Pareschi00} and \cite[(3.10)]{Mukai81}.  In particular, at the origin one has 
\begin{equation}\label{eq:skewkernel}
L\widehatstar\is 0\simeq M_L,
\end{equation}
see \cite[Proof of Prop. 8.1]{JP20}. For a general point $q$, the bundle $L\widehatstar\is q$ is a translate of $M_L$, up to a topologically trivial twist.  Since $\epsilon_1(l)$ is independent of $q$, we take $q=0$ whenever we use \eqref{eq:skewkernel}.  

Fourier--Mukai transforms preserve semihomogeneity whenever the transform is locally free. Indeed, by
\cite[Proposition~5.1 and Definition~5.2]{Mukai78}, a vector bundle $\shf{E}$ is semihomogeneous if and only if 
\[
\dim\left\{(x,\alpha)\in X\times \widehat X \:|\: t_x^*\shf E\simeq\shf E\otimes P_\alpha\right\}
=\dim X.
\]
The covariance identities
\[
\FM(t_x^*\shf E)\simeq\FM(\shf E)\otimes P_{\pm x},
\qquad
\FM(\shf E\otimes P_\alpha)\simeq t_{\pm\alpha}^*\FM(\shf E)
\]
identify the corresponding translation--twist groups.  Pullback by an isogeny preserves semihomogeneity by \cite[Proposition~5.4(1)]{Mukai78}. Consequently, if $L\otimes\shf E$ is $\IT(0)$ and semihomogeneous, then $L\widehatstar\shf E$ is a semihomogeneous vector bundle.

We conclude the section with a useful calculation.

\begin{lemma}\label{lem:FM-calculation}
Let $(X, L)$ be a polarized abelian surface and write $L^2=2d$. Let $\shf E$ be a semihomogeneous vector bundle on $X$ with
\[
\ch(\shf E)=(n,-C,u),
\]
and assume that $L\otimes\shf E$ is $\IT(0)$.  Put
$\mathcal W_{\shf E}=L\widehatstar\shf E$. Then the following hold:
\begin{align*}
\rk{\mathcal W_{\shf E}}&=nd-L\cdot C+u,\\
c_1(\mathcal W_{\shf E})&=ul-dC,\\
\delta(\mathcal W_{\shf E})
&=\frac{ul-dC}{nd-L\cdot C+u}.
\end{align*}
\end{lemma}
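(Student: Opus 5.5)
We will compute the Chern character of $\Phi_{\mathcal P}(L\otimes\shf E)$ with Mukai's cohomological Fourier--Mukai transform, and then pull back by $\varphi_l$ and twist by $L$ as in \eqref{eq:skewFM}. Since $L\otimes\shf E$ is $\IT(0)$, the transform $\FM(L\otimes\shf E)$ is a vector bundle $\widehat{\shf W}$ on $\widehat X$, concentrated in degree $0$. Its rank equals $\chi(L\otimes\shf E)$. By Riemann--Roch on an abelian surface this is $\ch_2(L\otimes\shf E)$. Expanding $\ch(\shf E)e^{l}$ gives
\[
\ch(L\otimes\shf E)=\big(n,\; nl-C,\; u-L\cdot C+nd\big),
\]
so $\rk{\widehat{\shf W}}=nd-L\cdot C+u$. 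Pulling back by the isogeny $\varphi_l$ and tensoring with $L$ preserve rank, which gives the first formula.

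For $c_1$ we use Mukai's formula for the cohomological transform on $H^2$. For an object with Chern character $(r_0,D,\chi)$ on a surface, the transform has Chern character $(\chi,\,-\widehat D,\,r_0)$, where $D\mapsto\widehat D$ is the Poincaré-duality isomorphism $H^2(X)\to H^2(\widehat X)$. Up to sign conventions this gives $\ch_1(\widehat{\shf W})=-\widehat{(nl-C)}$ and $\ch_2(\widehat{\shf W})=n$.

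We then need two standard compatibilities of the polarization. First, $\varphi_l^*\widehat{l}=d\,l$ on an abelian surface, since $\varphi_l^*$ of the dual polarization is $\chi(L)\cdot l=d\,l$ in $\NS$. More generally, for $D\in\NS(X)$ one has $\varphi_l^*\widehat D=(L\cdot D)\,l-d\,D$. This is the surface identity coming from $\varphi_l^*\circ\widehat{(\;)}$ acting on $H^2$ as $d$ times the Hodge-index-type reflection $D\mapsto (L\cdot D/2d)\cdot 2l - D$ scaled appropriately. Granting this, we obtain
\[
c_1(\varphi_l^*\widehat{\shf W})=-\big((L\cdot(nl-C))\,l-d(nl-C)\big)=-(2dn-L\cdot C)\,l+dnl-dC=(L\cdot C-nd)\,l-dC.
\]
Tensoring with $L$ adds $\rk{}\cdot l=(nd-L\cdot C+u)\,l$, giving
\[
c_1(\mathcal W_{\shf E})=ul-dC.
\]
The formula for $\delta$ then follows by dividing $c_1$ by the rank.

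The main obstacle is fixing the signs and normalizations in Mukai's $H^2$-transform and the identity $\varphi_l^*\widehat D=(L\cdot D)l-dD$. To check these, I would test the computation on $\shf E=\ox$, i.e.\ $n=1$, $C=0$, $u=0$. There $L\widehatstar\ox=L\otimes\varphi_l^*\FM(L)$ should be the bundle $H^0(L)\otimes\ox$, or a twist of it, of rank $d$ and $c_1=0$. This agrees with rank $d$ and $c_1=0\cdot l-0=0$.

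As a second check I would use $\shf E=\is 0$ through \eqref{eq:skewkernel}. This is not locally free, but the formal Chern data $(1,0,-1)$ give rank $d-1$ and $c_1=-l$, matching $M_L$.

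Semihomogeneity of $\shf E$ is only used to ensure that the result is a vector bundle and that \eqref{eq:skewFM} applies, as remarked before the lemma.
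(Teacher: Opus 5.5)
Your outline is the paper's route: rank from Riemann--Roch because $L\otimes\shf E$ is $\IT(0)$, then $\ch_1\bigl(\FM(L\otimes\shf E)\bigr)=-\widehat D$ with $D=nl-C$, then the identity $\varphi_l^*\widehat D=(L\cdot D)l-dD$, and finally adding $\rk{\mathcal W_{\shf E}}\,l$ for the twist by $L$. Your assembly of $c_1(\mathcal W_{\shf E})=ul-dC$ from these ingredients is correct.

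\textbf{The gap.} The identity $\varphi_l^*\widehat D=(L\cdot D)l-dD$ is the only nontrivial content of the lemma, and you simply grant it. The remark about a ``Hodge-index-type reflection scaled appropriately'' is not an argument, and your two checks cannot replace one. Both $\shf E=\ox$ and $\shf E=\is 0$ have $C=0$, hence $D=l$, so you are only testing $\varphi_l^*\widehat l=dl$. That case does follow from Mukai's description of $\varphi_l^*\FM(L)$ as $H^0(X,L)\otimes L^{-1}$. But the wrong guess $\varphi_l^*\widehat D=dD$ also passes both checks, and it would give $c_1(\mathcal W_{\shf E})=(u-L\cdot C)l+dC$. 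The two candidates differ exactly on the component of $C$ not proportional to $l$, which is the case that matters when the lemma is applied to the wall bundles $\shf F,\shf G$.

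\textbf{How to close it.} You need an honest computation of $\varphi_l^*\circ\CFM$ on $H^2$. The paper uses Mukai's isomorphism $(\mathrm{id}_X\times\varphi_l)^*\mathcal P\simeq m^*L\otimes(L^{-1}\boxtimes L^{-1})$ and flat base change. These give $\varphi_l^*\CFM(D)=q_{2*}\bigl(q_1^*D\cdot\exp(\eta)\bigr)$ with $\eta=m^*l-q_1^*l-q_2^*l$. Since $\eta$ has K\"unneth bidegree $(1,1)$ and $q_1^*D$ has bidegree $(2,0)$, only $\eta^2/2$ contributes. Pairing with $\alpha\in\NS(X)$ yields $\tfrac12\,q_1^*D\cdot q_2^*\alpha\cdot\eta^2=d(D\cdot\alpha)-(L\cdot D)(L\cdot\alpha)$, that is, $-\varphi_l^*\widehat D=dD-(L\cdot D)l$. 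Your reflection heuristic could instead be made rigorous by linear algebra on $\bigwedge^2H^1(X,\mathbb Z)$, but some such computation is required.

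\textbf{A check that sees the orthogonal part.} Take $X=E_1\times E_2$ with $l=f_1+f_2$, where $f_1=[\{x\}\times E_2]$ and $f_2=[E_1\times\{y\}]$. Then $\FM(\mathcal O_{\{x\}\times E_2})$ is a degree-zero line bundle on $\widehat E_1\times\{\hat 0\}$ placed in cohomological degree $1$. Hence $\widehat{f_1}=[\widehat E_1\times\{\hat 0\}]$ and $\varphi_l^*\widehat{f_1}=f_2=(L\cdot f_1)l-f_1$, not $f_1$.
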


\begin{proof}
Since $L\otimes\shf E$ is $\IT(0)$, isomorphism \eqref{eq:skewFM} and the 
Riemann--Roch theorem give
\[
\rk{\mathcal W_{\shf E}}=\rk{\FM(L\otimes\shf E)}
=\chi(L\otimes\shf E)=nd-L\cdot C+u.
\]
Put $D:=c_1(L\otimes \shf{E})=nl-C$. Let $\CFM$ be the cohomological Fourier--Mukai transform, and write
\[\CFM(D)=-\widehat D.\]
We shall use the identity
\begin{equation}\label{eq:pullback-isogeny}
\varphi_l^*(\widehat D)=(L\cdot D)l-dD.
\end{equation}
Granting this, it follows that 
\[\begin{aligned}
c_1(\mathcal W_{\shf E})&=\rk{\mathcal W_{\shf E}}l-\varphi_l^*(\widehat D)\\
&=(nd-L\cdot C+u)l
-\left((2nd-L\cdot C)l-d(nl-C)\right)\\
&=ul-dC,
\end{aligned}\]
as desired, and the formula for $\delta(\mathcal W_{\shf E})$ follows from its definition.

Turning to the proof of \eqref{eq:pullback-isogeny}, let $m: X\times X\rightarrow X$ be the addition map, $q_i: X\times X\rightarrow X$ be the $i$-th projection, and $p_X, p_{\widehat{X}}$ be the projections from $X\times \widehat{X}$ to the corresponding factors. Consider the Cartesian diagram
\[
\begin{tikzcd}
X\times X
  \arrow[r, "\operatorname{id}_X\times\varphi_l"]
  \arrow[d, "q_2"']
&
X\times\widehat X
  \arrow[d, "p_{\widehat X}"]
\\
X
  \arrow[r, "\varphi_l"']
&
\widehat X.
\end{tikzcd}
\]
Let $\xi=c_1(\mathcal{P})$. By \cite[p.~161, proof of (3.10)]{Mukai81}, we have
\[
(\operatorname{id}_X\times\varphi_l)^*\mathcal P
\simeq
m^*L\otimes(L^{-1}\boxtimes L^{-1}).
\]
Consequently, $\eta:=
(\operatorname{id}_X\times\varphi_l)^*\xi=m^*l-q_1^*l-q_2^*l$.

By the definition of $\CFM$ and flat base change, we have
\begin{equation}\label{eq:FM-pullback}
\begin{aligned}
-\varphi_l^*(\widehat D)&=\varphi^*_l\CFM(D)\\
&=\varphi^*_l {p_{\widehat{X}}}_*\paren{p^*_XD\cdot \exp(\xi)}\\
&={q_2}_*\paren{\operatorname{id}_X\times\varphi_l}^*\paren{p^*_XD\cdot \exp(\xi)}\\
&= {q_2}_*\left(q_1^*D\cdot\exp(\eta)\right)\\
&={q_2}_*\left(q_1^*D\cdot\frac{\eta^2}{2}\right).
\end{aligned}
\end{equation}
For the last equality, note that under the K\"unneth decomposition, \(q_1^*D\) has bidegree \((2,0)\), whereas \(\eta\) has bidegree
\((1,1)\). Thus \(q_1^*D\cdot\eta^j\) has degree \(2+j\) on the
first factor, hence only $j=2$ contributes. 

Finally, to evaluate the right-hand side of \eqref{eq:FM-pullback}, for any \(\alpha\in\NS(X)\), applying
the projection formula, we get
\[
\begin{aligned}
\left(
{q_2}_*\left(q_1^*D\cdot\frac{\eta^2}{2}\right)
\right)\cdot\alpha
&=
\frac12
q_1^*D\cdot q_2^*\alpha\cdot
\left(m^*l-q_1^*l-q_2^*l\right)^2\\
&=d(D\cdot\alpha)
 -(L\cdot D)(L\cdot\alpha),
\end{aligned}
\]
which shows 
\[
-\varphi_l^*(\widehat D)=dD-(L\cdot D)l.
\]
This proves \eqref{eq:pullback-isogeny} and completes the proof.
\end{proof}

\section{Bridgeland stability on abelian surfaces}\label{sec:Bridgeland}
\subsection{Stability conditions}
Let $X$ be a smooth complex projective variety. Write $\Db(X)$ for its bounded derived category of coherent sheaves and $\Kn(X)$ for its numerical Grothendieck group. We briefly recall the definition and some basic properties of Bridgeland stability conditions on $\Db(X)$; see \cite{Bridgeland07}.

\begin{definition}
Let $\cA$ be an abelian category.  A \emph{stability function} on $\cA$ is a group homomorphism
\[
Z\colon K_0(\cA)\longrightarrow\C
\]
such that, for every nonzero object $E\in\cA$, either $\Im Z(E)>0$, or $\Im Z(E)=0$ and $\Re Z(E)<0$.
\end{definition}

For a nonzero object $E\in\cA$, its \emph{slope} with respect to $Z$ is defined by
\[
\mu_Z(E)=\begin{cases} -\dfrac{\Re Z(E)}{\Im Z(E)}, & \Im Z(E)>0,\\ +\infty, & \Im Z(E)=0. \end{cases}
\]
We say that $E$ is $\mu_Z$-\emph{(semi)stable} if, for every nonzero proper subobject $F\subset E$,
\[
\mu_Z(F)<(\leq)\;\mu_Z(E/F).
\]

\begin{definition}
Let $\cA$ be the heart of a bounded $t$-structure on $\Db(X)$, and let $Z\colon\Kn(X)\to\C$ be a stability function on $\cA$. Denote by $\sigma$ the pair of data $(\cA,Z)$.

A nonzero object $E\in\Db(X)$ is called $\sigma$-\emph{(semi)stable} if there exists $n\in\Z$ such that $E[n]\in\cA$ and $E[n]$ is $\mu_Z$-(semi)stable.

We call $\sigma$ a \emph{stability condition} on $\Db(X)$ if the following two properties hold.
\begin{enumerate}
\item Every nonzero object $E\in\cA$ admits a Harder--Narasimhan (HN) filtration
\begin{equation*}
0=E_0\subset E_1\subset\cdots\subset E_m=E
\end{equation*}
whose factors $A_i=E_i/E_{i-1}$ are $\sigma$-semistable and satisfy
\[
\mu_Z(A_1)>\mu_Z(A_2)>\cdots>\mu_Z(A_m).
\]

\item (Support Property) There exists a quadratic form $Q$ on $\Kn(X)_{\mathbb R}:=\Kn(X)\otimes \R$ such that
\begin{itemize}
\item $Q|_{\Ker Z}$ is negative definite;
\item $Q([E])\geq0$ for every $\sigma$-semistable object $E$.
\end{itemize}
\end{enumerate}
\end{definition}

Every strictly $\sigma$-semistable object admits a finite filtration whose factors are $\sigma$-stable objects of the same slope.  These factors are called its \emph{Jordan--H\"older factors}; see \cite[Appendix~A]{BMS16}.

We denote by $\Stab(X)$ the space of numerical stability conditions on $\Db(X)$.  It carries a natural Hausdorff topology, with respect to which the locus where a fixed object is stable is open.  Bridgeland's Deformation Theorem \cite{Bridgeland07} asserts that the forgetful map
\[
\Forg\colon\Stab(X)\longrightarrow\Hom(\Kn(X),\C),
\qquad
(\cA,Z)\longmapsto Z,
\]
is a local homeomorphism. In particular, the space $\Stab(X)$ is a complex manifold of dimension $\rk{\Kn(X)}$.

We shall use the following effective form of the deformation theorem; see \cite[Propositions~A.5 and A.8]{BMS16} and also \cite{Bayer19}.

\begin{proposition}[{\cite[Propositions~A.5 and A.8]{BMS16}}]
\label{propdef:stabQsigma}
Let $\sigma=(\cA,Z)\in\Stab(X)$, and suppose that the support property for $\sigma$ is given by a quadratic form $Q$.
Let
\[
\mathcal U_Q
=
\left\{
Z'\in\Hom(\Kn(X),\C):
Q|_{\Ker Z'}\text{ is negative definite}
\right\},
\]
and denote by $W=W(Q,Z)$ the connected component of $\mathcal U_Q$ containing $Z$.

Let
\[
\Stab(X,Q,\sigma)\subset\Stab(X)
\]
be the connected component of $\Forg^{-1}(W)$ containing $\sigma$. Then:
\begin{enumerate}
\item the restriction $\Forg|_{\Stab(X,Q,\sigma)} \colon \Stab(X,Q,\sigma)\longrightarrow W$ is a covering map;

\item every $\sigma'\in\Stab(X,Q,\sigma)$ satisfies the support property with respect to the same quadratic form $Q$;

\item if $E\in\Db(X)$ is $\sigma$-stable and $Q([E])=0$, then $E$ is $\sigma'$-stable for every $\sigma'\in\Stab(X,Q,\sigma)$.
\end{enumerate}
\end{proposition}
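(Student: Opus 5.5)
The plan is to follow the standard proof of the effective deformation theorem in Bayer--Macr\`i--Stellari, and to organize the three assertions around one local statement. Fix a norm $\|\cdot\|$ on $\Kn(X)_{\R}$. For $Z'\in W$, negative definiteness of $Q$ on $\Ker Z'$ gives a constant $C(Z')>0$ with $\|v\|\le C(Z')\,|Z'(v)|$ for every $v$ satisfying $Q(v)\ge0$. By continuity, and because $W$ is an open component of $\mathcal U_Q$, this constant can be chosen locally uniform on $W$.

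\textbf{Step 1: local lifting.} The key local statement is this. Suppose $\sigma'=(\cA',Z')\in\Forg^{-1}(W)$ satisfies the support property with respect to $Q$, and $Z''$ lies in a small ball (measured in the operator norm relative to $C(Z')$) around $Z'$ that is contained in $W$. Then Bridgeland's deformation theorem produces a unique lift $\sigma''$ near $\sigma'$ with central charge $Z''$. Moreover, $\sigma''$ again satisfies the support property for the same $Q$.

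The lift and its uniqueness come directly from Bridgeland's theorem, once the support property supplies the required estimate $|Z''(E)-Z'(E)|\le\epsilon|Z'(E)|$ on $\sigma'$-semistable objects. The harder part is that $Q$ persists. A $\sigma''$-semistable object $E$ has a $\sigma'$-HN filtration whose factors have phases in a small interval. The plan is to show $Q([E])\ge0$ by a convexity argument. The set $\{v: Q(v)\ge0,\ Z''(v)\in \text{narrow sector}\}$ is a convex cone, because $Q$ is negative definite on $\Ker Z''$. The classes of the $\sigma'$-factors lie in this cone, so their sum $[E]$ does as well. Here I would argue, as in BMS Lemma A.7, that $Q$ restricted to the preimage of a ray is of signature $(1,\ast)$ with negative part $\Ker$, so the nonnegative part is a union of two convex cones and the sector picks one of them.

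\textbf{Step 2: covering and (ii).} Step 1 shows that $\Forg$ restricted to the component $\Stab(X,Q,\sigma)$ has the path-lifting property for paths in $W$. Every path in $W$ can be subdivided into pieces that lie in the locally uniform balls, and each piece is lifted by Step 1. A local homeomorphism with unique path lifting and local uniform lifting is a covering map, which gives (i). Assertion (ii) follows by induction along lifted paths from $\sigma$: $\Stab(X,Q,\sigma)$ is path-connected, and every point of it is reached by lifting a path in $W$.

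\textbf{Step 3: assertion (iii).} Let $E$ be $\sigma$-stable with $Q([E])=0$. The locus where $E$ is stable is open, so it suffices to show it is also closed in $\Stab(X,Q,\sigma)$. Suppose $E$ becomes strictly semistable at some boundary point $\sigma'$. Then $E$ has Jordan--H\"older factors $A_i$ with $Q([A_i])\ge0$, all of whose $Z'$-values lie on a single ray. By the cone structure above, $Q$ restricted to the span of classes on a ray is a form of signature $(1,\ast)$ whose negative part is the kernel, with $[E]=\sum[A_i]$. The reverse Cauchy--Schwarz inequality then gives $Q([E])\ge\sum Q([A_i])+2\sum_{i<j}Q(A_i,A_j)$, where the cross terms are positive unless all the $[A_i]$ are proportional to isotropic vectors. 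This forces $Q([E])>0$, a contradiction, unless all the $[A_i]$ are proportional with $Q=0$. The main obstacle is to rule out this proportional isotropic case. I would handle it by showing that $E$ being stable near $\sigma'$ while $A_1$ is a subobject with proportional class gives equal phases on both sides of $\sigma'$. This contradicts the stability of $E$ on the open side, since a destabilizing subobject of the same phase persists there.
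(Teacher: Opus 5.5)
The paper does not prove this statement; it quotes it from \cite[Propositions~A.5 and A.8]{BMS16}. Your overall architecture is the right one: local lifting over balls whose size depends only on $Z'$, persistence of $Q$, covering via path lifting, and an open--closed argument for (iii). Step~3 is essentially sound once (ii) is available. At a boundary point $\sigma'$ the object $E$ is semistable because the semistable locus is closed. Its Jordan--H\"older classes lie over one ray of $Z'$, so the Lorentzian argument forces them to be positively proportional isotropic vectors. Then $A_1$, which is stable near $\sigma'$, has the same phase as $E$ at nearby points where $E$ is stable, and the nonzero map $A_1\to E$ between non-isomorphic stable objects of equal phase gives the contradiction. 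The genuine gap is in Step~1, i.e.\ in your proof of (ii). Both (i) and (iii) depend on it: for (i) the ball size must be uniform over all points of $\Stab(X,Q,\sigma)$ above $Z'$, and (iii) needs $Q([A_i])\ge0$ at $\sigma'$.

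The assertion that $\{v:\ Q(v)\ge0,\ Z''(v)\in\text{narrow sector}\}$ is a convex cone is false. The Lorentzian argument behind \cite[Lemma~A.6]{BMS16} works only on the hyperplane $(Z'')^{-1}(\R z)$ over a single ray. There $\Ker Z''$ has codimension one, so $Q$ has at most one nonnegative direction. The preimage of a sector of positive width is instead open in $\Kn(X)_{\R}$, where $Q$ can have two positive directions (as $\Delta$ does), and $\{Q\ge0\}$ is not convex. For instance, on $\R^3$ take $Q=x^2+y^2-z^2$ and $Z''=x+iy$. Then $Q$ is negative definite on $\Ker Z''$, and $v_1=(1,0,1)$, $v_2=(\cos\theta,\sin\theta,1)$ are isotropic with $Z''(v_1)=1$ and $Z''(v_2)=e^{i\theta}$. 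Yet $Q(v_1+v_2)=2\cos\theta-2<0$ for every $\theta\neq0$. Since the $\sigma'$-HN factors of a $\sigma''$-semistable object genuinely have different phases, Step~1 does not yield $Q([E])\ge0$.

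The missing idea is to apply ray-convexity only at walls, where all Jordan--H\"older factors have the same phase. Take the segment $\sigma_t$ from $\sigma'=\sigma_0$ to $\sigma''=\sigma_1$. The deformation estimates of \cite{Bridgeland07}, together with $\|v\|\le C(Z')\,|Z'(v)|$ for $\sigma'$-semistable classes, show that the classes of $\sigma_t$-semistable objects of bounded $|Z_t|$ form a finite set. Hence only finitely many walls $0<t_1<\dots<t_n$ for these classes meet the segment. Now induct over chambers starting from $t=0$. An object semistable in a chamber is semistable at the left endpoint, by closedness. If that endpoint is a wall, its Jordan--H\"older factors there are stable in a neighbourhood, hence semistable in the preceding chamber, so they satisfy $Q\ge0$ by induction. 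Their classes lie over a single ray, so the ray lemma gives $Q\ge0$ for the object. With (ii) established this way, your Steps~2 and~3 go through.
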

\subsection{Stability conditions on an abelian surface}\label{sec:stabonabsurf}

Let $(X,L)$ be a polarized complex abelian surface. We recall some basic facts and the standard construction of a real two-dimensional family of stability conditions obtained by tilting $\Coh(X)$; see \cite[Section~6]{Bridgeland:K3} and \cite{AB12}. 

For $v=(r,D,u), w=(r',D',u')$ in $\Kn(X)$, define the Mukai pairing and the discriminant by
\begin{equation*}
\langle v,w\rangle = D\cdot D'-ru'-r'u, \qquad \Delta(v) = \langle v,v\rangle = D^2-2ru.
\end{equation*}
By Riemann--Roch,
\[
\chi(E,F)=-\langle [E],[F]\rangle, \qquad  \Delta(E)=-\chi(E,E).
\]
Since the intersection form on an abelian surface is even, $\Delta(E)$ is an even integer.

We also record a parity observation that will be useful below. Suppose that $E\in\Db(X)$ is stable with respect to a Bridgeland stability condition. Then stability gives
\[
\Hom(E,E)\simeq\mathbb C, \qquad \text{ and } \Hom(E,E[k])=0 \qquad\text{for }k<0.
\]
By Serre duality, we have 
\[
\chi(E,E)=2-\dim\Ext^1(E,E).
\]
Since $\chi(E,E)=-\Delta(E)$ is even, it follows that
\begin{equation}\label{eq:ext1even}
\dim\Ext^1(E,E) \quad\text{is even}.
\end{equation}

For $s\in\mathbb R$, define the $L$-slope of a coherent sheaf $E$ of positive rank by
\[
\mu_L(E) = \frac{L\cdot\ch_1(E)}{L^2\ch_0(E)},
\]
and set $\mu_L(E)=+\infty$ for every nonzero torsion sheaf. Define
\begin{align*}
\Coh_L^{>s}(X) &:= \left\{ E\in\Coh(X):  \mu_L(F)>s \text{ for every nonzero quotient }E\twoheadrightarrow F \right\},\\
\Coh_L^{\leq s}(X) &:= \left\{ E\in\Coh(X): \mu_L(F)\leq s \text{ for every nonzero subsheaf }F\hookrightarrow E \right\}.
\end{align*}
Then $\bigl(\Coh_L^{>s}(X),\Coh_L^{\leq s}(X)\bigr)$ is a torsion pair in $\Coh(X)$; see \cite{Happel-al:tilting}. Its tilted heart is
\[
\cA_s := \left\langle \Coh_L^{>s}(X), \Coh_L^{\leq s}(X)[1] \right\rangle.
\]

For $t>0$, define
\begin{equation*}
Z_{s,t}(E) = -\left( \ch_2(E)-sL\cdot\ch_1(E) +\frac{s^2-t^2}{2}L^2\ch_0(E) \right) +i\left( L\cdot\ch_1(E)-sL^2\ch_0(E) \right).
\end{equation*}
We write $\sigma_{s,t}:=(\cA_s,Z_{s,t})$ and $\nu_{s,t}$ for the slope $\mu_{Z_{s,t}}$.

\begin{proposition}\label{prop:stabsigma01}
The data $\sigma_0=(\cA_0,Z_{0,\sqrt2})$ is a stability condition on $\Db(X)$ satisfying the support property with respect to the quadratic form $\Delta$.
\end{proposition}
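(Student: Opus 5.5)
The plan is to verify three things for $\sigma_0=(\cA_0,Z_{0,\sqrt2})$: that $Z_{0,\sqrt2}$ is a stability function on $\cA_0$, that Harder--Narasimhan filtrations exist, and that the support property holds with $Q=\Delta$. Throughout I will use the Bogomolov inequality for $\mu_L$-semistable sheaves, together with the fact that an abelian surface has no curves of negative self-intersection.

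\emph{Stability function.} Substituting $s=0$, $t=\sqrt2$ and $L^2=2d$ gives
\[
Z_{0,\sqrt2}(r,D,u)=-(u-2dr)+i\,L\cdot D .
\]
For $E\in\Coh_L^{>0}(X)$ we have $\Im Z\geq0$. Equality forces $E$ to be a torsion sheaf supported in dimension zero, and then $\Re Z=-u<0$.

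For $E=F[1]$ with $F\in\Coh_L^{\leq0}(X)$ we again have $\Im Z\ge0$. If $\Im Z=0$, then every HN factor of $F$ has $\mu_L=0$. Each such factor $F_i=(r_i,D_i,u_i)$ satisfies $L\cdot D_i=0$, so the Hodge index theorem gives $D_i^2\le0$. The Bogomolov inequality then gives $u_i\le D_i^2/(2r_i)\le0$. Hence $\Re Z(F[1])=-(2dr-u)<0$, because $r>0$.

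\emph{HN property.} The imaginary part $L\cdot\ch_1$ takes values in $\Z$, and the real part also takes values in $\Z$. So the image of $Z_{0,\sqrt2}$ is a discrete lattice in $\C$. Under these conditions $\cA_0$ is Noetherian and the HN property follows by the standard argument for tilted hearts with discrete central charge (Bridgeland's K3 paper, Prop.~7.1, and \cite{AB12}). I would simply cite this.

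\emph{Support property, negative definiteness.} On $\Ker Z_{0,\sqrt2}$ we have $u=2dr$ and $L\cdot D=0$. Therefore
\[
\Delta=D^2-2ru=D^2-4dr^2 .
\]
The Hodge index theorem gives $D^2\le0$, with equality only when $D\equiv0$. If $\Delta=0$ on this kernel, then $D=0$ and $r=0$, hence $u=0$. So $\Delta|_{\Ker Z}$ is negative definite.

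\emph{Support property, nonnegativity on semistable objects (main obstacle).} It remains to show $\Delta(E)\ge0$ for every $\sigma_0$-semistable $E$. This is the Bogomolov--Gieseker type inequality for tilt-semistable objects, and I expect it to be the main step. The plan has three parts.
\begin{enumerate}
\item Move along the ray $\sigma_{0,t}$ for $t\gg0$. In that large-volume limit, the semistable objects are $\mu_L$-semistable torsion-free sheaves (Gieseker-type), shifts $F[1]$ of such sheaves, or torsion sheaves. For these, $\Delta\ge0$: in the torsion-free case it is Bogomolov's inequality $\Delta=2rc_2-(r-1)c_1^2\ge0$. In the torsion case $\Delta=D^2\ge0$, since effective curves on an abelian surface have nonnegative square.
\item Decrease $t$ down to $\sqrt2$. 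Along the whole path, $\Delta$ stays negative definite on $\Ker Z_{0,t}$, by the same Hodge index computation with $u=t^2dr$.
\item Use the standard convexity argument (\cite[Lemma~A.7]{BMS16}; Bayer--Macr\`i--Toda). When an object destabilizes at a wall, its class is a sum of classes of semistable factors of the same phase. Since $\Delta$ is negative definite on the kernel, the set of classes with $\Delta\ge0$ whose $Z$ lies on a fixed ray is convex. Hence $\Delta(\text{factors})\ge0$ implies $\Delta(E)\ge0$.
\end{enumerate}
Since walls are locally finite by the discreteness of $Z$, induction on $\Im Z$ (or on $\Delta$) along the path transfers the inequality to $t=\sqrt2$.
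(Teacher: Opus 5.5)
Your proof is correct in outline but takes a genuinely different route from the paper at the one substantive step; three points need repair before it is complete.

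\textbf{Where the two proofs agree.} Your treatment of the stability function, the HN property and the negative definiteness of $\Delta$ on $\Ker Z_{0,\sqrt2}$ matches the paper, which simply cites the standard construction for the first two.

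\textbf{Where they diverge.} The difference is the inequality $\Delta(E)\ge0$ for $\sigma_0$-semistable $E$.

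The paper proves this at $\sigma_0$ itself, without deforming anything. A $\sigma_0$-stable $E$ has $\Hom(E,E)=\C$ and no negative self-Exts. Serre duality with $\omega_X\iso\ox$ then gives $\chi(E,E)=2-\dim\Ext^1(E,E)$. Since $\chi(E,E)=-\Delta(E)$ is even, and an abelian surface has no nonzero rigid objects \cite[Lemma~15.1]{Bridgeland:K3}, one gets $\dim\Ext^1(E,E)\ge2$, i.e.\ $\Delta(E)\ge0$. Strictly semistable objects then follow from the same convexity lemma you use, applied to Jordan--H\"older factors.

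You instead run the Bayer--Macr\`i--Toda argument: large-volume limit, descent along $s=0$, and induction. Your route needs only the classical Bogomolov inequality and $B^2\ge0$ for curves. It therefore sits closer to general tilt-stability theory and does not use the special homological features of abelian surfaces. The paper's argument is far shorter and needs no wall analysis.

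\textbf{Points to repair.}

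First, local finiteness of walls does not follow from discreteness of $Z$. The central charge $Z_{0,t}$ is not discrete for irrational $t^2$, and the usual local-finiteness results are consequences of a support property, which is what you are proving. It is nonetheless easy to establish here:
\begin{itemize}
\item Along $s=0$ the heart $\cA_0$ is fixed, and $\nu_{0,t}(A)=\bigl(\ch_2(A)-t^2d\ch_0(A)\bigr)/\bigl(L\cdot\ch_1(A)\bigr)$ is affine in $t^2$. So the set of $t\ge t_0$ at which $E$ is semistable is a closed interval.
\item A subobject $A\subset E$ that destabilizes beyond this interval satisfies $0<L\cdot\ch_1(A)<L\cdot\ch_1(E)$ and $-\rk{\mathcal H^{-1}(E)}\le\ch_0(A)<\ch_0(E)\,L\cdot\ch_1(A)/L\cdot\ch_1(E)$.
\item If its wall lies in a bounded $t$-range, then $\ch_2(A)\in\Z$ is bounded as well.
\end{itemize}
Hence only finitely many walls occur in any bounded range, and $E$ is strictly semistable at the top endpoint of its interval.

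Second, the induction must be on $L\cdot\ch_1(E)\in\Z_{\ge0}$, not on $\Delta$, since $\Delta$ is not yet known to be bounded below. This works because Jordan--H\"older factors at a wall of finite slope have strictly smaller $L\cdot\ch_1$. The base case $L\cdot\ch_1=0$ is exactly your stability-function computation.

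Third, large-volume semistable objects of negative rank are not only shifts $F[1]$: $\mathcal H^0(E)$ may be a nonzero zero-dimensional sheaf. The conclusion still holds. In that case $\mathcal H^{-1}(E)$ is $\mu_L$-semistable, and $\Delta(E)=\Delta(\mathcal H^{-1}(E))+2\rk{\mathcal H^{-1}(E)}\cdot\operatorname{length}\mathcal H^0(E)\ge0$.
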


\begin{proof}
That $\sigma_0$ is a stability condition follows from the standard construction recalled above.  We verify that $\Delta$ gives its support property.

We first show that $\Delta$ is negative definite on $\Ker Z_{0,\sqrt2}$.  Let $0\neq v=(r,D,u)\in\Kn(X)\otimes\mathbb R$ satisfy $Z_{0,\sqrt2}(v)=0$.  Then
\[
u=rL^2, \qquad L\cdot D=0.
\]
By the Hodge index theorem, $D^2\leq0$, and hence
\[
\Delta(v) = D^2-2ru = D^2-2r^2L^2 \leq -2r^2L^2 \leq0.
\]
If equality holds, then $r=0$ and $D^2=0$. Since $L\cdot D=0$, the Hodge index theorem forces $D=0$, and then $u=0$.  Thus
\[
\Delta(v)<0 \qquad \text{for every } 0\neq v\in\Ker Z_{0,\sqrt2}.
\]

It remains to prove that $\Delta(E)\geq0$ for every $\sigma_0$-semistable object $E$. Suppose first that $E$ is $\sigma_0$-stable. An abelian surface has no nonzero rigid objects; see \cite[Lemma~15.1]{Bridgeland:K3}. Therefore, $\Ext^1(E,E)\neq0.$ Together with \eqref{eq:ext1even}, this gives $\dim\Ext^1(E,E)\geq2$, and consequently,
\[
\Delta(E) = -\chi(E,E) = \dim\Ext^1(E,E)-2 \geq0.
\]

Now let $E$ be strictly $\sigma_0$-semistable.  Its Jordan--H\"older factors are $\sigma_0$-stable and therefore have nonnegative discriminant. Since $\Delta$ is negative definite on $\Ker Z_{0,\sqrt2}$, \cite[Lemma~A.6]{BMS16} implies that $\Delta(E)\geq 0$. This proves the support property.
\end{proof}

\begin{remark}\label{rem:stabX}
More generally, $\Delta$ is negative definite on $\Ker Z_{s,t}$ for every $s\in\mathbb R$ and $t>0$.  Indeed, for any $v=(r,D,u)\in\Ker Z_{s,t}$, we have $L\cdot D=srL^2, u=(s^2+t^2)rL^2/2$. Writing $D=srL+D_\perp, L\cdot D_\perp=0$, we obtain $\Delta(v) = D_\perp^2-t^2r^2L^2$. By the Hodge index theorem, this is strictly negative unless $v=0$.

The family $\{\sigma_{s,t}\}_{s\in\mathbb R,\;t>0}$ is connected and contains $\sigma_0$.  It therefore follows from Proposition~\ref{propdef:stabQsigma} that the connected component $\Stab(X,\Delta,\sigma_0)$ contains the entire slice
\[
\Stab_L(X) = \{\sigma_{s,t}:s\in\mathbb R,\ t>0\}.
\]
Moreover, every stability condition in this component satisfies the support property with respect to the same quadratic form $\Delta$. In fact, we have $\Stab(X)=\Stab(X,\Delta,\sigma_0)$, see \cite{Dell25,FLZ:ab3,HMS:generic_K3s}. In the subsequent computation of actual walls, however, we shall only use the subspace $\Stab_L(X)$.
\end{remark}

\begin{remark}\label{rem:large-volume}
Let $E\in\Db(X)$ have positive rank and satisfy $\mu_L(E)>s$. The large-volume theorem \cite[Proposition~14.2]{Bridgeland:K3} states that, up to shift, $E$ is $\sigma_{s,t}$-semistable for all sufficiently large $t$ if and only if it is an $sL$-twisted Gieseker-semistable torsion-free sheaf. In particular, the corresponding sheaf is slope-semistable.  We shall use the large-volume limit both to identify the relevant wall factors, after the appropriate shifts, with torsion-free sheaves and to deduce their slope semistability.
\end{remark}

\begin{proposition}\label{prop:semihomogeneousobj}
Let $E\in\Db(X)$ be $\tau$-stable with $\Delta(E)=0$ for some $\tau\in\Stab(X,\Delta,\sigma_0)$. Then $E$ is $\sigma$-stable for every $\sigma\in\Stab(X,\Delta,\sigma_0)$. In particular, every simple semihomogeneous sheaf is stable throughout $\Stab_L(X)$.
\end{proposition}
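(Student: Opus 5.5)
The first assertion follows almost directly from Proposition~\ref{propdef:stabQsigma}(3), applied with $Q=\Delta$. Let $\tau\in\Stab(X,\Delta,\sigma_0)$ be a stability condition for which $E$ is stable. By Proposition~\ref{propdef:stabQsigma}(2), $\tau$ satisfies the support property with respect to $\Delta$. The only thing to check is that the connected component of $\Forg^{-1}(W)$ containing $\tau$ coincides with $\Stab(X,\Delta,\sigma_0)$.

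Here $W=W(\Delta,Z_\tau)$ is the component of $\mathcal U_\Delta$ containing $Z_\tau$. Since $\tau$ lies in the component through $\sigma_0$, the central charge $Z_\tau$ lies in $W(\Delta,Z_{0,\sqrt2})$. Hence $W(\Delta,Z_\tau)=W(\Delta,Z_{0,\sqrt2})$, and therefore $\Stab(X,\Delta,\tau)=\Stab(X,\Delta,\sigma_0)$. Proposition~\ref{propdef:stabQsigma}(3), applied to $\tau$, now shows that $E$ is $\sigma$-stable for every $\sigma$ in this component.

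For the second assertion, let $\shf E$ be a simple semihomogeneous vector bundle. By \eqref{eq:2.1}, $\ch(\shf E)=n e^{\delta}$ with $\delta=\delta(\shf E)$, so $\ch_2=\tfrac n2\delta^2$ and $\ch_1=n\delta$. This gives
\[
\Delta(\shf E)=n^2\delta^2-2n\cdot \tfrac n2\delta^2=0.
\]
By the first part, it suffices to find a single $\tau\in\Stab_L(X)\subset\Stab(X,\Delta,\sigma_0)$ at which $\shf E$ (or its shift) is stable. The inclusion $\Stab_L(X)\subset\Stab(X,\Delta,\sigma_0)$ is Remark~\ref{rem:stabX}. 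I would choose $s<\mu_L(\shf E)$ and $t\gg0$, and invoke the large-volume limit (Remark~\ref{rem:large-volume}, in its stable version from \cite[Proposition~14.2]{Bridgeland:K3}). This reduces the problem to showing that $\shf E$ is $sL$-twisted Gieseker stable.

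By \cite[Proposition~6.13]{Mukai78}, semihomogeneous bundles are Gieseker semistable. I expect the main obstacle to be upgrading this to stability, and upgrading semistability in the large-volume chamber to Bridgeland stability. For the first upgrade I would use simplicity together with Mukai's description: a simple semihomogeneous bundle is $\pi_*$ of a line bundle under an isogeny, and its semistable Jordan--H\"older factors are semihomogeneous with the same $\delta$. A proper saturated subsheaf of the same reduced Hilbert polynomial would then be semihomogeneous with the same $\delta$. Mukai's results \cite[Prop.~6.18]{Mukai78} express $\shf E$ as a successive extension of copies of a single simple one, and simplicity then forces $\shf E$ to be that one.

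For the Bridgeland upgrade, I would argue as follows. Suppose $\shf E$ were strictly $\sigma_{s,t}$-semistable for large $t$. Its Jordan--H\"older factors $E_i$ would have the same phase, and each would have $\Delta(E_i)\ge0$. Since their classes are proportional in the limit, the inequality $\Delta\ge0$ together with $\Delta(\shf E)=0$ forces $\Delta(E_i)=0$ with classes proportional to $v(\shf E)$. This contradicts the twisted Gieseker stability of $\shf E$. Alternatively, one can avoid this step by taking $\tau$ as any point where $\shf E$ is stable, using openness of the stable locus near the large-volume limit.

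For a general simple semihomogeneous sheaf, the torsion-free case is always locally free and semihomogeneous, so it is covered by the argument above.
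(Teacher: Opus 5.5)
Your proof follows the paper's. The first assertion is Proposition~\ref{propdef:stabQsigma}(3) applied with $\tau$ as base point; your check that $\Stab(X,\Delta,\tau)=\Stab(X,\Delta,\sigma_0)$ spells out what the paper leaves implicit. The second assertion goes the same way in both: $\Delta=0$ from \eqref{eq:2.1}, then Gieseker stability, then the large-volume limit at some $\sigma_{s,t}$ with $t\gg0$, then the first assertion.

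The step you call the main obstacle is handled in the paper by citing \cite[Proposition~6.16]{Mukai78}: a simple semihomogeneous bundle is Gieseker stable. Twisting by $sL$ changes nothing, since it only shifts the Hilbert polynomial.

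Your own sketch of this step does not close as written. Applying \cite[Proposition~6.18]{Mukai78} to the simple bundle $\shf E$ just returns $\shf E$ itself, and nothing links a hypothetical destabilizing subsheaf to a non-scalar endomorphism. A self-contained version would run as follows:
\begin{itemize}
\item Bogomolov plus the Hodge index theorem force every Jordan--H\"older factor to satisfy $\ch(G_i)=r_ie^{\delta}$.
\item For non-isomorphic factors, $\chi(G_i,G_j)=0$ and $\Hom=\Ext^2=0$, so $\Ext^1(G_i,G_j)=0$ and $\shf E$ would decompose.
\item If all factors are isomorphic and there are at least two, the composite $\shf E\twoheadrightarrow G_k\simeq G_1\hookrightarrow\shf E$ is a non-scalar element of $\Hom(\shf E,\shf E)$.
\end{itemize}
Either way simplicity is contradicted.

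Your second upgrade, from large-volume semistability to Bridgeland stability, is unnecessary. \cite[Proposition~14.2]{Bridgeland:K3} already passes directly from twisted Gieseker stability to $\sigma_{s,t}$-stability for $t\gg0$. Its justification as you wrote it (classes ``proportional in the limit'') is also imprecise; proportionality would come from the Lorentzian argument of Proposition~\ref{prop:twojhfactors} at a fixed $(s,t)$. Finally, the alternative you mention via openness of the stable locus presupposes stability at some point, so it is circular.
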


\begin{proof}
Since $\tau$ belongs to $\Stab(X,\Delta,\sigma_0)$, the first assertion follows from Proposition~\ref{propdef:stabQsigma}(3), applied with $\tau$ as the base point.

Now let $E$ be a simple semihomogeneous sheaf. By \eqref{eq:2.1}, $\ch(E)=\rk{E}e^{\delta(E)}$, and hence $\Delta(E)=0$.

Moreover, $E$ is Gieseker-stable with respect to $L$ by \cite[Proposition~6.16]{Mukai78}.  Choosing $s<\mu_L(E)$, the large-volume theorem shows that $E$ is $\sigma_{s,t}$-stable for $t\gg0$.  The first assertion then implies that $E$ remains stable throughout $\Stab(X,\Delta,\sigma_0)$, and in particular throughout $\Stab_L(X)$.
\end{proof}

\section{The wall of \texorpdfstring{$\is q$}{Iq}}
\label{sec:first-wall}
\subsection{Stable factors of $\is q$}
We now determine the stable factors on an actual wall of the ideal sheaf of a point.  The use of the ordinary discriminant is essential.

\begin{proposition}\label{prop:twojhfactors}
Let $\sigma\in\Stab(X,\Delta,\sigma_0)$, and let $E\in\Db(X)$ satisfy $\Delta(E)=2$.  If $E$ is strictly $\sigma$-semistable, then it has exactly two $\sigma$-stable Jordan--H\"older factors $F$ and $Q$.  Moreover,
\[
\Delta(F)=\Delta(Q)=0, \qquad \langle\ch(F),\ch(Q)\rangle=-\chi(F,Q)=1,
\]
and
\[
\dim\Ext^1(F,Q)=\dim\Ext^1(Q,F)=1.
\]
\end{proposition}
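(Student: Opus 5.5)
We will work with the Jordan--H\"older factors of $E$ and use the support property for $\Delta$ on $\Stab(X,\Delta,\sigma_0)$, which holds by Remark~\ref{rem:stabX}. Let $E_1,\dots,E_m$, with $m\ge2$, be the $\sigma$-stable Jordan--H\"older factors listed with multiplicity, and write $v_i=\ch(E_i)$. All $v_i$ have the same phase, so their images $Z(v_i)$ lie on a single ray. The proof of Proposition~\ref{prop:stabsigma01} shows that every stable object $A$ satisfies $\Delta(A)=\dim\Ext^1(A,A)-2\ge0$, with $\Delta(A)$ even. For $i\neq j$ we have $\Hom(E_i,E_j)=0$ and $\Hom(E_j,E_i[2])=\Hom(E_i,E_j)^\vee=0$ by Serre duality, unless $E_i\iso E_j$. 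Hence $\langle v_i,v_j\rangle=-\chi(E_i,E_j)=\dim\Ext^1(E_i,E_j)\ge0$ when $E_i\not\iso E_j$. If $E_i\iso E_j$, then $\langle v_i,v_j\rangle=\Delta(v_i)\ge0$. Expanding,
\[
2=\Delta(E)=\sum_i\Delta(v_i)+\sum_{i\ne j}\langle v_i,v_j\rangle ,
\]
and every term on the right is nonnegative.

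The key step is to show that all cross terms $\langle v_i,v_j\rangle$ with $i\neq j$ are positive. I would use the standard fact behind \cite[Lemma~A.6]{BMS16}. Since $\Delta$ is negative definite on $\Ker Z$, its restriction to $Z^{-1}(\mathbb R_{\ge0}\cdot z)$ has signature $(1,\ast)$ with negative part $\Ker Z$. Consequently, two vectors $v,w$ in this preimage with $\Delta(v),\Delta(w)\ge0$ lie in the same closed half of the positive cone, and the reverse Cauchy--Schwarz inequality gives $\langle v,w\rangle\ge\sqrt{\Delta(v)\Delta(w)}\ge0$. Equality $\langle v,w\rangle=0$ can only occur when $\Delta(v)=\Delta(w)=0$ and $v,w$ are proportional modulo $\Ker Z$; since they lie on the same ray, this forces $v$ to be a positive multiple of $w$ plus an element $k\in\Ker Z$, and then $0=\Delta(v-\lambda w)=\Delta(k)$ gives $k=0$. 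I expect this to be the main obstacle: it rules out two non-isomorphic stable factors with proportional classes and $\langle v_i,v_j\rangle=0$. I would handle it as follows. If $v_i=\lambda v_j$ with both stable of $\Delta=0$, apply Proposition~\ref{prop:semihomogeneousobj}: both objects stay stable on all of $\Stab(X,\Delta,\sigma_0)$, and in particular in the large volume limit they are semihomogeneous sheaves of the same slope. Primitivity of classes of simple semihomogeneous sheaves then gives $\lambda=1$. Even without settling this, the proportional case is harmless in what follows, because it would make $v$ a multiple of the class $v_j$, which has $\Delta=0$, contradicting $\Delta(E)=2$.

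Granting this, the counting goes as follows. With $m\ge2$ factors, the sum of cross terms is $2\sum_{i<j}\langle v_i,v_j\rangle$, which must be at most $2$. So at most one unordered pair has a nonzero pairing, and that pairing equals $1$. If $m\ge3$, some pair would pair to zero, so all such classes would be proportional and of $\Delta=0$. Then every $v_i$ lies on a single line through a $\Delta$-isotropic class, and $\Delta(E)=0$, a contradiction. Hence $m=2$, $\langle v_1,v_2\rangle=1$, and $\Delta(v_1)+\Delta(v_2)=0$, which forces both to vanish. If $E_1\iso E_2$, then $\langle v_1,v_2\rangle=\Delta(v_1)=0$, again a contradiction. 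So the two factors $F,Q$ are non-isomorphic. Finally, $\dim\Ext^1(F,Q)=-\chi(F,Q)=1$ because the $\Hom$ and $\Ext^2$ terms vanish as shown above, and symmetrically $\dim\Ext^1(Q,F)=1$.
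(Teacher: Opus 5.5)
Your argument is correct and is essentially the paper's: nonnegativity of all pairings, the expansion $2=\sum_i\Delta(v_i)+2\sum_{i<j}\langle v_i,v_j\rangle$, and the Lorentzian fact that nonzero vectors in one closed cone pair to zero only if both are isotropic and proportional. The small differences are that you get $\langle v_i,v_j\rangle=\dim\Ext^1(E_i,E_j)\geq0$ homologically rather than from \cite[Lemma~A.6]{BMS16}, and you explicitly exclude a factor with $\Delta=2$ and isomorphic factors, which the paper leaves implicit. The only thing to drop is the aside that tries to make all cross terms positive via primitivity: for general semistable objects this is false (for instance $\ox$ and a nontrivial $P_\alpha$ are nonisomorphic stable objects with the same class and pairing $0$), and, as you note, the counting argument does not use it.
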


\begin{proof} 
Let $E_1,\ldots,E_n$, with $n\geq2$, be the stable Jordan--H\"older factors.  Their numerical classes have the same phase. The Lorentzian linear algebra underlying the support property \cite[Lemma~A.6]{BMS16} shows that these classes lie in the same component of the nonnegative cone in the codimension-one subspace determined by that phase. Hence
\[
\langle\ch(E_i),\ch(E_j)\rangle\geq0 \qquad\text{for all }i,j.
\]
The diagonal terms are nonnegative even integers, and \[ 2=\Delta(E)  =\sum_i\Delta(E_i)   +2\sum_{i<j}\langle\ch(E_i),\ch(E_j)\rangle. \] 

It follows that every $E_i$ is isotropic and that the sum of the off-diagonal pairings is one. Thus precisely one pair has pairing one and every other off-diagonal pairing is zero.  In a Lorentzian space, two nonzero isotropic vectors in the same cone have pairing zero only when they are proportional.  A third factor would therefore be proportional to both members of the distinguished pair, which is impossible because their pairing is one. Hence $n=2$.

The two stable factors have the same phase and are nonisomorphic, so the $\Hom$ groups in both directions vanish.  Serre duality gives the same vanishing in degree two. Since $\chi(F,Q)=-1$, the two assertions about $\Ext^1$ follow.
\end{proof}

\begin{lemma}\label{lem:Delta0sheaf}
Let $E$ be a torsion-free $\mu_L$-semistable sheaf on $X$. If $\Delta(E)=0$, then $E$ is locally free.
\end{lemma}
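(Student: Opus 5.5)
The plan is to compare $E$ with its reflexive hull, using the Bogomolov inequality to force the quotient to vanish. Since $E$ is torsion-free on a smooth surface, the double dual $E^{\vee\vee}$ is locally free. The natural inclusion gives an exact sequence
\[
0\longrightarrow E\longrightarrow E^{\vee\vee}\longrightarrow T\longrightarrow 0,
\]
where $T$ is supported in dimension zero. Write $r=\rk{E}$ and $n=\operatorname{length}(T)\geq0$.

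The first step is to record how the Chern character changes. We have $\ch_0(E^{\vee\vee})=\ch_0(E)$, $\ch_1(E^{\vee\vee})=\ch_1(E)$ and $\ch_2(E^{\vee\vee})=\ch_2(E)+n$. Substituting into $\Delta=D^2-2ru$ gives
\[
\Delta(E^{\vee\vee})=\Delta(E)-2rn=-2rn .
\]

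The second step is to check that $E^{\vee\vee}$ is still $\mu_L$-semistable. Any saturated subsheaf $F\subset E^{\vee\vee}$ meets $E$ in a subsheaf $F\cap E$ with $F/(F\cap E)\hookrightarrow T$ zero-dimensional. Hence $\ch_0$ and $\ch_1$ of $F\cap E$ and of $F$ agree, so $\mu_L(F)=\mu_L(F\cap E)\leq\mu_L(E)=\mu_L(E^{\vee\vee})$.

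The third step is to apply the Bogomolov inequality to the $\mu_L$-semistable sheaf $E^{\vee\vee}$ on a surface, which gives $\Delta(E^{\vee\vee})\geq 0$. Note that our $\Delta$ equals the usual discriminant $2r\,c_2-(r-1)c_1^2$, since $\ch_1^2-2r\ch_2=c_1^2-r(c_1^2-2c_2)$. Alternatively, one could use the support property of Proposition~\ref{prop:stabsigma01} together with the large volume limit of Remark~\ref{rem:large-volume}, but Bogomolov is the cleanest route. Combining with the first step, $-2rn\geq0$ with $r>0$, so $n=0$. Thus $T=0$ and $E=E^{\vee\vee}$ is locally free.

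The only point needing care is the semistability of the double dual in the second step, and that is routine. I expect no real obstacle.
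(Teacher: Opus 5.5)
Your proposal is correct and follows the paper's proof: pass to the reflexive hull, note that it remains $\mu_L$-semistable because a subsheaf and its intersection with $E$ have the same rank and first Chern class, and then play $\Delta(E^{\vee\vee})=\Delta(E)-2\,\mathrm{rk}(E)\,\mathrm{length}(T)$ off against the Bogomolov inequality. Your extra check that $\Delta$ agrees with the usual discriminant $2rc_2-(r-1)c_1^2$ is a harmless addition that the paper leaves implicit.
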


\begin{proof} 
The reflexive hull $E^{\vee\vee}$ is again $\mu_L$-semistable: a destabilizing subsheaf would intersect $E$ in a subsheaf of the same rank and first Chern class and would therefore destabilize $E$.  If $T=E^{\vee\vee}/E$ had positive length, then
\[
\Delta(E^{\vee\vee})=\Delta(E)-2\rk{E}\operatorname{length}(T)<0,
\]
contrary to the Bogomolov inequality. Thus $T=0$.
\end{proof}

\begin{proposition}\label{prop:wallIq}
Suppose that $\is q$ is strictly $\sigma_{s,t}$-semistable at an actual wall in the half-plane $s<0$.  Order its stable factors so that there is a short exact sequence
\begin{equation}\label{eq:wallIq}
0\longrightarrow\shf F\longrightarrow\is q \longrightarrow\shf Q\longrightarrow0
\end{equation}
in $\cA_s$. Then $\shf F$ and $\shf Q$ are stable throughout $\Stab_L(X)$.  Writing
\[
\ch(\shf F)=(r,-C,u),
\]
one has
\begin{equation}\label{eq:wallIqch}
r\geq1,\qquad u=r-1.
\end{equation}
More precisely:
\begin{enumerate}
\item[(i)] If $r=1$, then
\begin{equation}\label{eq:wallIqrk1}
\shf F\simeq\ox(-C),\qquad 0\neq C\geq0,\qquad C^2=0, \qquad \shf Q\simeq\is q/\ox(-C)=:\mathscr I_{q,C}.
\end{equation}
\item[(ii)] If $r\geq2$, then $\shf Q\simeq\shf G[1]$, and
\eqref{eq:wallIq} is induced by a short exact sequence of coherent
sheaves
\begin{equation}\label{eq:wallIqshf}
0\longrightarrow\shf G\longrightarrow\shf F
\longrightarrow\is q\longrightarrow0.
\end{equation}
The sheaves $\shf F$ and $\shf G$ are simple semihomogeneous vector bundles with
\begin{equation}\label{eq:wallIqcharac}
\ch(\shf F)=(r,-C,r-1),
\qquad
\ch(\shf G)=(r-1,-C,r),
\end{equation}
and
\begin{equation}\label{eq:C2}
C^2=2r(r-1).
\end{equation}
In this case $L\cdot C>0$, and $C$ is ample.
\end{enumerate}
\end{proposition}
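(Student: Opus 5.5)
Since $\Delta(\is q)=2$, we apply Proposition~\ref{prop:twojhfactors}. At the wall, $\is q$ is strictly $\sigma_{s,t}$-semistable, so it has exactly two stable Jordan--H\"older factors $\shf F$ and $\shf Q$, both with $\Delta=0$ and with $\langle\ch\shf F,\ch\shf Q\rangle=1$. Proposition~\ref{prop:semihomogeneousobj} then shows that both factors are stable throughout $\Stab(X,\Delta,\sigma_0)$, hence throughout $\Stab_L(X)$. Write $\ch(\shf F)=(r,-C,u)$, so that $\ch(\shf Q)=(1-r,C,-1-u)$. Isotropy of $\shf F$ gives $C^2=2ru$. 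The pairing condition gives $1=-C^2+r(1+u)+(r-1)u$, which simplifies to $r-u=1$, i.e. $u=r-1$.

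Next we show $r\ge1$. The subobject $\shf F\in\cA_s$ is a subobject of the sheaf $\is q$. Taking the long exact cohomology sequence, $\mathcal H^{-1}(\shf F)\hookrightarrow\mathcal H^{-1}(\is q)=0$, so $\shf F$ is a sheaf in $\Coh^{>s}_L$. If $r=0$, then $\shf F$ would be a torsion subsheaf-like object. The sequence $0\to\mathcal H^{-1}(\shf Q)\to\shf F\to\is q$ would then force $\shf F$ to be torsion-free modulo something, and we can exclude this because $\Delta(\shf F)=C^2=0$ together with $Z$-phase considerations at $s<0$: a torsion sheaf has phase one, larger than that of $\is q$ for $t$ small. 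The alternative $r<0$ is impossible for a sheaf. So $r\ge1$.

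\textbf{Case $r=1$.} The map $\shf F\to\is q$ has kernel $\mathcal H^{-1}(\shf Q)\in\Coh^{\le s}_L$, which is torsion-free of rank $0$ (since the rank of $\shf F$ is $1$ and the image is nonzero), hence zero. So $\shf F\subset\is q$ is a rank one torsion-free subsheaf with $\Delta=0$, i.e. $\shf F\simeq\ox(-C)\otimes I_Z$ with $Z=\emptyset$ (the discriminant computation, or Lemma~\ref{lem:Delta0sheaf}). Therefore $C\ge0$ is effective, $C\ne0$ because the wall is actual, $C^2=0$, and $\shf Q=\is q/\ox(-C)$.

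\textbf{Case $r\ge2$.} Put $\shf G=\mathcal H^{-1}(\shf Q)$. Then $\mathcal H^0(\shf Q)$ is a quotient of $\is q$ lying in $\Coh^{>s}_L$. We show $\mathcal H^0(\shf Q)=0$, which gives \eqref{eq:wallIqshf}. This is the main obstacle. My approach: the image of $\shf F$ in $\is q$ is a nonzero subsheaf $\is q\otimes\ox(-D)$-type object. Using $\ch(\shf Q)=(1-r,C,-r)$ with $r\ge2$, the object $\shf Q[-1]$ has positive rank. Stability of $\shf Q$ for all $\sigma_{s',t}$, with $t\gg0$ and $s'<\mu_L$ suitably chosen, combined with Remark~\ref{rem:large-volume} (applied to $\shf Q[-1]$ in the region $s'>\mu_L(\shf Q[-1])$ where it is a shifted object), identifies $\shf Q[-1]$ with a torsion-free slope-semistable sheaf. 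This forces $\mathcal H^0(\shf Q)=0$. Similarly, applying the large-volume limit to $\shf F$ shows that $\shf F$ is a slope-semistable torsion-free sheaf. Lemma~\ref{lem:Delta0sheaf} then makes both $\shf F$ and $\shf G$ locally free. Since both are $\Delta=0$ stable bundles, Mukai's characterization makes them simple semihomogeneous. The Chern characters follow from $\ch(\shf G)=\ch(\shf F)-\ch(\is q)=(r-1,-C,r)$, and $C^2=2r(r-1)$ is isotropy.

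Finally, for positivity: $\shf F\in\Coh^{>s}_L$ with $s<0$, and $\shf G[1]\in\cA_s$ requires $\mu_L(\shf G)\le s<0$, so $L\cdot C>0$. Then $C^2>0$ and Lemma~\ref{lem:positivecone} give that $C$ is ample.
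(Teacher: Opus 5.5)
Your outline matches the paper's. Proposition~\ref{prop:twojhfactors} and Proposition~\ref{prop:semihomogeneousobj} give two isotropic factors that are stable on the whole slice. The large-volume limit shows that $\shf Q[-1]$ and $\shf F$ are torsion-free sheaves, and Lemma~\ref{lem:Delta0sheaf} plus Mukai's theorem finish. Getting $u=r-1$ from $\langle\ch\shf F,\ch\shf Q\rangle=1$ rather than from $\Delta(\shf Q)=0$ is an equivalent computation. Your positivity argument ($\shf G=\mathcal H^{-1}(\shf Q)\in\Coh^{\le s}_L$, so $\mu_L(\shf G)\le s<0$) is correct. It is also a little more direct than the paper's, which gets $L\cdot C\ge0$ from $\mu_L(\shf F)\le0$ and strictness from the Hodge index theorem.

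Two steps, however, fail as written. The first is your exclusion of $r=0$. It is not true that a torsion sheaf has phase one; only zero-dimensional sheaves do. If $r=0$, then $\ch(\shf F)=(0,-C,-1)$ is not the class of a zero-dimensional sheaf. So $\shf F$ has one-dimensional support, $\Im Z_{s,t}(\shf F)>0$, and no phase contradiction arises. The right argument uses the sequence you already wrote down. The map $\shf F\to\is q$ is a monomorphism in $\cA_s$ out of a nonzero object, hence a nonzero map of sheaves, which is impossible if $\shf F$ is torsion. Its image is a rank-one subsheaf of $\is q$. Since the kernel $\mathcal H^{-1}(\shf Q)\in\Coh^{\le s}_L$ is torsion-free, $\shf F$ is torsion-free of rank $r\ge1$.

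The second is the side of the large-volume limit when $r\ge2$. You apply Remark~\ref{rem:large-volume} to $\shf Q[-1]$ in the region $s'>\mu_L(\shf Q[-1])$, but the remark requires $\mu_L(E)>s'$. For $s'>\mu_L(\shf Q[-1])$, the object lying in $\cA_{s'}$ is $\shf Q$ itself, of negative rank. Large-volume stable objects of negative rank may a priori have a nonzero zero-dimensional $\mathcal H^0$, so nothing there forces $\mathcal H^0(\shf Q)=0$ without a further argument. You must take $s'<\mu_L(\shf Q[-1])=-\frac{L\cdot C}{2d(r-1)}$, as the paper does. Then $\shf Q[-1]$, of class $(r-1,-C,r)$, has positive rank and positive $s'L$-twisted degree, so it is a shift of a torsion-free sheaf. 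The shift is zero because $\mathcal H^0(\shf Q[-1])=\mathcal H^{-1}(\shf Q)$ has rank $r-1>0$. This is what yields $\mathcal H^0(\shf Q)=0$ and the sequence \eqref{eq:wallIqshf}. With these two repairs, and dropping the unused sentence about an $\is q\otimes\ox(-D)$-type image, your proof is complete.
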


\begin{proof} 
Proposition~\ref{prop:twojhfactors}, applied to $\Delta(\is q)=2$, gives exactly two stable factors of discriminant zero. Their stability throughout the slice follows from Proposition~\ref{prop:semihomogeneousobj}.

Taking standard cohomology of \eqref{eq:wallIq} gives
\begin{equation}\label{eq:4.7}
0\longrightarrow\mathcal H^{-1}(\shf Q) \longrightarrow\shf F \longrightarrow\is q \longrightarrow\mathcal H^0(\shf Q) \longrightarrow0.
\end{equation}
Here $\mathcal H^{-1}(\shf F)=0$, so $\shf F$ is a coherent sheaf. Furthermore, $\mathcal H^{-1}(\shf Q)$ belongs to $\Coh_L^{\leq s}(X)$ and is torsion-free.  The map $\shf F\to\is q$ is nonzero, and its image is a rank-one torsion-free sheaf.  It follows from \eqref{eq:4.7} that $\shf F$ is torsion-free and that $r\geq1$.

Since
\[
\ch(\shf Q)=(1-r,C,-1-u),
\]
the two isotropic equalities give
\[
C^2=2ru=2(r-1)(u+1).
\]
Consequently $u=r-1$ and $C^2=2r(r-1)$.

Assume first that $r\geq2$ and put $E=\shf Q[-1]$. Then
\[
\ch(E)=(r-1,-C,r),\qquad\mathcal H^0(E)=\mathcal H^{-1}(\shf Q),\qquad\mathcal H^1(E)=\mathcal H^0(\shf Q).
\]
The object $E$ is stable throughout the slice. Choose a rational number
\[
s'<-\frac{L\cdot C}{(r-1)L^2}.
\]
The $s'L$-twisted degree of $E$ is positive. At large volume, \cite[Proposition~14.2]{Bridgeland:K3} therefore identifies $E$ with a shift of an $s'L$-twisted Gieseker-semistable torsion-free sheaf.  Since $\mathcal H^0(E)$ has positive rank, the shift is zero.  Hence $\mathcal H^1(E)=0$ and $E=\shf G$ is a torsion-free sheaf.  This proves \eqref{eq:wallIqshf} and \eqref{eq:wallIqcharac}.

Applying the large-volume statement also to $\shf F$ shows that $\shf F$ and $\shf G$ are $\mu_L$-semistable.  Lemma~\ref{lem:Delta0sheaf} then makes both sheaves locally free. They are simple because they are stable objects of the Bridgeland slice.  Since $\Delta=0$, Serre duality and Riemann--Roch give $\dim\Ext^1(\shf F,\shf F)=\dim\Ext^1(\shf G,\shf G)=2$. Mukai's theorem \cite[Theorem~5.8]{Mukai78} now implies that they are semihomogeneous.

The nonzero map $\shf F\to\is q$ and slope semistability give $\mu_L(\shf F)\leq0$, and hence $L\cdot C\geq0$.  Since $C^2=2r(r-1)>0$, the Hodge index theorem forces this inequality strict. Lemma~\ref{lem:positivecone} then shows that $C$ is ample.

Finally, suppose $r=1$.  A rank-one torsion-free sheaf has the form $M\otimes\mathcal I_W$.  The equality $\Delta(\shf F)=0$ forces $W=\emptyset$, so $\shf F=M$ is a line bundle.  The nonzero map $M\to\is q$ is injective and determines an effective divisor $C$ through $q$ with $M\simeq\ox(-C)$. The remaining assertions of \eqref{eq:wallIqrk1} follow from \eqref{eq:wallIqch}.
\end{proof}

\subsection{Cohomological rank function}
We next connect this wall description to the cohomological rank function. For an object $E$, write
\[
P_E(s)=\ch_2^s(E) =\ch_2(E)-sL\cdot\ch_1(E)+ds^2\ch_0(E),\qquad s\in \R.
\]
We shall use the notation for the zero-slope loci introduced by Lahoz--Rojas \cite{LR23}.   When $P_E(s)$ has two zeros, we write them as $p_E^-\leq p_E^+$; when it has only one zero, we simply write the zero as $p_E$.

\begin{proposition}[The first wall and the basepoint-freeness threshold]
\label{prop:1stwall}
If $\is q$ has no actual wall in the region $s<0$, then
\begin{equation}\label{eq:4.8}
\epsilon_1(l)=\tfrac1{\sqrt d}.
\end{equation}
Suppose now that its actual wall is described by Proposition~\ref{prop:wallIq}.
\begin{enumerate}
\item[(i)] If $r=1$ and $m=L\cdot C$, then
\[
p_{\shf Q}=-\frac1m <-\frac1{\sqrt d} <-\frac md=p_{\shf F}^{-}<0,
\]
and
\begin{equation}\label{eq:4.9}
\epsilon_1(l)=-p_{\shf Q}=\frac1{L\cdot C}.
\end{equation}
\item[(ii)] If $r\geq2$, put $L$-discriminant $\overline\Delta=(L\cdot C)^2-4dr(r-1)$, then
\begin{align}
p_{\shf F}^{\pm}
=\frac{-L\cdot C\pm\sqrt{\overline\Delta}}{2rd},\qquad
 p_{\shf Q}^{\pm}
=\frac{-L\cdot C\pm\sqrt{\overline\Delta}}{2(r-1)d},
\label{eq:wallroots}
\end{align}
where the roots for $\shf Q=\shf G[1]$ are those of $P_{\shf G}$. The relevant roots satisfy
\begin{equation}\label{eq:rootordering}
p_{\shf Q}^{+}<-\frac1{\sqrt d}<p_{\shf F}^{-},
\end{equation}
and
\begin{equation}\label{eq:rootthreshold}
\epsilon_1(l)=-p_{\shf Q}^{+} =\frac{L\cdot C-\sqrt{\overline\Delta}}{2(r-1)d} =\frac{2r}{L\cdot C+\sqrt{\overline\Delta}}.
\end{equation}
\end{enumerate}
\end{proposition}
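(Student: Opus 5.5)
The plan is to compute $\epsilon_1(l)$ from the Lahoz--Rojas dictionary \cite{LR23} between cohomological rank functions and Harder--Narasimhan filtrations for the weak stability functions $\nu_{s,0}$, $s<0$. Concretely, for an object $E$ of the heart with $\nu_{s,0}$-HN factors $E_1,\dots,E_m$, the value $h^1_{E,l}(-s)$ is governed by the factors of negative $\nu_{s,0}$-slope. Since $\Im Z_{s,0}>0$ on these factors, negative slope means $\ch_2^s=P_{E_i}(s)<0$. Hence
\[
\epsilon_1(l)=\inf\{-s : \text{every $\nu_{s,0}$-HN factor of $\is q$ satisfies } P_{E_i}(s)\ge0\}.
\]
I would state this reduction first, citing \cite{LR23}; it only uses that $h^1$ of a semistable factor is computed by the sign of its $\ch_2^s$ in the limit $t\to0$.

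\textbf{No wall.} If $\is q$ has no actual wall for $s<0$, then $\is q$ is $\nu_{s,0}$-semistable for all $s<0$. Since $P_{\is q}(s)=-1+ds^2$, it is nonnegative exactly for $s\le -1/\sqrt d$, which gives \eqref{eq:4.8}.

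\textbf{Wall present.} Below the wall, the $\nu_{s,0}$-HN filtration of $\is q$ is $\shf F\subset\is q$ with quotient $\shf Q$. Both factors are stable throughout $\Stab_L(X)$ by Proposition~\ref{prop:wallIq} and Proposition~\ref{prop:semihomogeneousobj}, and one passes to the limit $t\to0$ (checking that stability persists for $\nu_{s,0}$). The threshold is then the largest $-s$ at which one of $P_{\shf F}(s)$, $P_{\shf Q}(s)$ changes sign. The roots are routine computations:
\begin{itemize}
\item For $r=1$, $\ch(\shf F)=(1,-C,0)$ and $\ch(\shf Q)=(0,C,-1)$, so $P_{\shf F}=ms+ds^2$ and $P_{\shf Q}=-1-ms$. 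This gives $p_{\shf Q}=-1/m$ and $p^-_{\shf F}=-m/d$.
\item For $r\ge2$, the Chern characters \eqref{eq:wallIqcharac} give $P_{\shf F}=(r-1)+(L\cdot C)s+rds^2$ and $P_{\shf G}=r+(L\cdot C)s+(r-1)ds^2$. The quadratic formula, with discriminant $(L\cdot C)^2-4dr(r-1)=\overline\Delta$, yields \eqref{eq:wallroots}.
\end{itemize}
The last equality in \eqref{eq:rootthreshold} is the rationalization $(L\cdot C-\sqrt{\overline\Delta})(L\cdot C+\sqrt{\overline\Delta})=4dr(r-1)$.

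\textbf{Root ordering.} I would prove \eqref{eq:rootordering} (and $m^2<d$ in case (i)) from the geometry of walls. The numerical wall $W(\shf F,\is q)$ in the $(s,t)$-plane is a semicircle nested with the others, and it must meet the region $s<0$ where $\is q$ is destabilized. Its intersection with $t=0$ lies at the pair of points where $\nu_{s,0}(\shf F)=\nu_{s,0}(\is q)$. Since the wall is actual, $\shf F$ destabilizes $\is q$ on it, while $\is q$ is semistable for larger $t$. The plan is as follows.
\begin{itemize}
\item Show that the wall is strictly to the right of the point $s=-1/\sqrt d$ on the real axis, i.e. $P_{\is q}(s)<0$ throughout the range where the destabilized filtration is relevant.
\item On that range, use $\nu_{s,0}(\shf F)>\nu_{s,0}(\shf Q)$ together with $P_{\is q}=P_{\shf F}+P_{\shf Q}$ to conclude that $P_{\shf F}>0>P_{\shf Q}$ near the wall. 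Moving left, $P_{\shf Q}$ reaches zero at $p^+_{\shf Q}<-1/\sqrt d$ before $P_{\shf F}$ reaches zero at $p^-_{\shf F}$.
\item Then $\epsilon_1(l)=-p^+_{\shf Q}$ (respectively $-p_{\shf Q}$ in case (i)).
\end{itemize}

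\textbf{Main obstacle.} The hard step is this ordering argument: ensuring that to the left of the wall, down to $s=p^+_{\shf Q}$, no further wall changes the HN filtration. I would handle it with the stability of $\shf F$ and $\shf Q$ on the whole slice, which rules out further refinement of the filtration. I would also compare the positions of the roots using the nested-wall structure, specifically that $P_{\shf F}$ and $P_{\shf Q}$ vanish on opposite sides of $-1/\sqrt d$ because the sum $P_{\is q}$ vanishes there.
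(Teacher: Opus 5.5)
There is a genuine gap in the step you yourself flag as the main obstacle. The root ordering \eqref{eq:rootordering}, together with $m<\sqrt d$ in case (i), carries the real content of the proposition, and your plan for it rests on a wrong picture. The numerical wall of $\is q$ meets $t=0$ at the two roots of $s^2+\frac{2(2r-1)}{m}s+\frac1d=0$, where $m=L\cdot C$. Their product is $1/d$, so the wall straddles $-1/\sqrt d$ rather than lying to its right. The range that determines the threshold, $p^+_{\shf Q}<s\le-1/\sqrt d$, lies where $P_{\is q}(s)\ge0$, not where it is negative. On the part of the wall where $P_{\is q}<0$, your seesaw argument does give $P_{\shf Q}<0$, but it says nothing about the sign of $P_{\shf F}$ there. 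Your closing justification also fails: $P_{\shf F}+P_{\shf Q}=P_{\is q}$ vanishing at $-1/\sqrt d$ only gives $P_{\shf F}(-1/\sqrt d)=-P_{\shf Q}(-1/\sqrt d)$. That determines neither which one is positive nor where the roots lie. Finally, ``moving left, $P_{\shf Q}$ vanishes before $P_{\shf F}$ reaches $p^-_{\shf F}$'' contradicts the ordering you want to prove, since $p^-_{\shf F}>-1/\sqrt d$.

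The missing input is quantitative.
\begin{enumerate}
\item[(a)] Solve $\nu_{s,t}(\shf F)=\nu_{s,t}(\is q)$ explicitly to get the circle $t^2+(s+\frac{2r-1}{m})^2=\frac{(2r-1)^2}{m^2}-\frac1d$. Since the wall is actual, its radius is positive, so $m<(2r-1)\sqrt d$. For $r=1$ this is $m<\sqrt d$, which gives the chain in (i) at once.
\item[(b)] For $r\ge2$, the same bound gives $P_{\shf F}(-1/\sqrt d)=P_{\shf G}(-1/\sqrt d)=2r-1-m/\sqrt d>0$. It also places the vertex $-m/(2rd)=\mu_L(\shf F)$ of $P_{\shf F}$ to the right of $-1/\sqrt d$.
\item[(c)] The Hodge index theorem, $m\ge2\sqrt{r(r-1)d}$, places the vertex $-m/(2(r-1)d)=\mu_L(\shf G)$ of $P_{\shf G}$ to the left of $-1/\sqrt d$.
\item[(d)] A point right of the vertex of $P_{\shf G}$ where $P_{\shf G}>0$ lies right of $p^+_{\shf Q}$. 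A point left of the vertex of $P_{\shf F}$ where $P_{\shf F}>0$ lies left of $p^-_{\shf F}$. This is \eqref{eq:rootordering}.
\item[(e)] The same vertex comparisons give $\shf F,\shf G[1]\in\cA_s$ for $s\in(p^+_{\shf Q},-1/\sqrt d]$. There the two stable factors form the $\nu_{s,0}$-HN filtration with $P_{\shf F}>0>P_{\shf Q}$, so $h^1_{\is q,l}(-s)=P_{\shf G}(s)>0$.
\item[(f)] For $s\in(-1/\sqrt d,0)$, positivity of $h^1$ follows from $h^0-h^1=P_{\is q}<0$. Vanishing holds just left of $p^+_{\shf Q}$, or at $p^+_{\shf Q}$ itself when $\overline\Delta=0$.
\end{enumerate}
With this in place, the rest of your outline matches the paper: the Lahoz--Rojas dictionary, the no-wall case, the root formulas and the rationalization.
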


\begin{proof}
By \cite[Definition~3.1 and Theorem~D]{LR23}, the HN filtration computes the cohomological rank function:
if $E_j$ are the $\nu_{s,0}$-HN factors of $E\in\cA_s$, then
\begin{equation}\label{eq:4.13}
h^1_{E,l}(-s) =\sum_{\nu_{s,0}(E_j)<0}-P_{E_j}(s).
\end{equation}
In particular,
\begin{equation*}
h^0_{E,l}(-s)-h^1_{E,l}(-s)=P_E(s).
\end{equation*}
Note that for $\is q$ one has
\[
P_{\is q}(s)=-1+ds^2, \qquad p_{\is q}^{-}=-\tfrac{1}{\sqrt d}.
\]
If $\is q$ has no actual wall for $s<0$, then it remains semistable and
\[
h^1_{\is q,l}(-s)=
\begin{cases}
1-ds^2,& p_{\is q}^{-}<s<0,\\
0,&s\leq p_{\is q}^{-}.
\end{cases}
\]
This proves \eqref{eq:4.8}.

Assume that an actual wall exists and put $m=L\cdot C$. Directly solving $\nu_{s,t}(\shf F)=\nu_{s,t}(\is q)$ gives the numerical wall
\begin{equation}\label{eq:first-wall-circle}
t^2+\left(s+\frac{2r-1}{m}\right)^2 =\frac{(2r-1)^2}{m^2}-\frac1d.
\end{equation}
Since the wall is actual, its radius is positive and therefore
\begin{equation}\label{eq:4.15}
m<(2r-1)\sqrt d.
\end{equation}

\noindent (i) If $r=1$, then
\[
P_{\shf F}(s)=s(m+ds), \qquad P_{\shf Q}(s)=-1-ms.
\]
The root ordering follows from \eqref{eq:4.15}. For $p_{\shf Q}<s\leq-1/\sqrt d$, by Proposition \ref{prop:wallIq}, the two stable factors form the weak HN filtration, with $P_{\shf F}(s)>0$ and $P_{\shf Q}(s)<0$. Formula \eqref{eq:4.13} gives
\[
h^1_{\is q,l}(-s)=-P_{\shf Q}(s)=1+ms>0.
\]
For $-1/\sqrt d<s<0$, the identity $h^0_{\is q,l}(-s)-h^1_{\is q,l}(-s)=P_{\is q}(s)<0$ gives the same positivity.  At $s=p_{\shf Q}$ both factors have nonnegative weak slope, so \eqref{eq:4.13} gives vanishing. This proves \eqref{eq:4.9}.

\noindent (ii) Now suppose $r\geq2$. The Hodge index theorem and
$C^2=2r(r-1)$ show that $\overline\Delta\geq0$, giving
\eqref{eq:wallroots}.  At $s=-1/\sqrt d$ one has
\[
P_{\shf F}(s)=P_{\shf G}(s)
=2r-1-\frac m{\sqrt d}>0
\]
by \eqref{eq:4.15}. Moreover,
\[
-\frac{m}{2(r-1)d}<-\frac1{\sqrt d}
<-\frac{m}{2rd},
\]
where the first inequality follows from the Hodge lower bound and the second from \eqref{eq:4.15}.  Locating $-1/\sqrt d$ relative to the vertices of the two quadratics gives \eqref{eq:rootordering}.

For $p_{\shf Q}^{+}<s\leq-1/\sqrt d$, the stable factors again give the weak HN filtration and
\[
h^1_{\is q,l}(-s)
=-P_{\shf Q}(s)=P_{\shf G}(s)>0.
\]
For $-1/\sqrt d<s<0$, positivity follows from
$P_{\is q}(s)<0$. At, or immediately to the left of,
$s=p_{\shf Q}^{+}$, both factors have nonnegative weak slope. More precisely, if $\overline\Delta>0$, the vanishing holds for rational
$s<p_{\shf Q}^{+}$ sufficiently close to the root; if
$\overline\Delta=0$, it holds at the rational root itself.  Hence
$\epsilon_1(l)=-p_{\shf Q}^{+}$.  The last expression in
\eqref{eq:rootthreshold} follows from $(L\cdot C)^2-\overline\Delta=4dr(r-1).$
\end{proof}

\subsection{The rank-one wall}

\begin{corollary}\label{cor:rk1true}
Assume that $X$ contains no elliptic curve $E$ with $L\cdot E\leq p+2$.  If the actual wall of $\is q$ has rank one, then
\[
\epsilon_1(l)<\frac1{p+2}.
\]
\end{corollary}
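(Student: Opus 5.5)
By Proposition~\ref{prop:1stwall}(i), a rank-one wall gives $\epsilon_1(l)=1/(L\cdot C)$, where $\shf F\simeq\ox(-C)$ with $0\neq C\geq0$ and $C^2=0$ as in \eqref{eq:wallIqrk1}. So it suffices to show $L\cdot C\geq p+3$. Suppose instead that $L\cdot C\leq p+2$; the plan is to find an elliptic curve $E\subseteq X$ with $L\cdot E\leq L\cdot C\leq p+2$, contradicting the hypothesis.

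The main step is the structure of an effective divisor $C\neq0$ with $C^2=0$ on an abelian surface.

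\emph{Components.} Write $C=\sum a_iC_i$ with the $C_i$ irreducible and $a_i>0$. Each irreducible curve on an abelian surface has $C_i^2\geq0$, and two distinct irreducible curves have $C_i\cdot C_j\geq0$. Hence
\[
0=C^2=\sum a_i^2C_i^2+\sum_{i\neq j}a_ia_jC_i\cdot C_j
\]
forces $C_i^2=0$ for every $i$.

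\emph{Elliptic components.} An irreducible curve $C_i$ with $C_i^2=0$ has arithmetic genus $p_a(C_i)=1+C_i^2/2=1$. Its normalization maps nonconstantly to $X$, and $X$ contains no rational curves, so the geometric genus is also at least $1$. Therefore $C_i$ is smooth of genus one, that is, an elliptic curve.

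\emph{Degree bound.} Take any component $E=C_1$. Since $L$ is ample, $L\cdot C_j>0$ for every $j$, so
\[
L\cdot E\leq\sum a_iL\cdot C_i=L\cdot C\leq p+2.
\]
This contradicts the assumption that $X$ contains no such elliptic curve. Hence $L\cdot C\geq p+3$ and $\epsilon_1(l)=1/(L\cdot C)<1/(p+2)$.

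I expect no step to be a serious obstacle. The only point needing care is the genus argument in the second step: it relies on the standard facts that abelian surfaces contain no rational curves and that an irreducible curve with $p_a=1$ and non-rational normalization is smooth elliptic.
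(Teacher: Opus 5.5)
Your proof is correct and follows the paper's argument. The paper simply asserts that a nonzero effective divisor $C$ with $C^2=0$ is numerically a positive multiple of an elliptic curve, and then reads off $L\cdot C>p+2$ from $\epsilon_1(l)=1/(L\cdot C)$. You instead justify that step directly: you show each component of $C$ is a smooth genus-one curve whose $L$-degree is at most $L\cdot C$.
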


\begin{proof}
By Proposition~\ref{prop:wallIq}, the wall factor is $\shf F\simeq\ox(-C)$ for a nonzero effective divisor $C$ with $C^2=0$. Such a divisor is numerically a positive multiple of an elliptic curve. The hypothesis therefore gives $L\cdot C>p+2$.  Formula \eqref{eq:4.9} yields $\epsilon_1(l)=1/{L\cdot C}<1/{(p+2)}$.
\end{proof}

\section{Numerical consequences of the wall}
\label{sec:numerical-wall}
By Proposition ~\ref{prop:1stwall} and Corollary~\ref{cor:rk1true}, to study Property $N_p$ when $d>(p+2)^2$, we may focus on the case in which $\is q$ admits a higher rank destabilizing wall. Throughout this section, we assume that the actual wall of $\is q$ has rank $r\geq2$ and use the notation of Proposition~\ref{prop:wallIq}.
\subsection{Bound for $r$ and $L\cdot C$}
\begin{lemma}\label{lem:boundforr}
Assume $d>(p+2)^2$ and $\epsilon_1(l)\geq\frac1{p+2}$.  Then
\begin{equation}\label{eq:5.1}
r\leq\frac{d}{d-(p+2)^2}.
\end{equation}
Equality holds if and only if $\epsilon_1(l)=1/(p+2)$ and $\overline\Delta=0$.
\end{lemma}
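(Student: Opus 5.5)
We work directly from the explicit formula \eqref{eq:rootthreshold} for the threshold in the rank $r\geq2$ case, $\epsilon_1(l)=\frac{2r}{m+\sqrt{\overline\Delta}}$ with $m=L\cdot C$ and $\overline\Delta=m^2-4dr(r-1)\geq0$. Put $a=p+2$. The hypothesis $\epsilon_1(l)\geq\frac1a$ becomes
\[
m+\sqrt{\overline\Delta}\leq 2ra .
\]
The plan is to turn this into an upper bound on $m$ and then compare it with the lower bound on $m$ coming from the Hodge index theorem, $m^2\geq 4dr(r-1)$. The latter is exactly the statement $\overline\Delta\geq0$.

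\textbf{Step 1.} Since $\sqrt{\overline\Delta}\geq0$, we get $m\leq 2ra$, so $2ra-m\geq0$. Squaring $\sqrt{\overline\Delta}\leq 2ra-m$ is therefore legitimate, and gives
\[
m^2-4dr(r-1)\leq 4r^2a^2-4ram+m^2 .
\]
After cancelling $m^2$ and dividing by $4r$, this reads $am\leq ra^2+d(r-1)$, that is,
\[
m\leq ra+\frac{(r-1)d}{a}.
\]
This is the inequality quoted in the introduction.

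\textbf{Step 2.} We now combine Step 1 with the Hodge bound $m\geq 2\sqrt{dr(r-1)}$. This yields
\[
2\sqrt{dr(r-1)}\leq ra+\frac{(r-1)d}{a}.
\]
Rather than manipulate this directly, it is cleaner to use the AM--GM inequality. We have $ra+\frac{(r-1)d}{a}\geq 2\sqrt{r(r-1)d}$, with equality exactly when $ra^2=(r-1)d$. That alone points the wrong way, so instead we argue through the quadratic in $\epsilon=\epsilon_1(l)$.

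\textbf{The main step.} By \eqref{eq:rootthreshold}, $-\epsilon=p_{\shf Q}^+$ is the larger root of $P_{\shf G}$. Hence $\epsilon$ satisfies
\[
(r-1)d\epsilon^2-m\epsilon+r=0 ,
\]
which gives $m=(r-1)d\epsilon+\frac{r}{\epsilon}$. Since $\epsilon$ is the smaller root of this quadratic, it lies to the left of the vertex, so
\[
\epsilon\leq\frac{m}{2(r-1)d}.
\]
This inequality is equivalent to $\frac{r}{\epsilon}\geq(r-1)d\epsilon$, i.e. $\epsilon^2\leq\frac{r}{(r-1)d}$, with equality if and only if $\overline\Delta=0$. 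Combining with $\epsilon\geq\frac1a$ gives
\[
\frac{1}{a^2}\leq\frac{r}{(r-1)d} .
\]
Clearing denominators, $(r-1)d\leq ra^2$, i.e. $r\bigl(d-a^2\bigr)\leq d$. Since $d>a^2$, dividing gives
\[
r\leq\frac{d}{d-a^2},
\]
which is \eqref{eq:5.1}.

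\textbf{Equality.} Equality in \eqref{eq:5.1} forces both inequalities in the chain to be equalities, namely $\epsilon=\frac1a$ and $\epsilon^2=\frac{r}{(r-1)d}$, and the latter means $\overline\Delta=0$. Conversely, if $\epsilon=\frac1a$ and $\overline\Delta=0$, then $\frac1{a^2}=\frac{r}{(r-1)d}$, which gives equality in \eqref{eq:5.1}.

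The only delicate point is justifying that $\epsilon$ is the smaller root (left of the vertex). This follows directly from \eqref{eq:rootthreshold}, whose numerator is $m-\sqrt{\overline\Delta}$.
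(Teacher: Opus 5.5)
Your argument is correct, but its decisive step differs from the paper's proof.

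\textbf{The paper's route.} It squeezes $L\cdot C$ between two bounds:
\begin{itemize}
\item evaluating $P_{\shf G}$ at $s=-\frac1{p+2}$, which lies at or to the right of $p^+_{\shf Q}$, gives the upper bound \eqref{eq:wallupperbound}, namely $L\cdot C\le (p+2)r+\frac{(r-1)d}{p+2}$;
\item the vertex of $P_{\shf G}$ lies left of $p^+_{\shf Q}\le-\frac1{p+2}$, which gives $L\cdot C\ge\frac{2(r-1)d}{p+2}$.
\end{itemize}
Comparing the two bounds yields $(r-1)d\le(p+2)^2r$.

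\textbf{Your route.} You never evaluate at $-\frac1{p+2}$. Your main step amounts to observing that $\epsilon=\epsilon_1(l)$ is the smaller root of $(r-1)d\,\epsilon^2-m\epsilon+r$. Hence $\epsilon^2\le\epsilon_-\epsilon_+=\frac{r}{(r-1)d}$, with equality iff $\overline\Delta=0$. Only afterwards do you insert $\epsilon\ge\frac1{p+2}$.

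This is exactly the $p$-independent bound $\epsilon_1(l)\le\sqrt{r/((r-1)d)}$ that the paper proves separately in Proposition~\ref{a  rough bound  of r}. Your route makes the lemma an immediate corollary of that bound. It also gives a cleaner equality analysis: one chain of two inequalities, with no case split between $\epsilon_1(l)>\frac1{p+2}$ and $\epsilon_1(l)=\frac1{p+2}$.

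\textbf{What the paper's route buys.} It produces the intermediate estimate \eqref{eq:wallupperbound} as a byproduct. That estimate is reused in Lemma~\ref{L.C}, Lemma~\ref{lem:twisted-wall-IT0} and Proposition~\ref{positivity of B term}.

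\textbf{Your Steps 1 and 2.} Your Step~1 derives the same estimate \eqref{eq:wallupperbound} correctly, but it plays no role in your proof of the lemma. Your Step~2 is vacuous, as you noticed: the Hodge bound $m\ge2\sqrt{dr(r-1)}$ and Step~1 are always compatible by AM--GM. Both steps can be dropped from a write-up of this lemma.
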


\begin{proof}
First, suppose that $\epsilon_1(l)>1/(p+2)$.  By \eqref{eq:rootthreshold}, $p^+_{\shf Q}<-1/(p+2)$.  Evaluating $P_{\shf G}$ at $s=-1/(p+2)$ gives
\begin{equation}\label{bound 1}
L\cdot C<(p+2)r+\frac{(r-1)d}{p+2}.
\end{equation}

If $\epsilon_1(l)=1/(p+2)$, then $-\frac{1}{p+2}=p^+_{\shf Q}$, and $-\ch^s_2(\shf Q)>0$ for $s=-\frac{1}{p+2}+\epsilon$.  Consequently,
\begin{equation}\label{bound 2}
L\cdot C= (p+2)r+\frac{(r-1)d}{p+2}.
\end{equation}
In either case, we have the uniform estimate
\begin{equation}\label{eq:wallupperbound}
L\cdot C\leq(p+2)r+\frac{(r-1)d}{p+2}.
\end{equation}

The vertex of $P_{\shf G}$ lies to the left of its larger root. Hence
\[
\mu_L(\shf G)=-\frac{L\cdot C}{2(r-1)d}\leq p^+_{\shf Q}\leq-\frac1{p+2},
\]
and therefore
\[
L\cdot C\geq\frac{2(r-1)d}{p+2}.
\]
Combining this inequality with \eqref{eq:wallupperbound} gives \eqref{eq:5.1}.  Equality forces equality in both estimates, which is equivalent to $p^+_{\shf Q}=\mu_L(\shf G)=-1/(p+2)$; equivalently, $\epsilon_1(l)=1/(p+2)$ and $\overline\Delta=0$.
\end{proof}

The lemma gives the following slight strengthening of \cite[Theorem~1.2]{Ito18}.
\begin{corollary}\label{Ito18}
Suppose that $d\geq2(p+2)^2$ and that $X$ contains no elliptic curve $E$ with $L\cdot E\leq p+2$.  Then either $\epsilon_1(l)<1/(p+2)$, or
\begin{equation}\label{eq:5.5}
d=2(p+2)^2,\qquad
L\simeq M^{\otimes(p+2)},\qquad M^2=4.
\end{equation}
Here $M$ is ample.  In the exceptional case $L$ is of type $(p+2,2(p+2))$ and $\epsilon_1(l)=1/(p+2)$.
\end{corollary}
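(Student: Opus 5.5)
Suppose $\epsilon_1(l)\geq 1/(p+2)$; we show that \eqref{eq:5.5} holds. If $\is q$ has no actual wall for $s<0$, then \eqref{eq:4.8} gives $\epsilon_1(l)=1/\sqrt d<1/(p+2)$, since $d>(p+2)^2$. If the wall has rank one, Corollary~\ref{cor:rk1true} gives $\epsilon_1(l)<1/(p+2)$. So we may assume the wall has rank $r\geq2$, with the notation of Proposition~\ref{prop:wallIq}.

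Lemma~\ref{lem:boundforr} then applies. Since $d\geq 2(p+2)^2$, we have $d-(p+2)^2\geq d/2$, and hence
\[
r\leq \frac{d}{d-(p+2)^2}\leq 2 .
\]
Therefore $r=2$, and equality holds throughout. Equality in the second step forces $d=2(p+2)^2$. Equality in \eqref{eq:5.1} forces $\epsilon_1(l)=1/(p+2)$ and $\overline\Delta=0$. With $r=2$, the identity $\overline\Delta=0$ reads $(L\cdot C)^2=8d=16(p+2)^2$, so $L\cdot C=4(p+2)$. Also $C^2=2r(r-1)=4$ by \eqref{eq:C2}. Now compare $C$ and $L$:
\[
(L\cdot C)^2=16(p+2)^2=L^2\,C^2 ,
\]
using $L^2=2d=4(p+2)^2$. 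So equality holds in the Hodge index theorem, and $L\equiv\lambda C$ numerically for some $\lambda$. Then $L\cdot C=\lambda C^2$ gives $\lambda=(p+2)$, so $L\equiv (p+2)C$.

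The remaining step, which I expect to be the main subtle point, is to upgrade this numerical equivalence to an isomorphism $L\simeq M^{\otimes(p+2)}$. Numerical and algebraic equivalence agree on an abelian surface, so $L\otimes\ox(-(p+2)C)\in\Pic^0(X)$. Since $\Pic^0(X)$ is divisible, we can choose $\alpha\in\Pic^0(X)$ with $\alpha^{\otimes(p+2)}\simeq L\otimes\ox(-(p+2)C)$. Setting $M=\ox(C)\otimes\alpha$ gives $L\simeq M^{\otimes(p+2)}$ and $M^2=C^2=4$. Proposition~\ref{prop:wallIq}(ii) says $C$ is ample, so $M$ is ample. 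An ample $M$ with $M^2=4$ has type $(1,2)$, and therefore $L$ has type $(p+2,2(p+2))$.

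Finally, in this exceptional case the value $\epsilon_1(l)=1/(p+2)$ is already given by the equality case of Lemma~\ref{lem:boundforr}. If one instead wants it for every $L$ of this form, directly from the hypotheses of \eqref{eq:5.5}, a further argument is needed. One option is the homogeneity $\epsilon_1(kl)=\epsilon_1(l)/k$ together with $\epsilon_1(m)=1$ for $m$ the class of $M$, which holds because a $(1,2)$ polarization has base points. Another is to exhibit the rank-two wall explicitly through simple semihomogeneous bundles $\shf F$ with $\delta(\shf F)=-m/2$. I would cite the former, since it is a standard property of the threshold.
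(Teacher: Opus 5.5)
Your proposal is correct and follows the paper's route: rule out the no-wall and rank-one cases, force equality in Lemma~\ref{lem:boundforr} to get $r=2$, $d=2(p+2)^2$ and $\overline\Delta=0$, then use the equality case of Hodge index to get $L\equiv(p+2)C$, and finally lift this to $L\simeq M^{\otimes(p+2)}$ by divisibility of $\Pic^0(X)$. As in the paper, the value $\epsilon_1(l)=1/(p+2)$ for every such $L$ follows from homogeneity and $\epsilon_1(m)=1$, and your derivation of the type $(p+2,2(p+2))$ from $M^2=4$ supplies a detail the paper leaves implicit.
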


\begin{proof}
Assume that $\epsilon_1(l)\geq1/(p+2)$.  Formula \eqref{eq:4.8} excludes the no-wall case, and Corollary~\ref{cor:rk1true} excludes the rank-one wall. Thus the first wall has rank $r\geq2$.  Since
\[
\frac{d}{d-(p+2)^2}\leq2,
\]
Lemma~\ref{lem:boundforr} forces equality throughout:
\[
r=2,\qquad d=2(p+2)^2,\qquad\overline\Delta=0.
\]
Thus $C^2=4$, $L\cdot C=4(p+2)$, and $(L\cdot C)^2=L^2C^2$. The equality case of the Hodge index theorem gives the numerical equivalence $L\equiv(p+2)C$.

Put $M_0=\ox(C)$.  Then $\alpha=L\otimes M_0^{-\otimes(p+2)}$ belongs to $\Pic^0(X)$. The multiplication by $p+2$ on $\Pic^0(X)$ is surjective, so there is an $\eta\in\Pic^0(X)$ with $\eta^{\otimes(p+2)}\simeq\alpha$.  Setting $M=M_0\otimes\eta$ gives \eqref{eq:5.5}. Lemma~\ref{lem:boundforr} also gives $\epsilon_1(l)=1/(p+2)$.  Since $C$ is ample, so is $M$.

Conversely, if $L\simeq M^{\otimes(p+2)}$ with $M^2=4$, then $h^0(X,M)=2$, so $M$ is not basepoint-free.  Write $m$ for the numerical class of $M$.  Then $\epsilon_1(m)=1$ by Theorem~\ref{thm:caucci}, and homogeneity of the basepoint-freeness threshold gives $\epsilon_1(l)=\epsilon_1(m)/(p+2)$.
\end{proof}

\begin{lemma}\label{L.C}
Let $p\geq1$ and $d>(p+2)^2$.  Suppose that $\frac{1}{p+1}>\epsilon_1(l)\geq\frac{1}{p+2}$. Then
\[L\cdot C=(p+2)r+\lfloor\frac{(r-1)d}{p+2}\rfloor,\]
where $\lfloor-\rfloor$ denotes the integer part.
\end{lemma}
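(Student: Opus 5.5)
The plan is to squeeze $L\cdot C$ between two bounds that come from evaluating the quadratic $P_{\shf G}$ at $s=-\tfrac{1}{p+2}$ and at $s=-\tfrac{1}{p+1}$. Since $\epsilon_1(l)\ge\frac1{p+2}$ and $d>(p+2)^2$, the argument in Lemma~\ref{lem:boundforr} applies. Corollary~\ref{cor:rk1true} and \eqref{eq:4.8} only ensure $\epsilon_1(l)<\frac1{p+2}$ under the hypothesis on elliptic curves, which we do not assume here. So we first argue directly that the wall has rank $r\ge2$:
\begin{itemize}
\item the no-wall case gives $\epsilon_1=1/\sqrt d<\frac1{p+2}$, which is excluded;
\item a rank-one wall gives $\epsilon_1=1/m$ with $m$ an integer, and $\frac1{p+1}>\frac1m\ge\frac1{p+2}$ forces $m=p+2$; in that case the statement is read with $r=1$, when $\lfloor 0\rfloor=0$ and $L\cdot C=p+2$ holds trivially.
\end{itemize}
In the main case $r\ge2$, the uniform estimate \eqref{eq:wallupperbound} gives
\[
L\cdot C\le (p+2)r+\frac{(r-1)d}{p+2}.
\]
Since $L\cdot C$ is an integer, this yields $L\cdot C\le (p+2)r+\lfloor (r-1)d/(p+2)\rfloor$.

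For the lower bound, use $\epsilon_1(l)=-p^+_{\shf Q}<\frac1{p+1}$, so $p^+_{\shf Q}>-\frac1{p+1}$. The vertex of $P_{\shf G}$ is $\mu_L(\shf G)=-\frac{L\cdot C}{2(r-1)d}\le -\frac1{p+2}$, as shown in Lemma~\ref{lem:boundforr}. We need the point $s_1=-\frac1{p+1}$ to lie strictly between the two roots of $P_{\shf G}$, so that $P_{\shf G}(s_1)<0$. Evaluating,
\[
P_{\shf G}(s_1)=r-\frac{L\cdot C}{p+1}+\frac{(r-1)d}{(p+1)^2},
\]
and negativity is equivalent to
\[
L\cdot C>(p+1)r+\frac{(r-1)d}{p+1}.
\]
The task is then to show this exceeds $(p+2)r+\frac{(r-1)d}{p+2}-1$. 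The difference is
\[
\frac{(r-1)d}{(p+1)(p+2)}-r.
\]
Using $d>(p+2)^2$ and $r\ge2$, this is at least $\frac{(r-1)(p+2)}{p+1}-r=\frac{r-p-2}{p+1}$, which is not obviously $>-1$ for large $r$. Here Lemma~\ref{lem:boundforr} helps: it bounds $r\le d/(d-(p+2)^2)$, which we combine with the exact expression to get the difference $>-1$. A cleaner route is to compare the two evaluations $P_{\shf G}(-\frac1{p+2})\ge0$ and $P_{\shf G}(-\frac1{p+1})<0$ directly. From them we want to deduce that the integer $L\cdot C$ lies in the half-open interval $\bigl((p+2)r+\frac{(r-1)d}{p+2}-1,\ (p+2)r+\frac{(r-1)d}{p+2}\bigr]$, and this interval contains exactly one integer, namely the floor expression.

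The main obstacle is justifying that $s_1$ lies to the right of $p^-_{\shf G}$, so that $P_{\shf G}(s_1)<0$ rather than $P_{\shf G}(s_1)>0$ because $s_1$ lies left of both roots. This needs $s_1>\mu_L(\shf G)$. We would get it from $r\le d/(d-(p+2)^2)$ together with the upper bound on $L\cdot C$: the vertex is at most $-\frac{1}{p+2}\cdot\frac{(p+2)^2r+(r-1)d}{2(r-1)d}\cdot 2$, and we check that this is $<-\frac1{p+1}$. If that fails, the fallback is to use $p^+_{\shf Q}>-\frac1{p+1}$ through the explicit root formula \eqref{eq:rootthreshold}, namely $\frac{2r}{L\cdot C+\sqrt{\overline\Delta}}<\frac1{p+1}$, together with $\overline\Delta=(L\cdot C)^2-4dr(r-1)$. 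Squaring this gives the needed lower bound on $L\cdot C$ directly. The final bookkeeping, showing that the lower bound exceeds the upper bound minus one, is then an elementary inequality in $r,d,p$ that uses $d>(p+2)^2$ and $p\ge1$.
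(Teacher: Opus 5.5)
Your squeeze is exactly the paper's Case~I, and it is correct there. If $\mu_L(\shf G)\le-\frac1{p+1}$, then $-\frac1{p+1}\in[\mu_L(\shf G),p^+_{\shf Q})$, so $P_{\shf G}(-\frac1{p+1})<0$, and the bookkeeping closes. Your worry about large $r$ is unfounded: $\frac{r-p-2}{p+1}>-1$ is equivalent to $r>1$, so Lemma~\ref{lem:boundforr} is not needed.

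\textbf{The gap} is the complementary case $\mu_L(\shf G)>-\frac1{p+1}$, i.e.\ $L\cdot C<\frac{2(r-1)d}{p+1}$. This case really occurs, and there $-\frac1{p+1}$ can lie at or to the left of the smaller root. For example, in Example~\ref{threshold can be large} take $a=b=3$ and $p=1$. Then $d=18$, $r=2$, $C=E_1+E_2$, $L\cdot C=12$, $\epsilon_1(l)=\frac13$, and $P_{\shf G}(s)=2(3s+1)^2$. So $P_{\shf G}(-\frac12)=\frac12>0$, and your claimed bound $L\cdot C>(p+1)r+\frac{(r-1)d}{p+1}=13$ is false, although the conclusion $L\cdot C=12$ holds. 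With $a=3$, $b=2$ one gets $d=12$, $L\cdot C=10$, $P_{\shf G}(-\frac12)=0$, and the strict bound fails again.

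Neither proposed repair helps.
\begin{itemize}
\item The vertex estimate runs the wrong way. An upper bound on $L\cdot C$ bounds $\mu_L(\shf G)=-\frac{L\cdot C}{2(r-1)d}$ only from \emph{below}. The desired inequality $\mu_L(\shf G)<-\frac1{p+1}$ is in any case false in the example.
\item The fallback through \eqref{eq:rootthreshold} is empty in this case. Since $p^+_{\shf Q}\ge\mu_L(\shf G)>-\frac1{p+1}$ holds automatically, the inequality $\frac{2r}{L\cdot C+\sqrt{\overline\Delta}}<\frac1{p+1}$ comes for free. Squaring it yields the needed lower bound only when $L\cdot C<2r(p+1)$, and in the example $12\ge 8$.
\end{itemize}
In this case the hypothesis $\epsilon_1(l)<\frac1{p+1}$ carries no information at all.

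What is missing is a different source for the lower bound: the Hodge index inequality $(L\cdot C)^2\ge L^2C^2=4dr(r-1)$, combined with integrality. The paper proceeds as follows.
\begin{itemize}
\item It combines Hodge index with $L\cdot C<\frac{2(r-1)d}{p+1}$ and \eqref{eq:wallupperbound} to get $rp(p+2)<(r-1)d\le r(p+2)^2$.
\item It writes $(r-1)d=(p+2)\alpha+\delta$ with $0\le\delta\le p+1$, and sets $e=(p+2)r-\alpha$.
\item It deduces $0\le e\le\min(2r,p+2)$, where $e\le p+2$ uses $d>(p+2)^2$. Hence $(e+1)^2<4r(p+2)$.
\item The identity $4r\bigl((p+2)\alpha+\delta\bigr)-\bigl(r(p+2)+\alpha-1\bigr)^2=4r(p+2+\delta)-(e+1)^2$ then gives $(L\cdot C)^2\ge 4r(r-1)d>\bigl(r(p+2)+\alpha-1\bigr)^2$.
\item So $L\cdot C\ge (p+2)r+\alpha$, which matches the upper bound.
\end{itemize}
Without a step of this kind, your argument proves the lemma only under the extra assumption $\mu_L(\shf G)\le-\frac1{p+1}$.
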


\begin{proof}
We distinguish two cases.
Put
\[
\beta_{\shf Q}:=\mu_L(\shf G) =-\frac{L\cdot C}{2(r-1)d},
\]
the vertex of the quadratic $P_{\shf G}$.

\emph{Case I:} First suppose that
$\beta_{\shf Q}\le-\frac{1}{p+1}<p^+_{\shf Q}$.  Then evaluating $P_{\shf G}$ at $-1/(p+1)$, and using \eqref{eq:wallupperbound}, gives
\[(p+1)r+\frac{(r-1)d}{p+1}<L\cdot C\le (p+2)r+\frac{(r-1)d}{p+2}.\]
Consider the difference
\begin{align*}
0
&<\left((p+2)r+\frac{(r-1)d}{p+2}\right)
 -\left((p+1)r+\frac{(r-1)d}{p+1}\right)\\
&=r-\frac{(r-1)d}{(p+2)(p+1)}\\
&=1-(r-1)\left(\frac{d}{(p+2)(p+1)}-1\right)<1.
\end{align*}
This forces
\[L\cdot C=(p+2)r+\lfloor\frac{(r-1)d}{p+2}\rfloor.\]
Moreover, the inequality
\[r-\frac{(r-1)d}{(p+2)(p+1)}>0\]
leads to a slightly better bound for $r$:
\[r<\frac{d}{d-(p+2)(p+1)}.\]
Together with $r\geq2$, this gives the sharper bounds
\[r\le p+1 \quad \text{and} \quad d<2(p+2)(p+1).\]

\emph{Case II:}
Now suppose that
$-\frac{1}{p+1}<\beta_{\shf Q}=-\frac{L\cdot C}{2(r-1)d}$, or equivalently $L\cdot C<\frac{2(r-1)d}{p+1}$.  The Hodge index theorem gives
\[4r(r-1)d=L^2C^2\le (L\cdot C)^2< \frac{2(r-1)d}{p+1}\paren{(p+2)r+\frac{(r-1)d}{p+2}},\]
yielding the first inequality
\begin{equation}\label{eq:5.6}
rp(p+2)<(r-1)d\le r(p+2)^2,
\end{equation}
where the second inequality follows from Lemma \ref{lem:boundforr}. 

Performing division with remainder, write
\[
(r-1)d=(p+2)\alpha+\delta,
\]
where integers $\alpha=
\left\lfloor\frac{(r-1)d}{p+2}\right\rfloor
\text{ and }
0\le\delta\le p+1.$
Equation~\eqref{eq:5.6} implies that
\[
rp\le\alpha\le r(p+2).
\]
Set
\[
e=(p+2)r-\alpha.
\]
Then the above inequalities amount to
\begin{equation}\label{eq:ebounds}
0\le e\le 2r.
\end{equation}
Moreover, since $d>(p+2)^2$, we have
\[
(r-1)d>(r-1)(p+2)^2.
\]
Using
\[
(r-1)d
=(p+2)\alpha+\delta
=r(p+2)^2-(p+2)e+\delta,
\]
we obtain
\[
(p+2)e<(p+2)^2+\delta
<(p+2)^2+(p+2).
\]
Hence
\[
e\le p+2.
\]
Combining this with \eqref{eq:ebounds}, we deduce that
\[
e^2\le 2r(p+2).
\]
Furthermore, since $r\ge2$,
\[
2e+1
\le 2(p+2)+1
<4(p+2)
\le 2r(p+2).
\]
It follows that
\[
(e+1)^2<4r(p+2).
\]

We now compute
\[
\begin{aligned}
4r\bigl((p+2)\alpha+\delta\bigr)
 -\bigl(r(p+2)+\alpha-1\bigr)^2 =4r(p+2+\delta)-(e+1)^2
>0.
\end{aligned}
\]
Thus
\[
\bigl(r(p+2)+\alpha-1\bigr)^2
<
4r\bigl((p+2)\alpha+\delta\bigr)
=L^2C^2
\le (L\cdot C)^2.
\]
Therefore,
\[
L\cdot C>r(p+2)+\alpha-1.
\]
On the other hand, \eqref{eq:wallupperbound} and the integrality of
$L\cdot C$ give
\[
L\cdot C\le (p+2)r+\alpha.
\]
We conclude that
\[
L\cdot C
=(p+2)r+\alpha
=(p+2)r+
\left\lfloor\frac{(r-1)d}{p+2}\right\rfloor.
\]
\end{proof}

\begin{remark}
Lemma \ref{L.C} also provides a heuristic explanation for why $N_p$ holds for a general polarized abelian surface when $d>(p+2)^2$: for given $(X, L)$, the numbers $d, r$ and the curve class $C$ are determined accordingly, it is rare that the Diophantine equation $L\cdot C=rx+\lfloor\frac{(r-1)d}{x}\rfloor$ admits a solution, although it is possible. 
\end{remark}

\subsection{Proof of Theorem~\ref{main thm3}}
\begin{proof}[Proof of Theorem~\ref{main thm3}]
Since
\[
2(p+2)^2-2(p+2)+1>2(p+1)^2,
\]
Corollary~\ref{Ito18}, applied with $p$ replaced by $p-1$, gives
\[
\epsilon_1(l)<\frac{1}{p+1}.
\]

If $d\geq 2(p+2)^2$, Corollary~\ref{Ito18}, now applied with $p$, gives the desired conclusion.  Its exceptional case is excluded by the primitivity of $L$.  Thus it remains to consider
\[
2(p+2)^2-2(p+2)+1\leq d\leq 2(p+2)^2-1.
\]
Suppose to the contrary that $\epsilon_1(l)\geq\frac{1}{p+2}$. The no-wall value in \eqref{eq:4.8} is smaller than $1/(p+2)$, and Corollary~\ref{cor:rk1true} excludes a rank-one wall. Thus the first wall has rank $r\geq2$, and Lemma~\ref{L.C} gives
\[
L\cdot C=(p+2)r+
\left\lfloor\frac{(r-1)d}{p+2}\right\rfloor,
\]
where
\[
C^2=2r(r-1).
\]
Moreover, Lemma \ref{lem:boundforr} gives
\[
r\leq\frac{d}{d-(p+2)^2}.
\]

Assume first that $p\geq2$. The lower bound on $d$ yields
\[
2d\geq4(p+2)^2-4(p+2)+2>3(p+2)^2.
\]
Consequently, $\frac{d}{d-(p+2)^2}<3$, and hence $r=2$. Lemma \ref{L.C} now becomes
\[
L\cdot C =2(p+2)+\left\lfloor\frac{d}{p+2}\right\rfloor.
\]

We divide the argument into three cases. 

Case I: Assume 
\[
2(p+2)^2-2(p+2)+1 \leq d\leq 2(p+2)^2-(p+2)-1.
\]
Then
\[
L\cdot C=4(p+2)-2.
\]
Since $C^2=4$, the Hodge index theorem gives
\begin{eqnarray*}
\bigl(4(p+2)-2\bigr)^2
&=&(L\cdot C)^2 \geq L^2C^2=8d \\
&\geq & 8\bigl(2(p+2)^2-2(p+2)+1\bigr)\\ &=&\bigl(4(p+2)-2\bigr)^2+4,
\end{eqnarray*}
a contradiction.

Case II: Assume
\[
2(p+2)^2-(p+2)+1 \leq d\leq 2(p+2)^2-1.
\]
Then
\[
L\cdot C=4(p+2)-1.
\]
The Hodge index theorem similarly gives
\[
\begin{aligned}
\bigl(4(p+2)-1\bigr)^2
&=(L\cdot C)^2\geq 8d \geq \bigl(4(p+2)-1\bigr)^2+7,
\end{aligned}
\]
again a contradiction.

Case III: The only remaining degree is $d=2(p+2)^2-(p+2).$
In this case, Lemma~\ref{L.C} gives
\[
L\cdot C=4(p+2)-1.
\]
Set
\[
E=(p+2)C-L.
\]
Using $C^2=4$ and $L^2=2d$, we compute
\[
\begin{aligned}
E^2
&=(p+2)^2C^2-2(p+2)L\cdot C+L^2
=0,\\
L\cdot E
&=(p+2)L\cdot C-L^2
=p+2.
\end{aligned}
\]
On an abelian surface, a nonzero integral class of square zero having positive intersection with an ample class is numerically equivalent to a positive multiple of an elliptic curve. Thus there exist an elliptic curve $E_0\subseteq X$ and an integer $m\geq1$ such that
\[
E\equiv mE_0.
\]
It follows that
\[
L\cdot E_0=\frac{p+2}{m}\leq p+2,
\]
contrary to the hypothesis.

It remains to treat $p=1$. The relevant degrees are $13\leq d\leq17$. By Lemma \ref{lem:boundforr}, $r=2$, except that $r=3$ is also possible when $d=13$.

Suppose first that $r=2$. Lemma~\ref{L.C} gives
\[L\cdot C=6+\lfloor\tfrac{d}{3}\rfloor=\left\{\begin{array}{ll}
    10 & \textrm{if $d=13, 14$,}\\
    11 & \textrm{if $d=15, 16, 17$.}
   \end{array}\right.\]

If $d\neq 15$, then $(L\cdot C)^2<L^2C^2=8d$, contradicting the Hodge index theorem. 

If $d=15$, then the class $E=3C-L$ satisfies $E^2=0$ and $L\cdot E=3$, contrary to the hypothesis.

Finally, if $(d,r)=(13,3)$, Lemma \ref{lem:boundforr} gives
\[
L\cdot C
=9+\lfloor\tfrac{2d}{3}\rfloor
=17.
\]
Then $(L\cdot C)^2=17^2<L^2C^2=312$, contradicting the Hodge index theorem again.

Therefore $\epsilon_1(l)<\frac{1}{p+2}$, as claimed.
\end{proof}

\begin{proposition}\label{p=0}
Suppose that $d\geq7$ and that $X$ contains no elliptic curve $E$ with $L\cdot E\leq2$.  Then $\epsilon_1(l)\leq 1/2$. Equality holds if and only if $L\simeq M^{\otimes2}$ for an ample line bundle $M$ with $M^2=4$.  In this case $|M|$ has no fixed component.
\end{proposition}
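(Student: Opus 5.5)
Assume $\epsilon_1(l)>1/2$ and derive a contradiction. Then treat the boundary case $\epsilon_1(l)=1/2$ separately.

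\emph{Reduction to a rank-$r$ wall.} Suppose $\epsilon_1(l)\geq 1/2$ with $d\geq7$. The no-wall value $1/\sqrt d$ is less than $1/2$, since $d>4$. Corollary~\ref{cor:rk1true} with $p=0$ excludes a rank-one wall. Hence Proposition~\ref{prop:wallIq}(ii) applies, giving a rank $r\geq2$ with $C^2=2r(r-1)$. Lemma~\ref{lem:boundforr} with $p=0$ gives $r\leq d/(d-4)$, which forces $r=2$ because $d\geq7$ gives $d/(d-4)\leq 7/3<3$. Then $C^2=4$, and \eqref{eq:wallupperbound} reads $L\cdot C\leq 4+d/2$.

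\emph{Excluding $\epsilon_1(l)>1/2$.} Lemma~\ref{L.C} requires $p\geq1$, so I would instead use the lower bound $\epsilon_1(l)<1$, which holds because $L$ is basepoint-free by Reider (for $d\geq 3$ with no elliptic curve of degree $\leq1$). Running Case I/II of Lemma~\ref{L.C} with $p+1=1$ should pin $L\cdot C$ to $4+\lfloor d/2\rfloor$ or to a value differing from it by at most one. A cleaner route is to combine $(L\cdot C)^2\geq L^2C^2=8d$ with $L\cdot C\leq 4+d/2$, and with strict inequality when $\epsilon_1(l)>1/2$ by \eqref{bound 1}. Writing $m=L\cdot C$, we need $8d\leq m^2<(4+d/2)^2$. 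Consider the class $E=2C-L$. It satisfies
\[
E^2=16-4m+2d,\qquad L\cdot E=2m-2d.
\]
Using $m<4+d/2$ gives $E^2>16-16-2d+2d=0$ only in the wrong direction, so I have to be careful. Instead use $m\geq\sqrt{8d}$ together with the integrality of $m$. The window is $\sqrt{8d}\leq m\leq 4+d/2$, and I check whether $E^2\geq0$ or $E^2<0$ in it. If $E^2>0$ and $L\cdot E>0$, Lemma~\ref{lem:positivecone} says $E$ is ample. If $E^2=0$, then $E$ is a multiple of an elliptic curve of degree $L\cdot E=2m-2d$, which is small. The expected outcome is that in the strict window the integer $m$ forces either a contradiction with Hodge index or $E^2=0$ with $L\cdot E\leq2$, contradicting the hypothesis. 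Note that $E^2=0$ means $m=4+d/2$, which is the equality case, so the strict case should contradict Hodge index outright. Concretely, $m\leq 3+\lceil d/2\rceil$ should give $m^2<8d$ for $d\geq7$ after a short check. I expect this to fail only for small $d$, and I would check $d=7,8$ by hand.

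\emph{Equality case.} If $\epsilon_1(l)=1/2$, then \eqref{bound 2} gives $m=4+d/2$, so $d$ is even and $E=2C-L$ has $E^2=0$ and $L\cdot E=4+d/2\cdot 2-2d$, which is $8-d$ by direct computation. For $d\geq9$ this is negative while $E^2=0$, so $E$ or $-E$ is effective, a multiple of an elliptic curve. Then $L\cdot(-E)=d-8>0$, and I would use $C\cdot E$ to contradict effectivity or $C$ ample. For $d=8$ we get $E\equiv0$, so $L\equiv 2C$, and the divisibility argument of Corollary~\ref{Ito18} yields $L\simeq M^{\otimes2}$ with $M^2=4$. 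The converse follows from Corollary~\ref{Ito18}, which gives $\epsilon_1=1/2$. The main obstacle is the bookkeeping for $d=7,8$ and odd $d$. Finally, $|M|$ has no fixed component: a fixed component of a $(1,2)$ polarization would be an elliptic curve $F$ with $M\cdot F\leq1$, giving $L\cdot F\leq2$, which contradicts the hypothesis.
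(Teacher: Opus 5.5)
Your reduction to $r=2$ is correct, but there is a genuine gap for $d\ge 9$. When you apply Lemma~\ref{lem:boundforr} you only extract $r\le 2$. You miss that $2\le r\le d/(d-4)$ is impossible once $d\ge 9$, and without this the arguments you sketch for large $d$ do not close.

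\emph{Strict case.} Your claimed check ``$m\le 3+\lceil d/2\rceil\Rightarrow m^2<8d$'' is false. Since $(4+d/2)^2-8d=(d/2-4)^2$, the window $\sqrt{8d}\le m<4+d/2$ contains integers for large $d$; for example $d=15$, $m=11$ gives $121\ge 120$. Your expectation that the check fails only for small $d$ is therefore backwards.

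\emph{Equality case with $d\ge 9$.} The class $L-2C$ gives no contradiction. For $d=12$, $m=10$ the Hodge inequality holds, and $L-2C$ is isotropic with $L$-degree $4$ and $C$-degree $2$. This is compatible both with the hypothesis and with ampleness of $C$.

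The missing information is positional. Using only $P_{\shf G}(-\tfrac12)>0$ or $P_{\shf G}(-\tfrac12)=0$, where $P_{\shf G}(s)=2+ms+ds^2$, you cannot tell which root of $P_{\shf G}$ you are near. But $\epsilon_1(l)\ge\tfrac12$ means $-\tfrac12$ lies at or to the right of the \emph{larger} root $p^+_{\mathscr Q}$. That root in turn lies to the right of the vertex $\mu_L(\shf G)=-m/(2d)$. Hence $m\ge d$, and together with $m\le 4+d/2$ this forces $d\le 8$. In the example $d=12$, $m=10$, the roots are $-\tfrac12$ and $-\tfrac13$, so in fact $\epsilon_1(l)=\tfrac13$. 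This is exactly the content of Lemma~\ref{lem:boundforr}; the paper uses it in the packaged form of Corollary~\ref{Ito18} with $p=0$ for all $d\ge 8$.

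Once you restrict to $d\in\{7,8\}$, your remaining steps agree with the paper:
\begin{itemize}
\item For $d=7$, the strict bound gives $m\le 7$, contradicting $m^2\ge 56$, and the equality value $m=15/2$ is impossible.
\item For $d=8$, the strict bound gives $m\le 7$, contradicting $m^2\ge 64$. In the equality case $m=8$ gives $2C-L\equiv 0$, and the divisibility argument in $\Pic^0(X)$ finishes.
\end{itemize}

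Finally, your fixed-component step misidentifies the curve. If $|M|$ had a fixed part $F$, its moving part would be numerically $2E'$ for an elliptic curve $E'$. Then $M\equiv F+2E'$ with $F^2=0$ and $F\cdot E'=1$, so $M\cdot F=2$. It is $E'$, with $M\cdot E'=1$ and hence $L\cdot E'=2$, that contradicts the hypothesis. The paper instead deduces this from very ampleness of $L$ (Reider) together with Ohbuchi's lemma.
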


\begin{proof}
Suppose first that $d=7$ and $\epsilon_1(l)\geq1/2$. The no-wall value $1/\sqrt7$ is smaller than $1/2$, and the rank-one case is excluded by Corollary~\ref{cor:rk1true}.  Thus $r\geq2$, and Lemma~\ref{lem:boundforr} gives
\[
2\leq r\leq\tfrac{7}{7-4}=\tfrac73.
\]
Hence $r=2$.  If $\epsilon_1(l)>1/2$, then \eqref{bound 1} gives
\[
L\cdot C<2r+\tfrac{(r-1)d}{2}=\tfrac{15}{2},
\]
and therefore $L\cdot C\leq7$.  This contradicts the Hodge index theorem,
since
\[
(L\cdot C)^2\geq L^2C^2=14\cdot4=56.
\]
If $\epsilon_1(l)=1/2$, then \eqref{bound 2} gives
$L\cdot C=15/2$, which is impossible.  Thus
$\epsilon_1(l)<1/2$ when $d=7$.

Now suppose that $d\geq8$.  Corollary~\ref{Ito18}, applied with $p=0$,
gives $\epsilon_1(l)<1/2$ unless $d=8$ and
\[
L\simeq M^{\otimes2},\qquad M^2=4.
\]
Conversely, suppose that $L\simeq M^{\otimes2}$ with $M^2=4$.  Then $h^0(X,M)=2$, so $|M|$ has a base point. Let $m$ denote the numerical class of $M$. By Theorem~\ref{thm:caucci}, $\epsilon_1(m)=1$. It follows that
\[
\epsilon_1(l)=\epsilon_1(2m)=\tfrac{\epsilon_1(m)}2=\tfrac12.
\]
Finally, the hypothesis on elliptic curves and Reider's very-ampleness criterion imply that $L=M^{\otimes2}$ is very ample.  By \cite[Lemma~2]{Ohbuchi93}, this is equivalent to $|M|$ having no fixed component.
\end{proof}

We end the section with a remark. 
\begin{remark}
The inequality $\epsilon_1(l)<\frac{1}{p+2}$ need not hold under the hypotheses of Theorem~\ref{main thm}.  For example, a very general polarized abelian surface of type $(1,12)$ satisfies $\epsilon_1(l)=\frac{1}{3}$, while very general polarized abelian surfaces of types $(1,17)$, $(1,18)$, and $(1,20)$ satisfy $\epsilon_1(l)=\frac{1}{4}$; see \cite[Remark 4.2(2)]{Rojas22} and \cite[Theorem 1.3]{Ito26}. These surfaces contain no elliptic curves. Nevertheless, by Theorem \ref{main thm}, the polarization of type $(1,12)$ satisfies Property $N_1$, while those of types $(1,17)$, $(1,18)$, and $(1,20)$ satisfy Property $N_2$. Thus, the criterion of Jiang--Pareschi and Caucci is sufficient but not necessary. These examples illustrate the need for the Fourier--Mukai argument developed below.
\end{remark}

\section{A Fourier--Mukai presentation of \texorpdfstring{$M_L$}{ML} and cohomology vanishings}
\label{sec:mlcompute}

For the remainder of the proof, fix $p\geq1$ and assume
\begin{equation}\label{eq:stamainrange}
d\geq(p+2)^2+1,\qquad
\epsilon_1(l)\geq\frac1{p+2},\qquad
L\cdot E\geq p+3
\end{equation}
for every elliptic curve $E\subseteq X$.  The no-wall case is excluded by \eqref{eq:4.8}, and Corollary~\ref{cor:rk1true} excludes the rank-one wall.  We therefore use the higher-rank sequence \eqref{eq:wallIqshf}. Since $\epsilon_1(l)$ is independent of the point, we take $q=0$ from now on.  Reider's theorem implies that $L$ is very ample. 

Put
\begin{equation}\label{eq:abU}
a=(r-1)d-L\cdot C+r, \qquad b=rd-L\cdot C+r-1, \qquad U=\frac{(r-1)d}{p+2}-r.
\end{equation}
Since $d>(p+2)^2$, we always have
\[
U>(r-1)(p+2)-r\geq p>0.
\]
\subsection{A Fourier--Mukai presentation of \texorpdfstring{$M_L$}{ML}}
\label{sec:FM}

\begin{lemma}\label{lem:twisted-wall-IT0}
The numbers $a$ and $b$ are positive, and the semihomogeneous bundles $L\otimes\shf G$ and $L\otimes\shf F$ are $\IT(0)$.
\end{lemma}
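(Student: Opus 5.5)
The plan is to reduce everything to the index lemma for semihomogeneous bundles, Lemma~\ref{lem:index}(i), together with the ampleness criterion Lemma~\ref{lem:positivecone}. By \eqref{eq:wallIqcharac}, $\shf F$ and $\shf G$ are simple semihomogeneous bundles. Hence $L\otimes\shf F$ and $L\otimes\shf G$ are semihomogeneous, with
\[
\delta(L\otimes\shf F)=l-\tfrac{1}{r}C,\qquad \delta(L\otimes\shf G)=l-\tfrac{1}{r-1}C .
\]
It therefore suffices to show that these two $\mathbb Q$-classes have positive square and positive degree against $L$.

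First I compute the squares, using $L^2=2d$ and $C^2=2r(r-1)$ from \eqref{eq:C2}:
\[
\Bigl(l-\tfrac{C}{r-1}\Bigr)^2=2d-\tfrac{2L\cdot C}{r-1}+\tfrac{2r}{r-1}=\tfrac{2a}{r-1},\qquad
\Bigl(l-\tfrac{C}{r}\Bigr)^2=2d-\tfrac{2L\cdot C}{r}+\tfrac{2(r-1)}{r}=\tfrac{2b}{r}.
\]
Thus positivity of $a$ and $b$ is exactly positivity of these squares. As a consistency check, Riemann--Roch gives $a=\chi(L\otimes\shf G)$ and $b=\chi(L\otimes\shf F)$, matching Lemma~\ref{lem:FM-calculation}.

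Next I prove $a>0$. Under \eqref{eq:stamainrange} we have $\epsilon_1(l)\ge 1/(p+2)$ and a wall of rank $r\ge2$, so the uniform estimate \eqref{eq:wallupperbound} from the proof of Lemma~\ref{lem:boundforr} applies:
\[
L\cdot C\le (p+2)r+\tfrac{(r-1)d}{p+2}.
\]
Substituting this into the definition of $a$ gives
\[
a\ \ge\ (r-1)d\Bigl(1-\tfrac1{p+2}\Bigr)-(p+1)r=(p+1)U>0,
\]
where the last inequality is the observation $U>p>0$ recorded after \eqref{eq:abU}. Since $b=a+d-1$, we also get $b>a>0$.

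For the degree condition, I use $L\cdot C<(r-1)d+r$, which is just $a>0$. This gives
\[
L\cdot\Bigl(l-\tfrac{C}{r-1}\Bigr)=2d-\tfrac{L\cdot C}{r-1}>d-\tfrac{r}{r-1}\ge d-2>0.
\]
Since $\tfrac{C}{r}$ is a smaller multiple of $C$ and $L\cdot C>0$, it follows that $L\cdot(l-C/r)>L\cdot(l-C/(r-1))>0$. Lemma~\ref{lem:positivecone} then makes both $\delta$'s ample, and Lemma~\ref{lem:index}(i) yields that $L\otimes\shf G$ and $L\otimes\shf F$ are $\IT(0)$.

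The only nontrivial input is the upper bound \eqref{eq:wallupperbound} on $L\cdot C$ coming from $\epsilon_1(l)\ge 1/(p+2)$; the remaining steps are bookkeeping. The point to double-check is that Lemma~\ref{lem:index}(i) applies directly here: the twists are simple, since $\shf F$ and $\shf G$ are simple, and even otherwise the lemma covers arbitrary semihomogeneous bundles.
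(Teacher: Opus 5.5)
Your proposal is correct and follows the paper's own proof. You get $a\ge(p+1)U>0$ from \eqref{eq:wallupperbound}, compute the squares $2a/(r-1)$ and $2b/r$, check positive $L$-degree, and conclude with Lemma~\ref{lem:positivecone} and Lemma~\ref{lem:index}(i); the only difference is cosmetic, in that you deduce the degree positivity from $a>0$ and $L\cdot C>0$ rather than re-invoking \eqref{eq:wallupperbound} separately for each class.
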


\begin{proof}

The upper bound \eqref{eq:wallupperbound} gives
\begin{equation}\label{eq:apositive}
a\geq(p+1)U>0,
\qquad
b=a+d-1>0.
\end{equation}

The normalized first Chern classes of $L\otimes\shf G$ and $L\otimes\shf F$ are, respectively,
\[
L-\frac{C}{r-1},
\qquad
L-\frac Cr.
\]
Using \eqref{eq:C2}, their squares are
\begin{equation*}
\left(L-\frac{C}{r-1}\right)^2=\frac{2a}{r-1}>0,
\qquad
\left(L-\frac Cr\right)^2=\frac{2b}{r}>0.
\end{equation*}
Their intersections with $L$ are positive. Indeed,
\begin{align*}
2d-\frac{L\cdot C}{r-1}
&\geq d\left(2-\frac{1}{p+2}\right)
-\frac{(p+2)r}{r-1}\\
&>(p+2)^2\left(2-\frac{1}{p+2}\right)-2(p+2)>0,
\end{align*}
and
\begin{align*}
2d-\frac{L\cdot C}{r}
&\geq2d-(p+2)-\frac{(r-1)d}{(p+2)r}\\
&>d\left(2-\frac{1}{p+2}\right)-(p+2)>0.
\end{align*}
By Lemma~\ref{lem:positivecone}, both normalized classes are ample. The assertion follows from Lemma~\ref{lem:index}(i).
\end{proof}

Define the skew Pontryagin products
\begin{equation*}
\mathcal A:=L\widehatstar\shf G,
\qquad
\mathcal B:=L\widehatstar\shf F.
\end{equation*}

\begin{proposition}\label{prop:FM}
There is a short exact sequence of vector bundles
\begin{equation}\label{eq:ABM}
0\longrightarrow\mathcal A\longrightarrow\mathcal B \longrightarrow M_L\longrightarrow0.
\end{equation}
Moreover, $\mathcal A$ and $\mathcal B$ are semihomogeneous with
\begin{equation}\label{eq:shfABdata}
\begin{aligned}
\rk{\mathcal A}&=a,
&\delta(\mathcal A)&=\frac{rL-dC}{a},\\
\rk{\mathcal B}&=b,
&\delta(\mathcal B)&=\frac{(r-1)L-dC}{b}.
\end{aligned}
\end{equation}
\end{proposition}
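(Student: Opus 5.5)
The plan is to tensor the wall sequence \eqref{eq:wallIqshf} (with $q=0$) by $L$ and apply the Fourier--Mukai transform. This gives
\[
0\longrightarrow L\otimes\shf G\longrightarrow L\otimes\shf F\longrightarrow L\otimes\is 0\longrightarrow0 .
\]
By Lemma~\ref{lem:twisted-wall-IT0}, both $L\otimes\shf G$ and $L\otimes\shf F$ are $\IT(0)$. We then check that $L\otimes\is0$ is $\IT(0)$ as well. Since $L$ is very ample (in particular basepoint-free), $H^1(L\otimes\is0\otimes\alpha)=0$ for every $\alpha\in\Pic^0(X)$: the point $0$ imposes one condition on $|L\otimes\alpha|$, because $L\otimes\alpha$ is a translate of $L$ and is globally generated. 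The group $H^2$ vanishes because $H^2(L\otimes\alpha)=0$. All three terms are therefore $\IT(0)$, so $\FM$ carries the sequence to a short exact sequence of vector bundles on $\widehat X$:
\[
0\to\FM(L\otimes\shf G)\to\FM(L\otimes\shf F)\to\FM(L\otimes\is0)\to0 .
\]
We pull this back by the flat isogeny $\varphi_l$ and tensor with $L$, both exact operations. By \eqref{eq:skewFM} the three terms become $\mathcal A$, $\mathcal B$, and $L\widehatstar\is0$. The last one is $M_L$ by \eqref{eq:skewkernel}. This yields \eqref{eq:ABM}.

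For semihomogeneity we use the remark closing \S\ref{subsect:spp}. The bundles $\shf F$ and $\shf G$ are semihomogeneous by Proposition~\ref{prop:wallIq}(ii), and hence so are $L\otimes\shf F$ and $L\otimes\shf G$. Since these are $\IT(0)$, their transforms are semihomogeneous vector bundles. Pullback by $\varphi_l$ and twisting by $L$ preserve semihomogeneity, so $\mathcal A$ and $\mathcal B$ are semihomogeneous.

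The numerical data come from Lemma~\ref{lem:FM-calculation}, using the Chern characters in \eqref{eq:wallIqcharac}. For $\shf G$ we take $(n,-C,u)=(r-1,-C,r)$. The lemma gives
\[
\rk{\mathcal A}=(r-1)d-L\cdot C+r=a,\qquad c_1(\mathcal A)=rL-dC .
\]
For $\shf F$ we take $(r,-C,r-1)$. The lemma gives
\[
\rk{\mathcal B}=rd-L\cdot C+r-1=b,\qquad c_1(\mathcal B)=(r-1)L-dC .
\]
Dividing the first Chern classes by the ranks gives \eqref{eq:shfABdata}.

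As a consistency check, $b-a=d-1=\rk{M_L}$ and $c_1(\mathcal B)-c_1(\mathcal A)=-L=c_1(M_L)$.

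The main point requiring care is the $\IT(0)$ property of $L\otimes\is0$. Without it the transform of the sequence would not stay a short exact sequence of sheaves in degree $0$. The cleanest justification is the vanishing of $h^1_{\is0,l}$ at $x=1$, which holds because $\epsilon_1(l)<1$ (Theorem~\ref{thm:caucci}). This can be upgraded to vanishing for all $\alpha$ via the translation argument above, using global generation of every $L\otimes\alpha$. One should also note that \eqref{eq:skewFM} requires the transform to be a vector bundle, which the $\IT(0)$ property guarantees.
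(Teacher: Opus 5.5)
Your proposal is correct and takes essentially the same route as the paper. You tensor the wall sequence by $L$, note that all three terms are $\IT(0)$ (with $L\otimes\is 0$ handled by global generation of the translates $L\otimes\alpha$), apply the transform, pull back by $\varphi_l$ and twist by $L$, and then read off semihomogeneity and the ranks and first Chern classes from the discussion preceding Lemma~\ref{lem:FM-calculation} and from that lemma itself.
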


\begin{proof}
Tensor \eqref{eq:wallIqshf} by $L$ and apply the Fourier--Mukai transform.  Lemma~\ref{lem:twisted-wall-IT0}, together with the fact that $L\otimes\is 0$ is $\IT(0)$, shows that the transformed triangle is a short exact sequence of locally free sheaves.  Here $L\otimes\is 0$ is $\IT(0)$ because $L$ is globally generated and every twist $L\otimes\alpha$ is a translate of $L$.  Pulling back by $\varphi_l$ and tensoring by $L$ gives
\[
0\longrightarrow\mathcal A\longrightarrow\mathcal B
\longrightarrow L\widehatstar\is 0\longrightarrow0.
\]
Equation \eqref{eq:skewkernel} identifies the last term with $M_L$.

The discussion preceding Lemma~\ref{lem:FM-calculation} shows that $\mathcal A$ and $\mathcal B$ are semihomogeneous. Applying Lemma~\ref{lem:FM-calculation} to the Chern characters of $\shf{F}$ and $\shf{G}$ in \eqref{eq:wallIqcharac} gives \eqref{eq:shfABdata}. 
\end{proof}

\subsection{The numerical estimate}\label{sec:numerical}

The following lemma is the numerical core of the proof.  
\begin{lemma}\label{lem:numerical}
Let $p\geq1$, $d\geq(p+2)^2+1$, and $r\geq2$ be integers.  Suppose that $L\cdot C$ is a positive integer satisfying
\begin{equation}\label{eq:arith-hodge}
(L\cdot C)^2\geq4dr(r-1)
\quad \text{ and }\quad 
L\cdot C\leq(p+3)r+U.
\end{equation}
Define $a,b$ by \eqref{eq:abU} and put
\begin{equation}\label{eq:Theta}
\Theta(p,d,r,L\cdot C)
=2+\frac{2r-L\cdot C}{a}
+p\frac{2r-2-L\cdot C}{b}.
\end{equation}
Then $\Theta>0$, except possibly for
\begin{equation}\label{eq:arith-exceptions}
(p,d,r,L\cdot C)=(1,10,2,9),\quad(1,12,2,10),\quad(2,17,2,12).
\end{equation}
\end{lemma}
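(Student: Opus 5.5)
Write $m=L\cdot C$. The statement is purely arithmetic, so the plan is to clear denominators and reduce to a polynomial inequality in the variables $(p,d,r,m)$ on an explicit finite-width region. Since $a>0$ and $b=a+d-1>0$ (from the upper bound on $m$, as in Lemma~\ref{lem:twisted-wall-IT0}), positivity of $\Theta$ is equivalent to
\[
N:=2ab+(2r-m)\,b+p\,(2r-2-m)\,a>0 .
\]
Substitute $a=(r-1)d-m+r$ and $b=a+d-1$. Then $N$ is a quadratic polynomial in $m$ whose coefficients are polynomials in $d$ and $r$, with $p$ a parameter. Its leading coefficient in $m$ is $2+1+p>0$, so $N$ is a convex function of $m$. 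Hence it suffices to bound $N$ from below on the admissible interval
\[
m_0:=2\sqrt{dr(r-1)}\le m\le (p+3)r+U=m_1 .
\]

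In this interval $a$ is small compared with $b$: $a\ge(p+1)U-r$ is of order $(r-1)d/(p+2)$, while $b\approx a+d$. The two negative terms are controlled as follows:
\begin{itemize}
\item $(m-2r)b$ is compared against $2ab$, which amounts to showing $2a>m-2r$ with some room to spare;
\item $p(m-2r+2)a$ is compared against the remaining part of $2ab$, namely $b>p(m-2r+2)/2\cdot$ (fraction).
\end{itemize}
Concretely, I would evaluate $N$ at the endpoint $m=m_1$, where $a$ is smallest, and check its derivative in $m$ there. If $N$ is decreasing on the interval, positivity at $m_1$ suffices. At $m=m_1$ we have $a=(r-1)d(p+1)/(p+2)-(p+3)r+r$, an explicit affine function of $d$. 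This makes $N(m_1)$ a quadratic in $d$ with positive leading coefficient of order $(r-1)^2d^2$. Because $m\le m_1\ll 2(r-1)d/(p+1)$, the negative contributions are only of order $r\,d\cdot(p+2)r$. For $d\ge(p+2)^2+1$ this shows $N(m_1)>0$ once $r$ or $d$ is moderately large.

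Lemma~\ref{lem:boundforr} (its hypotheses hold here) gives $r\le d/(d-(p+2)^2)$. Combined with the Hodge bound $m_0\le m_1$, which forces $4dr(r-1)\le((p+3)r+U)^2$, this confines $(d,r)$ to a region where $r$ is small unless $d$ is close to $(p+2)^2$. I would split into the following cases.
\begin{itemize}
\item \emph{$r=2$.} Here $U=d/(p+2)-2$, and the window for $m$ is $[\sqrt{8d},\,2p+4+d/(p+2)]$. Directly expanding $N$ gives a condition of the form $N>0$ unless $d\le c(p+2)^2$ for a small constant $c$ together with $m$ near the top of the window. For $p\ge3$ this has no integer solutions. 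For $p=1,2$ it leaves finitely many $(d,m)$ to check by hand, which should produce exactly the three listed exceptions.
\item \emph{$r\ge3$.} Here $d\le\frac{r}{r-1}(p+2)^2$ stays near $(p+2)^2$. Since $(r-1)d$ grows faster than the negative terms, I expect $N>0$ uniformly.
\end{itemize}

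The main obstacle is the $r=2$, small-$p$ regime, because there the inequality is genuinely tight and fails in three cases. I would handle it by writing $N$ explicitly as a polynomial in $(d,m)$ for $p=1$ and $p=2$, intersecting with the integrality of $m$ and the Hodge constraint $m^2\ge8d$, and enumerating. That should leave precisely $(1,10,2,9)$, $(1,12,2,10)$ and $(2,17,2,12)$. To make the general-$p$ estimate clean enough that only finitely many cases survive, I would try first to bound $(m-2r)b\le 2ab-\text{(margin)}$ using $2a-(m-2r)\ge 2(p+1)U-m_1+2r$, which is of order $(2p+1)U-(p+1)r$ and hence positive.
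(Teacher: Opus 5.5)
Your reduction to the upper endpoint $m_1=(p+3)r+U$ is sound, and it is also the paper's first step. The paper simply notes that both fractions in $\Theta$ decrease in $L\cdot C$; your convexity argument works too, since $N'(m_1)<0$. Beyond that, though, there is no actual argument. The decisive claims (``once $r$ or $d$ is moderately large'', ``I expect $N>0$ uniformly'', ``should produce exactly the three exceptions'') are the content of the lemma, and the facts you lean on are wrong in the critical range.

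\textbf{Unavailable bound on $r$.} Lemma~\ref{lem:boundforr} cannot be used here. The statement is purely arithmetic, and $r\le d/(d-(p+2)^2)$ rests on $L\cdot C\ge 2(r-1)d/(p+2)$, which comes from $\epsilon_1(l)\ge 1/(p+2)$ and is not a hypothesis. Nor does the Hodge condition confine $(d,r)$. Write $m_1=(p+2)r+\frac{(r-1)d}{p+2}$ as a sum of two terms with product $r(r-1)d$; then $m_1^2\ge 4dr(r-1)$ holds automatically by AM--GM. Only integrality of $L\cdot C$ can empty the window.

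\textbf{Wrong heuristic.} The claim $m_1\ll 2(r-1)d/(p+1)$ fails just above $d=(p+2)^2$. For example, $(p,d,r)=(2,17,2)$ gives $m_1=49/4>34/3$.

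\textbf{$r\ge 3$ is not uniformly positive.} For $(p,d,r)=(1,10,3)$ one has $m_1=47/3$, $a=22/3$, $b=49/3$, and
\[
\Theta(m_1)=2-\frac{29}{22}-\frac57=-\frac5{154}<0 .
\]
Moreover $\Theta$ changes sign inside the real window $[\sqrt{240},\,47/3]$. This case disappears only because that window contains no integer.

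\textbf{The missing idea} is a reduction that makes the endpoint check uniform in $p$ and $r$. At $m=m_1$ one has $a=(p+1)U$ (not $\frac{(p+1)(r-1)d}{p+2}-(p+2)r$) and $b=\frac{(r(p+1)+1)(U+1)}{r-1}$. Hence
\[
\Theta(m_1)=2-\frac1{p+1}-\frac rU-\frac{p(r-1)\bigl(U+r(p+1)+2\bigr)}{\bigl(r(p+1)+1\bigr)(U+1)},
\]
whose $U$-derivative $\frac r{U^2}+\frac{p(r-1)}{(U+1)^2}$ is positive. So the endpoint value increases with $d$, and it suffices to treat $d=(p+2)^2+\delta$ with $\delta\in\{1,2,3,4\}$.

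For these $\delta$, clearing denominators gives $\rho P_\delta(\rho,t)$ with $\rho=r-1$ and $t=p-1$. Here $P_\delta$ is increasing in $\rho$, and $P_\delta(1,t)$ is an explicit polynomial whose sign is easily determined. This leaves only $(p,d,r)\in\{(1,10,2),(1,10,3),(1,11,2),(1,12,2),(2,17,2)\}$. Integrality together with $(L\cdot C)^2\ge 4dr(r-1)$ then eliminates $(1,10,3)$ and $(1,11,2)$, and forces $L\cdot C=9,10,12$ in the other three.

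Without monotonicity in $d$, or an equivalent device, your case split has no way to reduce to finitely many $(p,d,r)$ uniformly in $p$. An argument confined to $r\le d/(d-(p+2)^2)$ would prove only a weaker statement. That weaker statement would still suffice for Proposition~\ref{prop:T-vanishing}, where Lemma~\ref{lem:boundforr} does apply.
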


\begin{proof}
Step 1: We reformulate $\Theta$ and drop the term $L\cdot C$. 

As in \eqref{eq:apositive}, $U>p>0$, $a\geq(p+1)U>0$, and $b=a+d-1>0$.

For fixed $p,d,r$, both nonconstant summands in \eqref{eq:Theta} are strictly decreasing functions of $L\cdot C$.  Indeed,
\begin{align*}
\frac{\partial}{\partial(L\cdot C)}
\frac{2r-L\cdot C}{(r-1)d-L\cdot C+r}
&=\frac{r-(r-1)d}{a^2}<0,\\
\frac{\partial}{\partial(L\cdot C)}
\frac{2r-2-L\cdot C}{rd-L\cdot C+r-1}
&=\frac{r-1-rd}{b^2}<0.
\end{align*}
It is therefore enough to substitute the upper bound $L\cdot C=(p+3)r+U$.

Direct substitution gives
\begin{equation}\label{eq:Theta-boundary}
\Theta
=2-\frac{1}{p+1}-\frac rU
-\frac{p(r-1)(U+r(p+1)+2)}{(r(p+1)+1)(U+1)}.
\end{equation}
The derivative of the right-hand side with respect to $U$ is
\begin{equation}\label{eq:Theta-increasing}
\frac r{U^2}+\frac{p(r-1)}{(U+1)^2}>0.
\end{equation}
Thus the boundary value is strictly increasing in $d$.

Step 2: We take $d=(p+2)^2+4$ and show that there is no outlier case. Put
\[
\rho=r-1\geq1,
\qquad
t=p-1\geq0.
\]
After multiplying \eqref{eq:Theta-boundary} by the positive denominator
\[
(p+2)^2(p+1)U(r(p+1)+1)(U+1),
\]
the numerator is $\rho P_4(\rho,t)$, where
\begin{align*}
  P_4(\rho,t)&=A\rho^2+B\rho+D,\\
A&=4(t+2)^2(t^2+5t+10)>0,\\
B&=(t+3)(t^5+10t^4+49t^3+128t^2+188t+112)>0,\\
    D&=-2p^5-16p^4-62p^3-132p^2-136p-48.
\end{align*}
It follows that
\begin{align*}
P_4(1,t) &=t^6+11t^5+57t^4+165t^3+274t^2+256t+100>0\\
 \implies     P_4(\rho,t)&=P_4(1,t)+A(\rho^2-1)+B(\rho-1)>0.
\end{align*}
By \eqref{eq:Theta-increasing}, this proves $\Theta>0$ for every
$d\geq(p+2)^2+4$.

Step 3: It remains to consider $d=(p+2)^2+\delta$ with $\delta\in\{1,2,3\}$.  With the same positive denominator, the numerator is $\rho P_\delta(\rho,t)$, where
\[
P_\delta(\rho,t)=A_\delta(t)\rho^2+B_\delta(t)\rho+D_\delta(t).
\]
The coefficients needed below are
\begin{align*}
A_1(t)&=t^4+9t^3+31t^2+48t+28,\\
B_1(t)&=t^6+13t^5+70t^4+197t^3+299t^2+223t+57,\\
P_1(1,t)&=t^6+11t^5+45t^4+69t^3-35t^2-218t-176;
\end{align*}
\begin{align*}
A_2(t)&=2t^4+18t^3+64t^2+104t+64,\\
B_2(t)&=t^6+13t^5+73t^4+223t^3+386t^2+356t+132,\\
P_2(1,t)&=t^6+11t^5+49t^4+101t^3+62t^2-86t-110;
\end{align*}
\begin{align*}
A_3(t)&=3t^4+27t^3+99t^2+168t+108,\\
B_3(t)&=t^6+13t^5+76t^4+249t^3+477t^2+507t+225,\\
P_3(1,t)&=t^6+11t^5+53t^4+133t^3+165t^2+72t-18.
\end{align*}
Every $A_\delta(t)$ and $B_\delta(t)$ is positive for $t\geq0$, so $P_\delta(\rho,t)$ is strictly increasing in $\rho\geq1$. 
It remains to determine the sign of \(P_\delta(1,t)\). For every
integer \(u\geq0\), we have
\begin{align*}
P_1(1,u+2)
&=u^6+23u^5+215u^4+1029u^3+2579u^2+2982u+936,\\
P_2(1,u+1)
&=u^6+17u^5+119u^4+427u^3+784u^2+598u+28,\\
P_3(1,u+1)
&=u^6+17u^5+123u^4+475u^3+1007u^2+1074u+417,
\end{align*}
which are all positive. We are reduced to consider the following finite list of cases:
\[
\begin{array}{c|c|c|c}
\delta&(t,\rho)&P_\delta(\rho,t)&\text{first positive value in }\rho\\ \hline
1&(0,1)&-176&P_1(3,0)=162\\
1&(0,2)&-35&\\
1&(1,1)&-303&P_1(2,1)=908\\
2&(0,1)&-110&P_2(2,0)=214\\
3&(0,1)&-18&P_3(2,0)=531.
\end{array}
\]
Therefore \eqref{eq:Theta-boundary} can be non-positive only when
\[
(p,d,r)\in
\{(1,10,2),(1,10,3),(2,17,2),(1,11,2),(1,12,2)\}.
\]

Step 4: We combine the integrality of $L\cdot C$ with \eqref{eq:arith-hodge} to determine the remaining cases. For $(p,d,r)=(1,10,2)$ one obtains $L\cdot C=9$.  For $(1,10,3)$ the upper bound gives $L\cdot C\leq15$, whereas $(L\cdot C)^2\geq240$, which is impossible.  For $(2,17,2)$ one obtains $L\cdot C=12$.  For $(1,11,2)$ one has $L\cdot C\leq9$ but $(L\cdot C)^2\geq88$, which is impossible. Finally, for $(1,12,2)$ one obtains $L\cdot C=10$. This proves the lemma.
\end{proof}

\subsection{The two cohomology vanishings}\label{sec:cohomology}

We keep the notation of Proposition~\ref{prop:FM}.

\subsubsection{The \texorpdfstring{$\mathcal B$}{B}-term}

\begin{proposition}\label{positivity of B term}
The $\mathbb Q$-twisted bundle $\mathcal B\left\langle\frac1{p+1}l\right\rangle$ is $\IT(0)$.
\end{proposition}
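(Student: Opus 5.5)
The plan is to reduce the $\IT(0)$ property of $\mathcal B\left\langle\frac1{p+1}l\right\rangle$ to an ampleness statement for a single $\mathbb R$-divisor class, and then to check that ampleness using the wall data of Proposition~\ref{prop:1stwall} and the bound \eqref{eq:wallupperbound}.

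\textbf{Step 1: reduction to ampleness.} Write $x=\frac1{p+1}$. By definition, $\mathcal B\langle xl\rangle$ is $\IT(0)$ if and only if $\mu_{p+1}^*\mathcal B\otimes L^{\otimes(p+1)}$ is $\IT(0)$. This bundle is semihomogeneous: pullback by an isogeny and tensoring by a line bundle both preserve semihomogeneity. Since $\mu_{p+1}^*$ multiplies $\ch_1$ by $(p+1)^2$, its normalized first Chern class is
\[
(p+1)^2\bigl(\delta(\mathcal B)+xl\bigr).
\]
Lemma~\ref{lem:index}(i) therefore reduces the claim to showing that $\delta(\mathcal B)+xl$ is ample. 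Using \eqref{eq:shfABdata} and multiplying by $b>0$, it suffices to show that
\[
D=\lambda L-dC,\qquad \lambda:=r-1+\frac{b}{p+1},
\]
is ample. By Lemma~\ref{lem:positivecone}, this amounts to checking $D^2>0$ and $D\cdot L>0$.

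\textbf{Step 2: the two intersection numbers.} Put $m=L\cdot C$. Using $L^2=2d$ and $C^2=2r(r-1)$ from \eqref{eq:C2}, we get
\[
D\cdot L=d(2\lambda-m),\qquad D^2=2d\bigl(\lambda^2-m\lambda+dr(r-1)\bigr).
\]
The quadratic $x^2-mx+dr(r-1)$ has roots $\frac{m\pm\sqrt{\overline\Delta}}2$. By \eqref{eq:rootthreshold}, the larger root equals
\[
\frac{m+\sqrt{\overline\Delta}}{2}=\frac{r}{\epsilon_1(l)}\le (p+2)r,
\]
where the inequality uses the standing assumption $\epsilon_1(l)\ge\frac1{p+2}$. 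This larger root is at least $m/2$. Hence both positivity conditions follow as soon as $\lambda>(p+2)r$, that is, as soon as
\[
b>(p+1)\bigl((p+1)r+1\bigr).
\]

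\textbf{Step 3: the numerical inequality.} This is the only place where the hypotheses $d>(p+2)^2$ and \eqref{eq:wallupperbound} enter. Recall $b=rd-m+r-1$ and $m\le (p+2)r+\frac{(r-1)d}{p+2}$. Substituting gives
\[
b\ge d\Bigl(r-\frac{r-1}{p+2}\Bigr)-(p+1)r-1.
\]
The coefficient of $d$ is positive, so we may use $d>(p+2)^2$ to obtain
\[
b>(p+2)(p+1)r+(p+2)-(p+1)r-1=(p+1)^2r+(p+1).
\]
The right-hand side is exactly the required bound, with strict inequality.

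I expect the main obstacle to be Step 2. The key point there is to recognize that the relevant root of the quadratic is precisely $r/\epsilon_1(l)$, so that the assumption $\epsilon_1(l)\ge\frac1{p+2}$ is what makes $\lambda$ large enough. Once that identification is made, the rest is routine bookkeeping.
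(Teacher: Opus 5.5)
Your proof is correct, but it takes a different route from the paper's.

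\textbf{The paper's route.} The paper never computes with $\delta(\mathcal B)$. It uses Mukai's theorem to write $\shf F^\vee\simeq\varphi_*\mathcal O_{X'}(M)$ for an isogeny $\varphi\colon X'\to X$ of degree $r$. Then \eqref{eq:skewFM} and \cite[Proposition~4.1]{Ito22} give a chain of equivalences: $\mathcal B\langle\frac1{p+1}l\rangle$ is $\IT(0)$ if and only if $\shf F\langle\frac1{p+2}l\rangle$ is, hence if and only if $\varphi^*L-(p+2)M$ is ample on $X'$. Ampleness is then checked from $D^2\geq2\bigl(d-(p+2)^2\bigr)$, which comes from \eqref{eq:wallupperbound}, and from $D\cdot M>0$, which comes from the Hodge bound.

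\textbf{Your route.} You stay on the $\mathcal B$ side and use only results the paper has already established. These are the semihomogeneity of $\mathcal B$ and the data \eqref{eq:shfABdata}, then Lemma~\ref{lem:index}(i), and finally Lemma~\ref{lem:positivecone}. Lemma~\ref{lem:index}(i) as stated covers the non-simple bundle $\mu_{p+1}^*\mathcal B\otimes L^{\otimes(p+1)}$, so the reduction is valid. Identifying the larger root with $r/\epsilon_1(l)$ via \eqref{eq:rootthreshold} is a clean way to feed in the hypothesis $\epsilon_1(l)\ge\frac1{p+2}$. Your Step~3 arithmetic is right, and the strict inequality comes from $d>(p+2)^2$.

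\textbf{What each route buys.} Your argument avoids both the transfer of $\mathbb Q$-twisted $\IT(0)$ across the Fourier--Mukai transform and Mukai's isogeny realization. The paper's chain of equivalences gives an exact criterion, not just a sufficient one. That exactness is what the subsequent remark needs in order to conclude that the analogous statement fails for $\mathcal A$.

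\textbf{The two conditions coincide.} Substitute $m=\frac{r}{\epsilon_1(l)}+(r-1)d\,\epsilon_1(l)$. Your condition $\lambda>r/\epsilon_1(l)$ then factors as
\[
\bigl(d\,\epsilon_1(l)-(p+2)\bigr)\bigl(r-(r-1)\epsilon_1(l)\bigr)>0.
\]
The second factor is positive because $\epsilon_1(l)\le1$, so the condition is just $d\,\epsilon_1(l)>p+2$. This is also exactly when $\varphi^*L-(p+2)M\equiv\varphi^*\bigl(L-\frac{p+2}{r}C\bigr)$ is ample. Since $\epsilon_1(l)\geq\frac1{p+2}$ and $d>(p+2)^2$, either side can therefore be verified in one line.
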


\begin{proof}
The bundle $\shf{F}^{\vee}$ is simple semihomogeneous. Hence, by \cite[Theorem 5.8]{Mukai78} there exists an isogeny $\varphi:X'\to X$ and a line bundle $\mathcal O_{X'}(M)$ on $X'$ such that
\[
\varphi_*\mathcal O_{X'}(M)\simeq\shf F^\vee.
\]
Comparing Chern characters and using the projection formula gives
\begin{equation}\label{eq:Mukaidata}
\deg\varphi=r,\qquad
M^2=2(r-1),\qquad
\varphi^*L\cdot M=L\cdot C.
\end{equation}
In particular, $M^2>0$ and $M\cdot\varphi^*L>0$, so Lemma~\ref{lem:positivecone}, applied on $X'$, shows that $M$ is ample.

By the identity \eqref{eq:skewFM} and \cite[Proposition 4.1]{Ito22}, we deduce that
\begin{align*}
\mathcal B\left\langle\frac1{p+1}l\right\rangle
\text{ is $\IT(0)$}
&\Longleftrightarrow
\shf F\left\langle\frac1{p+2}l\right\rangle
\text{ is $\IT(0)$}\\
&\Longleftrightarrow
\mathcal O_{X'}(-M)\left\langle\frac1{p+2}\varphi^*l\right\rangle
\text{ is $\IT(0)$}\\
&\Longleftrightarrow
D:=\varphi^*L-(p+2)M\text{ is ample}.
\end{align*}
On the other hand, using \eqref{eq:wallupperbound} and \eqref{eq:Mukaidata}, we
have that
\begin{align*}
D^2
&=2rd-2(p+2)L\cdot C+2(p+2)^2(r-1)\\
&\geq2\bigl(d-(p+2)^2\bigr)>0.
\end{align*}
The Hodge lower bound gives
\[
L\cdot C\geq2\sqrt{dr(r-1)}
>2(p+2)(r-1),
\]
where the strict inequality uses the assumption $d>(p+2)^2$.  Consequently,
\[
D\cdot M=L\cdot C-2(p+2)(r-1)>0.
\]
Since $M$ is ample, Lemma~\ref{lem:positivecone} shows that $D$ is ample. This completes the proof.
\end{proof}
\begin{remark}
The analogous assertion unfortunately fails for $\mathcal A=L\widehatstar\shf G$. Indeed, let $\varphi$ be an isogeny such that $\varphi_*\sshf{X'}(N)\iso\shf G^{\vee}$. Then $\paren{\varphi^*L-(p+2)N}\cdot N=L\cdot C-2(p+2)r\leq0$ by \eqref{eq:wallupperbound} and \eqref{eq:5.1}, so $\varphi^*L-(p+2)N$ is not ample.
\end{remark}

\begin{corollary}\label{prop:B-vanishing}
For $1\leq k\leq p+1$ and $h\geq1$, we have
\[
H^1(X,\bigwedge^{k}\mathcal B\otimes L^{\otimes h})=0.
\]
\end{corollary}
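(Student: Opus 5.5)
The plan is to derive the vanishing from Proposition~\ref{positivity of B term} together with the multiplicative behaviour of IT(0) and GV under tensor products of $\mathbb Q$-twisted sheaves (Proposition~\ref{positivity of tensor}). Since IT(0) implies vanishing of all higher cohomology of every $\Pic^0$-twist, it is enough to show that $\bigwedge^k\mathcal B\otimes L^{\otimes h}$ is IT(0) for $1\le k\le p+1$ and $h\ge1$. We know that $\mathcal B\langle\frac1{p+1}l\rangle$ is IT(0). Iterating Proposition~\ref{positivity of tensor}, the tensor power $\mathcal B^{\otimes k}\langle\frac{k}{p+1}l\rangle$ is IT(0), because an IT(0) $\mathbb Q$-twisted sheaf is in particular GV and $\mathcal B$ is locally free.

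The next step is to pass from tensor powers to exterior powers. In characteristic zero, $\bigwedge^k\mathcal B$ is a direct summand of $\mathcal B^{\otimes k}$, via the antisymmetrization projector. The IT(0) condition for $\mathbb Q$-twisted sheaves is defined through $\mu_b^*(\cdot)\otimes L^{\otimes ab}$, and this operation commutes with direct sums. Moreover, cohomology vanishing passes to direct summands. Hence $\bigwedge^k\mathcal B\langle\frac{k}{p+1}l\rangle$ is IT(0).

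Finally we add the twist by $L^{\otimes h}$. We have $\frac{k}{p+1}\le 1\le h$ for $k\le p+1$. Write $\bigwedge^k\mathcal B\otimes L^{\otimes h}$ as $\bigwedge^k\mathcal B\langle\frac{k}{p+1}l\rangle\otimes\mathcal O_X\langle(h-\frac{k}{p+1})l\rangle$. The second factor is GV, since $(h-\frac k{p+1})L$ is nef; this covers the boundary case $k=p+1$, $h=1$, where the twist is $0$ and $\mathcal O_X$ is GV. Proposition~\ref{positivity of tensor} then shows that $\bigwedge^k\mathcal B\langle hl\rangle=\bigwedge^k\mathcal B\otimes L^{\otimes h}$ is IT(0). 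In particular $H^1(X,\bigwedge^k\mathcal B\otimes L^{\otimes h})=0$.

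The main point to check is the direct-summand step, i.e.\ that IT(0) of $\mathbb Q$-twisted sheaves is inherited by direct summands. This is immediate from the definition, since $\mu_b^*$ and tensoring with line bundles preserve direct sum decompositions. The same observation gives the standard compatibility of $\mathbb Q$-twists with tensor products that is needed to apply Proposition~\ref{positivity of tensor} repeatedly. No further obstacle is expected.
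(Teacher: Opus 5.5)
Your proposal is correct and follows essentially the paper's argument. The paper writes $\mathcal B^{\otimes k}\otimes L^{\otimes h}=\mathcal B\langle\frac1{p+1}l\rangle^{\otimes k}\otimes\mathcal O_X\langle\frac{h(p+1)-k}{p+1}l\rangle$, applies Caucci's tensor-product result repeatedly to get $\IT(0)$, and only then passes to the direct summand $\bigwedge^k\mathcal B\otimes L^{\otimes h}$ at the level of $H^1$; you instead pass to the summand before adding the final GV twist, which is an immaterial reordering.
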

\begin{proof}
We have
\[
{\mathcal B}^{\otimes k}\otimes L^{\otimes h}
=\mathcal B\left\langle\frac1{p+1}l\right\rangle^{\!\otimes k}
\otimes\ox\left\langle\frac{h(p+1)-k}{p+1}l\right\rangle.
\]
By Proposition~\ref{positivity of B term}, $\mathcal B\langle\frac{1}{p+1}l\rangle$ is $\IT(0)$.  Since $h(p+1)-k\geq0$, the second factor is GV.  Repeatedly applying the tensor-product theorem for $\mathbb Q$-twisted GV sheaves \cite[Proposition~3.4]{Caucci20} shows that $\mathcal B^{\otimes k}\otimes L^{\otimes h}$ is $\IT(0)$, and hence $H^1(X,\mathcal B^{\otimes k}\otimes L^{\otimes h})=0$.  The desired vanishing follows because $\bigwedge^k\mathcal B\otimes L^{\otimes h}$ is a direct summand of $\mathcal B^{\otimes k}\otimes L^{\otimes h}$.
\end{proof}

\subsubsection{The \texorpdfstring{$\mathcal A$}{A}-term}

For integers $1\leq k\leq p+1$ and $h\geq1$, put
\begin{equation*}
\mathcal T_{h,k}
=\mathcal A\otimes\bigwedge^{k-1}\mathcal B\otimes L^{\otimes h}.
\end{equation*}
Since $b=a+d-1\geq d>p$, the exterior power above
is nonzero throughout the required range.

\begin{lemma}\label{lem:Tsemihomogeneous}
The bundle $\mathcal T_{h,k}$ is semihomogeneous, and
\begin{equation}\label{eq:Dhk}
\delta(\mathcal T_{h,k})
=hL+\frac{rL-dC}{a}
+(k-1)\frac{(r-1)L-dC}{b}.
\end{equation}
\end{lemma}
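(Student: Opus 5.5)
The plan is to deduce both assertions formally from Proposition~\ref{prop:FM} and the elementary closure properties of semihomogeneous bundles recorded in Subsection~\ref{subsec: semihomogeneous}. No new geometric input should be needed.

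\emph{Semihomogeneity.} By Proposition~\ref{prop:FM}, $\mathcal A$ and $\mathcal B$ are semihomogeneous vector bundles, of ranks $a$ and $b$ respectively. For $0\le k-1\le p$, the exterior power $\bigwedge^{k-1}\mathcal B$ is nonzero, because $b\ge d>p\ge k-1$. It is therefore semihomogeneous by the closure property for exterior powers. For $k=1$ it is simply $\ox$, which is trivially semihomogeneous. The line bundle $L^{\otimes h}$ is semihomogeneous, since $t_q^*L^{\otimes h}\simeq L^{\otimes h}\otimes\alpha_q$ with $\alpha_q=t_q^*L^{\otimes h}\otimes L^{-\otimes h}\in\Pic^0(X)$; this is just the theorem of the square. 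Applying the tensor closure twice then shows that $\mathcal T_{h,k}$ is semihomogeneous. It is nonzero, as a tensor product of nonzero vector bundles.

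\emph{The normalized class.} Additivity \eqref{eq:2.2} gives
\[
\delta(\mathcal T_{h,k})=\delta(\mathcal A)+(k-1)\,\delta(\mathcal B)+\delta(L^{\otimes h}).
\]
Here $\delta(L^{\otimes h})=hL$. Substituting the values of $\delta(\mathcal A)$ and $\delta(\mathcal B)$ from \eqref{eq:shfABdata} yields \eqref{eq:Dhk} directly.

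\emph{A check on the exterior-power formula.} The only point deserving care is the identity $\delta(\bigwedge^j\mathcal E)=j\,\delta(\mathcal E)$ for a bundle $\mathcal E$ of rank $n$. The quickest verification uses the standard splitting principle:
\[
c_1\Big(\bigwedge^j\mathcal E\Big)=\binom{n-1}{j-1}c_1(\mathcal E).
\]
Dividing by $\binom nj$ gives $\tfrac jn\,c_1(\mathcal E)=j\,\delta(\mathcal E)$. This holds for any vector bundle, not only semihomogeneous ones, so it is safe to use here. I do not expect any genuine obstacle in this lemma. The substance of the argument lies in the subsequent use of \eqref{eq:Dhk} together with Lemma~\ref{lem:numerical}.
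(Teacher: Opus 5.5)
Your proposal is correct and follows essentially the same route as the paper. Semihomogeneity comes from closure of semihomogeneous bundles under tensor products and exterior powers, applied to $\mathcal A$, $\mathcal B$ from Proposition~\ref{prop:FM} and the line bundle $L^{\otimes h}$, and the formula for $\delta(\mathcal T_{h,k})$ comes from \eqref{eq:shfABdata} and \eqref{eq:2.2}; your splitting-principle check of $\delta(\bigwedge^j\mathcal E)=j\,\delta(\mathcal E)$ and your nonvanishing remark are harmless additions.
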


\begin{proof}
Since $\mathcal A$, $\mathcal B$ and $L$ are all semihomogeneous, the properties presented in \S~\ref{subsec: semihomogeneous} show that $\mathcal T_{h,k}$ is semihomogeneous. 
Formula \eqref{eq:Dhk} follows from \eqref{eq:shfABdata} and \eqref{eq:2.2}.
\end{proof}

\begin{proposition}\label{prop:T-vanishing}
For every $1\leq k\leq p+1$ and $h\geq1$,
\[
H^2(X,\mathcal T_{h,k})=0.
\]
\end{proposition}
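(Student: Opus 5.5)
Since $\mathcal T_{h,k}$ is semihomogeneous by Lemma~\ref{lem:Tsemihomogeneous}, it is slope-semistable with respect to $L$, and so is its dual $\mathcal T_{h,k}^\vee$, which is again semihomogeneous with $\delta(\mathcal T_{h,k}^\vee)=-\delta(\mathcal T_{h,k})$. By Serre duality on the abelian surface, $H^2(X,\mathcal T_{h,k})\simeq H^0(X,\mathcal T_{h,k}^\vee)^*$. A nonzero section of $\mathcal T_{h,k}^\vee$ gives a map $\ox\to\mathcal T_{h,k}^\vee$, which forces $\mu_L(\mathcal T_{h,k}^\vee)\geq0$. Hence it suffices to show $L\cdot\delta(\mathcal T_{h,k})>0$. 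Alternatively, we may show $\delta(\mathcal T_{h,k})^2<0$ and then invoke Lemma~\ref{lem:index}(ii) directly.

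\emph{Positive degree.} From \eqref{eq:Dhk},
\[
\frac{L\cdot\delta(\mathcal T_{h,k})}{d}
=2h+\frac{2r-L\cdot C}{a}+(k-1)\frac{2r-2-L\cdot C}{b}.
\]
The first term is smallest at $h=1$. The second summand $2r-2-L\cdot C$ is negative: the Hodge bound gives $L\cdot C\geq 2\sqrt{dr(r-1)}>2r-2$. So the last term is smallest at $k=p+1$. The worst case is therefore exactly $\Theta(p,d,r,L\cdot C)$ of Lemma~\ref{lem:numerical}, and it suffices to check the hypotheses \eqref{eq:arith-hodge} of that lemma:
\begin{itemize}
\item $(L\cdot C)^2\geq L^2C^2=4dr(r-1)$ is the Hodge index theorem, using \eqref{eq:C2}.
\item $L\cdot C\leq(p+2)r+\frac{(r-1)d}{p+2}=(p+2)r+U+r$ is \eqref{eq:wallupperbound}. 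I need to double-check this against the stated bound $(p+3)r+U$, since $(p+2)r+U+r$ equals $(p+3)r+U$. So the hypothesis holds.
\end{itemize}
Lemma~\ref{lem:numerical} then gives $\Theta>0$, and hence the vanishing, outside the three exceptional quadruples \eqref{eq:arith-exceptions}.

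\emph{Exceptional cases.} The remaining work is $(p,d,r,L\cdot C)=(1,10,2,9)$, $(1,12,2,10)$, $(2,17,2,12)$; this is where I expect the real obstacle. For each, I compute $a,b$ from \eqref{eq:abU}:
\begin{itemize}
\item $(1,10,2,9)$: $a=3$, $b=12$.
\item $(1,12,2,10)$: $a=4$, $b=15$.
\item $(2,17,2,12)$: $a=7$, $b=23$.
\end{itemize}
I then expand $\delta(\mathcal T_{h,k})^2$ as a quadratic form in $h$ using $L^2=2d$, $C^2=4$ and the given $L\cdot C$. Write $\delta=xL+yC$ with $y=-d/a-(k-1)d/b<0$. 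Then
\[
\delta^2=2dx^2+2xy\,L\cdot C+4y^2 .
\]
For those $(h,k)$ where $L\cdot\delta\le0$, I plan to verify $\delta^2<0$, so that Lemma~\ref{lem:index}(ii) gives $\IT(1)$ and hence the $H^2$-vanishing. Since $h$ grows, the degree becomes positive for $h\geq2$. In fact $\Theta$ only fails at $h=1$, $k=p+1$ (and possibly other small $k$), so only finitely many pairs need checking.

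As an example, take $(1,10,2,9)$ with $k=2$, $h=1$. Here $x=1+2/3+1/12=7/4$ and $y=-10/3-10/12=-25/6$. Then
\[
\delta^2=20\cdot\tfrac{49}{16}-2\cdot\tfrac74\cdot\tfrac{25}{6}\cdot 9+4\cdot\tfrac{625}{36}\approx61.25-131.25+69.4<0,
\]
as hoped. The other two cases are analogous finite computations.

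If for some pair both tests were borderline (degree zero with $\delta^2=0$), I would fall back on the structure of semihomogeneous bundles. A semihomogeneous bundle with $L\cdot\delta=0$ and $\delta\not\equiv0$ has Jordan--Hölder factors with no sections after twist, by Lemma~\ref{lem:index}. I expect this fallback will not be needed.
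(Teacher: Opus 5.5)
Your proposal is correct and is essentially the paper's proof: you minimize $L\cdot\delta(\mathcal T_{h,k})$ at $(h,k)=(1,p+1)$ to get $d\,\Theta$, observe that the second hypothesis of Lemma~\ref{lem:numerical} is just \eqref{eq:wallupperbound} rewritten, and conclude from semistability and Serre duality outside the three exceptional tuples. For those tuples the paper likewise shows $\delta(\mathcal T_{1,p+1})^2<0$ and applies Lemma~\ref{lem:index}(ii): your value $-5/9$ for $(1,10,2,9)$ matches its table, the other two cases give $-12/5$ and $-578/161$, and every other pair $(h,k)$ has positive degree, as you anticipate.
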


\begin{proof}
Intersecting \eqref{eq:Dhk} with $L$ gives
\begin{equation}\label{eq:Dhk-degree}
L\cdot\delta(\mathcal T_{h,k})
=d\left(
2h+\frac{2r-L\cdot C}{a}
+(k-1)\frac{2r-2-L\cdot C}{b}
\right).
\end{equation}
By \eqref{eq:C2} and the Hodge index theorem, $L\cdot C\geq \sqrt{2dC^2}>2r$. Hence both fractions in \eqref{eq:Dhk-degree} are negative. It follows that the smallest value of $L\cdot\delta(\mathcal T_{h,k})$ in the range of $h$ and $k$ occurs for $h=1, k=p+1$. It is equal to
\[
d\,\Theta(p,d,r,L\cdot C).
\]

Outside the three tuples in \eqref{eq:arith-exceptions}, Lemma~\ref{lem:numerical} shows that this number is positive.  Since $\mathcal T_{h,k}$ is semihomogeneous, it is slope-semistable.  Its dual is semistable of negative slope, so
\[
H^0(X,\mathcal T_{h,k}^{\vee})=0.
\]
By Serre duality, 
\[
H^2(X,\mathcal T_{h,k})=0,
\]

It remains to treat the three tuples in \eqref{eq:arith-exceptions}.  Direct substitution in \eqref{eq:Dhk} gives
\[
\begin{array}{c|c|c|c|c}
(p,d,r,L\cdot C)&a&b&L\cdot\delta(\mathcal T_{1,p+1})
&\delta(\mathcal T_{1,p+1})^2\\ \hline
(1,10,2,9)&3&12&-5/2&-5/{9}\\
(1,12,2,10)&4&15&-2/5&-12/5\\
(2,17,2,12)&7&23&-{34}/{161}&-{578}/{161}.
\end{array}
\]
Each normalized class has negative square. Therefore Lemma~\ref{lem:index}(ii) gives
\[
H^2(X,\mathcal T_{1,p+1})=0.
\]

It remains to consider the pairs $(h, k)\neq (1, p+1)$ in the three cases. For $h=1$,
\[
\begin{array}{c|c}
(p,d,r,L\cdot C)&
L\cdot\delta(\mathcal T_{1,k})\text{ for }k<p+1\\ \hline
(1,10,2,9)&L\cdot\delta(\mathcal T_{1,1})=10/3,\\
(1,12,2,10)&L\cdot\delta(\mathcal T_{1,1})=6,\\
(2,17,2,12)&L\cdot\delta(\mathcal T_{1,1})=102/7,\quad
L\cdot\delta(\mathcal T_{1,2})=1156/161.
\end{array}
\]
All these degrees are positive, so the preceding slope argument applies. For $h\ge 2$, the minimum occurs at $(h, k)=(2, p+1)$; its value is obtained from the corresponding $L\cdot\delta(\mathcal T_{1,p+1})$ by adding $L^2=2d$, and is again positive. This completes the proof.
\end{proof}

\section{The Schur complex and the proof of the main theorems}
\label{sec:mainproof}

We first recall the Schur complex associated with the presentation \eqref{eq:ABM} of $M_L$. Combining this complex with the two vanishing results in \S~\ref{sec:mlcompute}, we complete the proofs of the main theorems.

\begin{lemma}\label{lem:schurcomplex}
For every $k\geq1$, the short exact sequence \eqref{eq:ABM} induces an exact
complex
\begin{equation}\label{eq:schurcpx}
\begin{split}
0\longrightarrow S^k\mathcal A
&\longrightarrow S^{k-1}\mathcal A\otimes\mathcal B
\longrightarrow S^{k-2}\mathcal A\otimes\bigwedge^2\mathcal B
\longrightarrow\cdots\\
&\longrightarrow\mathcal A\otimes\bigwedge^{k-1}\mathcal B
\longrightarrow\bigwedge^k\mathcal B
\longrightarrow\bigwedge^kM_L\longrightarrow0.
\end{split}
\end{equation}
\end{lemma}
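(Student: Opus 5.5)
This is standard multilinear algebra for a short exact sequence $0\to\mathcal A\xrightarrow{\iota}\mathcal B\to M_L\to0$ of vector bundles. The plan is to construct the complex globally, and then check exactness locally, where the sequence splits.

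\emph{Construction of the complex.} The differentials
\[
S^{j}\mathcal A\otimes\bigwedge^{k-j}\mathcal B\longrightarrow S^{j-1}\mathcal A\otimes\bigwedge^{k-j+1}\mathcal B
\]
are the Koszul-type maps. Each one comultiplies $S^j\mathcal A\to S^{j-1}\mathcal A\otimes\mathcal A$, applies $\iota$ to the split-off factor, and then wedges it into the exterior power. The final map $\bigwedge^k\mathcal B\to\bigwedge^kM_L$ is $\bigwedge^k$ of the quotient map. Since we are in characteristic zero, we may realize $S^j\mathcal A$ as the symmetric tensors inside $\mathcal A^{\otimes j}$, so these maps are well defined sheaf homomorphisms. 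Two compositions must vanish:
\begin{enumerate}
\item[(a)] Consecutive differentials compose to zero, because the composite produces $\iota(a)\wedge\iota(a')$ symmetrized in $a,a'$, which is zero.
\item[(b)] The last composite $\mathcal A\otimes\bigwedge^{k-1}\mathcal B\to\bigwedge^kM_L$ is zero, because $\iota(\mathcal A)$ maps to zero in $M_L$.
\end{enumerate}
Both vanishings are local identities, so they can be checked on local frames.

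\emph{Exactness.} Exactness of a complex of coherent sheaves is local. Because all terms are locally free and \eqref{eq:ABM} is a sequence of vector bundles, on a small open set we can choose a splitting $\mathcal B\simeq\mathcal A\oplus M_L$ with $\iota$ the inclusion of the first summand. The complex is compatible with isomorphisms, so it suffices to prove exactness for a split sequence $0\to A\to A\oplus M\to M\to0$ of free modules over a $\mathbb Q$-algebra. Using
\[
\bigwedge^{i}(A\oplus M)=\bigoplus_{i_1+i_2=i}\bigwedge^{i_1}A\otimes\bigwedge^{i_2}M,
\]
the complex decomposes as a direct sum over $m=0,\dots,k$ of the following pieces:
\begin{enumerate}
\item[(i)] For $m<k$, the Koszul complex
\[
0\to S^{k-m}A\to S^{k-m-1}A\otimes A\to\cdots\to\bigwedge^{k-m}A\to0,
\]
tensored with $\bigwedge^mM$.
\item[(ii)] For $m=k$, the isomorphism $\bigwedge^kM\to\bigwedge^kM$.
\end{enumerate}

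\emph{Main obstacle.} The key point is that the complex in (i) is exact for every $n=k-m\ge1$ over a field, or more generally over a $\mathbb Q$-algebra. This is the classical exactness of $S^\bullet A\otimes\bigwedge^\bullet A$ in positive total degree. To prove it, I would exhibit a contracting homotopy built from the reverse Koszul differential: the composite $d h+h d$ acts on the degree-$n$ piece as multiplication by $n$, which is invertible in characteristic zero. The remaining step is bookkeeping: checking that the globally defined differentials restrict to exactly these summands under the local splitting. I would verify this on basis elements, and I expect no genuine difficulty there. Exactness of \eqref{eq:schurcpx} on $X$ then follows.
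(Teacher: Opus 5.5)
Your proposal is correct and follows the same route as the paper: build the complex globally, observe that exactness is local, split $\mathcal B\simeq\mathcal A\oplus M_L$ on small open sets, and check the split case. The paper cites Akin--Buchsbaum--Weyman for the split case, using the characteristic-free Schur complex with divided powers $\Gamma^{k-j}\mathcal A$, which it then identifies with $S^{k-j}\mathcal A$ in characteristic zero; you instead prove the split case by hand, decomposing into the strands $S^{n-j}A\otimes\bigwedge^jA\otimes\bigwedge^mM$ and using the Euler homotopy $dh+hd=n$. Your version is more self-contained, but with symmetric powers the strand is exact only in characteristic zero (for instance $d(x^p)=0$ in characteristic $p$), so ``over a field'' should read ``over a field of characteristic zero''; this is harmless here because $X$ is over $\mathbb C$.
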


\begin{proof}
Over an arbitrary field, 
the Schur complex of the map $\mathcal A\to\mathcal B$ for the one-column partition $(1^k)$ has terms $\Gamma^{k-j}\mathcal A\otimes\bigwedge^j\mathcal B$; see \cite[Definition~V.1.6 and Corollary~V.1.15]{ABW82}, and also \cite[Section~2.4]{Weyman03}. Here $\Gamma^m$ denotes the $m$-th divided power.

Since the cokernel $M_L$ is locally free, the injection $\mathcal A\to\mathcal B$ splits locally. On an open set trivializing $M_L$, one has $\mathcal B\simeq\mathcal A\oplus M_L$. \cite[Corollary~V.1.15]{ABW82} applies to this split injection, implying that the Schur complex is acyclic in positive degrees, with zeroth homology $\bigwedge^kM_L$. Since exactness is local, the global complex is exact. Finally, in characteristic zero, divided and symmetric powers are canonically identified. This proves \eqref{eq:schurcpx}.
\end{proof}

\begin{proposition}\label{prop:green-vanishing}
Under the standing hypotheses \eqref{eq:stamainrange}, one has
\[
H^1\big(X,\bigwedge^kM_L\otimes L^{\otimes h}\big)=0
\]
for every $1\leq k\leq p+1$ and $h\geq1$.
\end{proposition}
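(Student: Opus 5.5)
The plan is to chase the vanishing through the exact Schur complex of Lemma~\ref{lem:schurcomplex}, twisted by $L^{\otimes h}$. Fix $1\le k\le p+1$ and $h\ge1$. Tensoring \eqref{eq:schurcpx} with $L^{\otimes h}$ (which preserves exactness) gives a resolution
\[
0\to S^k\mathcal A\otimes L^{\otimes h}\to\cdots\to \mathcal A\otimes\bigwedge^{k-1}\mathcal B\otimes L^{\otimes h}\to\bigwedge^k\mathcal B\otimes L^{\otimes h}\to\bigwedge^kM_L\otimes L^{\otimes h}\to0.
\]
We split it into short exact sequences by introducing the syzygy sheaf $K_1=\ker\bigl(\bigwedge^k\mathcal B\otimes L^{\otimes h}\to\bigwedge^kM_L\otimes L^{\otimes h}\bigr)$. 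The long exact sequence in cohomology of
\[
0\to K_1\to\bigwedge^k\mathcal B\otimes L^{\otimes h}\to\bigwedge^kM_L\otimes L^{\otimes h}\to0
\]
shows that $H^1(\bigwedge^kM_L\otimes L^{\otimes h})$ sits between $H^1(\bigwedge^k\mathcal B\otimes L^{\otimes h})$ and $H^2(K_1)$. The first group vanishes by Corollary~\ref{prop:B-vanishing}, so it suffices to show $H^2(K_1)=0$.

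The kernel $K_1$ is a quotient of $\mathcal T_{h,k}=\mathcal A\otimes\bigwedge^{k-1}\mathcal B\otimes L^{\otimes h}$: by exactness, the image of the map $\mathcal T_{h,k}\to\bigwedge^k\mathcal B\otimes L^{\otimes h}$ is exactly $K_1$. Since $X$ is a surface, $H^3$ of any sheaf vanishes. Therefore the surjection $\mathcal T_{h,k}\twoheadrightarrow K_1$ induces a surjection $H^2(\mathcal T_{h,k})\twoheadrightarrow H^2(K_1)$, because the next term is $H^3$ of the kernel, which is zero. Proposition~\ref{prop:T-vanishing} gives $H^2(\mathcal T_{h,k})=0$, hence $H^2(K_1)=0$ and the desired $H^1$-vanishing follows.

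For $k=1$ the complex is just \eqref{eq:ABM}, with $K_1=\mathcal A\otimes L^{\otimes h}=\mathcal T_{h,1}$, so the same argument applies verbatim.

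The main point to check is that only the last two terms of the Schur complex are needed. The remaining terms $S^{k-j}\mathcal A\otimes\bigwedge^j\mathcal B$ with $j\le k-2$ never enter, since we only use surjectivity onto $K_1$ and the vanishing of $H^3$ on a surface. The genuine obstacles, namely the $H^1$-vanishing for the $\mathcal B$-term and the $H^2$-vanishing for the $\mathcal T_{h,k}$-term (including the three exceptional numerical tuples), have already been handled in Corollary~\ref{prop:B-vanishing} and Proposition~\ref{prop:T-vanishing}. The proof therefore reduces to this diagram chase, and I expect no further difficulty.
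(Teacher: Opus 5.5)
Your proposal is correct and follows the paper's proof essentially step for step. You split off the kernel $K_{h,k}$ of $\bigwedge^k\mathcal B\otimes L^{\otimes h}\to\bigwedge^kM_L\otimes L^{\otimes h}$, kill the $H^1$ of the $\mathcal B$-term by Corollary~\ref{prop:B-vanishing}, and obtain $H^2(X,K_{h,k})=0$ from the surjection $\mathcal T_{h,k}\twoheadrightarrow K_{h,k}$, Proposition~\ref{prop:T-vanishing}, and the vanishing of $H^3$ on a surface.
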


\begin{proof}
Tensor \eqref{eq:schurcpx} by $L^{\otimes h}$, and put
$
K_{h,k}=\ker\left(
\bigwedge^k\mathcal B\otimes L^{\otimes h}
\longrightarrow\bigwedge^kM_L\otimes L^{\otimes h}
\right).$
The exact sequence
\[
0\longrightarrow K_{h,k}
\longrightarrow\bigwedge^k\mathcal B\otimes L^{\otimes h}
\longrightarrow\bigwedge^kM_L\otimes L^{\otimes h}
\longrightarrow0
\]
yields a long exact sequence containing
\[
H^1\big(X,\bigwedge^k\mathcal B\otimes L^{\otimes h}\big)
\longrightarrow
H^1\big(X,\bigwedge^kM_L\otimes L^{\otimes h}\big)
\longrightarrow H^2(X,K_{h,k}).
\]
The first group vanishes by Corollary~\ref{prop:B-vanishing}.

The exactness of the Schur complex gives a surjection
\[
\mathcal T_{h,k}
=\mathcal A\otimes\bigwedge^{k-1}\mathcal B\otimes L^{\otimes h}
\longrightarrow K_{h,k}.
\]
Let $R_{h,k}$ be its kernel. Taking cohomology of
\[
0\longrightarrow R_{h,k}\longrightarrow\mathcal T_{h,k}
\longrightarrow K_{h,k}\longrightarrow0
\]
induces the exact sequence
\[
\cdots\rightarrow H^2(X,\mathcal T_{h,k})\longrightarrow H^2(X,K_{h,k})
\longrightarrow H^3(X,R_{h,k})\rightarrow \cdots
\]
The first group vanishes by Proposition~\ref{prop:T-vanishing}, and the third group vanishes because $X$ is a surface. Therefore, $H^2(X,K_{h,k})=0$, which proves the claim.
\end{proof}

\begin{proof}[Proof of Theorem~\ref{main thm2}]
Parts~(1) and (2) are exactly Corollary~\ref{Ito18}; note that the corollary includes the equality case, which is not covered by the strict numerical inequality in \cite[Theorem~1.2]{Ito18}. 

For part~(3),  if $\epsilon_1(l)<1/(p+2)$, Theorem~\ref{thm:caucci} gives Property $N_p$. Otherwise the hypotheses \eqref{eq:stamainrange} hold. The construction of Sections~\ref{sec:FM}--\ref{sec:cohomology}, together with Proposition~\ref{prop:green-vanishing}, gives
\[
H^1\big(X,\bigwedge^kM_L\otimes L^{\otimes h}\big)=0
\]
for every $1\leq k\leq p+1$ and $h\geq1$. Proposition~\ref{prop:green} therefore proves Property $N_p$.
\end{proof}

\begin{proof}[Proof of Theorem~\ref{main thm}]
We first prove part~(1).  Suppose that $d\geq7$ and that $X$ contains no elliptic curve of $L$-degree at most $2$.  Proposition~\ref{p=0} gives $\epsilon_1(l)\leq1/2$. If the inequality is strict, then Theorem~\ref{thm:caucci} proves Property $N_0$.  Equality occurs precisely when
\[
L\simeq M^{\otimes2},\qquad M^2=4,
\]
and the same proposition shows that $|M|$ has no fixed component. In this case \cite[Lemma~6]{Ohbuchi93} says that $M^{\otimes2}$ is not normally generated, so $L$ does not satisfy Property $N_0$. Conversely, \cite[Lemma~2]{Ohbuchi93} shows that the fixed-component-freeness of $|M|$ is equivalent to the very ampleness of $M^{\otimes2}$.  Its restriction to every elliptic curve is therefore very ample and has degree at least $3$; thus the exceptional case does satisfy condition~(ii).

For part~(2), the implication {(ii)}$\implies${(i)} follows from Theorem~\ref{main thm2}. For (i)$\implies$(ii), under the numerical hypotheses in either part, an elliptic curve of $L$-degree at most $p+2$ (with $p=0$ in part~(1)) obstructs Property $N_p$ by \cite[Theorem~4.1 and p.~566]{KL19}. This completes the proof.
\end{proof}

\begin{proof}[Proof of Proposition~\ref{GP Conjecture}]
For every elliptic curve $E\subseteq X$, the restriction $L|_E$ is very ample, and hence $L\cdot E=\deg(L|_E)\geq3$. Thus $X$ contains no elliptic curve of $L$-degree at most $2$. Moreover, a polarization of type $(1,d)$ is primitive, so $L\simeq M^{\otimes2}$ is impossible.  Proposition~\ref{p=0} therefore gives $\epsilon_1(l)<1/2$.  In particular, Theorem~\ref{thm:caucci}, with $p=0$, shows that $L$ is projectively normal.

We next prove that its homogeneous ideal is generated in degrees at most three.  Choose a rational number $x$ with $\frac{\epsilon_1(l)}{1-\epsilon_1(l)}<x<1$. By \cite[Theorem~D, p.841]{JP20}, the $\mathbb Q$-twisted bundle $M_L\langle xl\rangle$ is $\IT(0)$.  For every $h\geq2$, one has
\[
M_L^{\otimes2}\otimes L^{\otimes h}
=\bigl(M_L\langle xl\rangle\bigr)^{\otimes2}
 \otimes\ox\langle(h-2x)l\rangle.
\]
Since $h-2x>0$, the last term is $\IT(0)$. Applying Proposition \ref{positivity of tensor}, we obtain that
\[
H^1\big(X,M_L^{\otimes2}\otimes L^{\otimes h}\big)=0
\qquad h\geq2.
\]
The specialization $p=r=1$ of \cite[Proposition~6.3(a)]{PP04} now shows that the homogeneous ideal is generated by forms of degree at most three. The embedding by a complete linear system is nondegenerate, so these are quadrics and cubics.

Finally, suppose that $d\geq10$. Theorem~\ref{main thm} (2), applied with $p=1$, says that $L$ satisfies Property $N_1$ if and only if there is no elliptic curve $E\subseteq X$ with $L\cdot E\leq3$.  Since $L$ is already projectively normal, Property $N_1$ is equivalent to its homogeneous ideal being generated by quadrics.  This proves the final assertion.
\end{proof}

\section{Examples and complements}\label{sec:examples}
Remarks in \cite[Remark~4.2]{Rojas22} and \cite[Remark~2.3]{Caucci20} indicate that no polarized abelian surface with irrational $\epsilon_1(l)$ was previously known, although it was expected.  The following example provides one; see also \cite{Ito26}.
\begin{example}[Irrational $\epsilon_1(l)$]\label{irrational threshold}
    Let $(X,L)$ be a polarized abelian surface such that
    \[\NS(X)=\mathbb{Z}[L]\oplus\mathbb{Z}[C], \quad L^2=34,\quad
    L\cdot C=12, \text{ and } C^2=4.\]
    We choose the surface so that both $L$ and $C$ are ample, as in the  construction below.  Since $h^0(X,\ox(C))=2$, the line bundle $\ox(C)$ is not globally generated. If $q$ is a base point of $|C|$,    then $h^1(\ox(C)\otimes\is q)=1$.  Let $\shf F$ be a non-split   extension of $\is q$ by $\sshf X(-C)$:
    \begin{align*}
        0\to \sshf{X}(-C)\to \shf{F}\to \is{q}\to 0.
    \end{align*}
    The Cayley--Bacharach condition in Serre's construction is precisely the choice of $q$ as a base point of $|C|$. Thus $\shf F$ is a vector    bundle with $\ch(\shf F)=(2,-C,1)$.
    
    Suppose, for contradiction, that $\shf F$ is $L$-Gieseker unstable. Taking the reflexive hull of a saturated rank-one destabilizing subsheaf gives a nonzero map $\ox(-B)\to\shf F$ with    $L\cdot B\leq6$. Since    $\hom(\sshf X(-B),\shf F)\neq0$, one has
    \[\hom(\sshf X(-B),\sshf X(-C))+\hom(\sshf X(-B),\is q)\neq0.\]
    Thus $B$ is effective, and hence $B^2\geq0$.  The Hodge index theorem gives
    \[
    36\geq(L\cdot B)^2\geq L^2B^2=34B^2,
    \]
    and the intersection form is even, so $B^2=0$.  On the other hand, the equation $(aL+bC)^2=0$ has no nonzero rational solution: solving for $b/a$ requires the square root of $(L\cdot C)^2-L^2C^2=8$, which is irrational. This    contradiction proves that $\shf F$ is $L$-Gieseker stable.
    
    Both $\shf F$ and $\shf G=\ox(-C)$ have discriminant zero. Their stability at large volume and Proposition~\ref{prop:semihomogeneousobj} show that they are stable throughout $\Stab_L(X)$.  Rotating the non-split extension gives
    \[
    0\longrightarrow\shf F\longrightarrow\is q
      \longrightarrow\ox(-C)[1]\longrightarrow0
    \]
    in the tilted heart along the numerical wall. Its radius is positive,    and the two outer terms are stable of the same phase there; hence this is an actual wall.

    We claim that it is the first actual wall.  Indeed, the absence of a nonzero isotropic class rules out rank-one walls.  For any wall of rank    $r'\geq2$, write its divisor class as $C'=aL+bC$ and put    $m'=L\cdot C'$. Proposition~\ref{prop:wallIq} and the intersection matrix give
    \[
    {m'}^2-68r'(r'-1)
      =(L\cdot C')^2-L^2{C'}^2=8b^2.
    \]
    Positivity of the wall radius forces $8b^2<17$, so $b\in\{0,\pm1\}$.  Its squared radius is
    \[
    {R'}^2=\frac{17-8b^2}{17{m'}^2}.
    \]
    If $b=\pm1$, then $m'=34a+12b>0$ gives $m'\geq12$, with equality only for $(a,b)=(0,1)$.  If $b=0$, then $m'\geq34$ and $R'=1/m'$. Thus the displayed wall has strictly maximal radius among all other possible walls. Since numerical walls for $\is q$ are nested \cite[Theorem~2.8]{LR23}, it is the first actual wall. Proposition~\ref{prop:1stwall} therefore
    gives
    \[
    \epsilon_1(l)
      =\frac{12-\sqrt{12^2-4\cdot17\cdot2}}{2\cdot17}
      =\frac{6-\sqrt2}{17}\notin\mathbb Q.
    \]

    To see that such a polarized abelian surface exists, put
\[
D=L-3C.
\]
The required intersection matrix is then equivalent to
\[
C^2=4,\qquad D^2=-2,\qquad C\cdot D=0.
\]
Let
\[
\Lambda=U_1\oplus U_2\oplus U_3\simeq U^{\oplus 3},
\]
where $U_i=\mathbb Ze_i\oplus\mathbb Zf_i$, with
$e_i^2=f_i^2=0$ and $e_i\cdot f_i=1$. The vectors
\[
c=e_1+2f_1,\qquad \delta=e_2-f_2
\]
generate a primitive sublattice
\[
M:=\mathbb Zc\oplus\mathbb Z\delta
   \simeq \langle 4\rangle\oplus\langle-2\rangle
   \subset \Lambda.
\]
Choose a very general period
\[
[\sigma]\in \Omega_M:=
\left\{
[\sigma]\in\mathbb P(M^\perp\otimes\mathbb C)
\ \middle|\
(\sigma,\sigma)=0,\ 
(\sigma,\overline\sigma)>0
\right\}.
\]
By the surjectivity of the period map for complex two-dimensional tori \cite[Theorem~II, especially pp.~56--57]{Shioda78}, the period line $[\sigma]$ is realized as $H^{2,0}(X)$ for a complex two-torus $X$, and by the Lefschetz $(1,1)$-theorem, $M\subseteq\NS(X)$. Since the condition that an additional integral class be of type $(1,1)$ defines a proper hyperplane section of the period domain, a very general choice of $[\sigma]$ gives
\[
\NS(X)=M.
\]
Denote the divisor classes corresponding to $c,\delta$ by $C,D$, respectively, and put $L=3C+D$. Then
\[
L^2=34,\qquad L\cdot C=12.
\]
Since $L^2>0$, either $L$ or $-L$ is ample. After replacing $(L, C, D)$ simultaneously by their negatives if necessary, we may assume that $L$ is ample. Thus $X$ is an abelian surface. Moreover, $C^2>0$ and $L\cdot C>0$, so $C$ is ample. Finally,
\[
\NS(X)
 =\mathbb Z[C]\oplus\mathbb Z[D]
 =\mathbb Z[L]\oplus\mathbb Z[C].
\]
\end{example}

The next example shows that when $d\le2(p+2)^2$, the inequality $\epsilon_1(l)\ge\frac1{p+2}$ can occur.

\begin{example}\label{threshold can be large}
Let $E_1$ and $E_2$ be non-isogenous elliptic curves, and let $X$ be the quotient $E_1\times E_2$ by a diagonal subgroup of order two. For a general such quotient, the images of the two curves, again denoted $E_1,E_2$, satisfy
\[
\NS(X)=\mathbb Z[E_1]\oplus\mathbb Z[E_2],\qquad
E_1^2=E_2^2=0,\qquad E_1\cdot E_2=2.
\]
Let
$L=aE_1+bE_2$, where $a,b\in\mathbb Z_{>0}$.  Then $d=\frac{L^2}{2}=2ab$, $L\cdot E_1=2b$, and $L\cdot E_2=2a$.  We make the first-wall computation explicit.   By Proposition~\ref{prop:wallIq}, any actual wall of $\is q$ is determined by numerical data
\[
\ch(\shf F)=(r,-C,r-1),
\qquad
C=xE_1+yE_2,
\]
where $r\geq1$, $x,y\in\mathbb Z_{\geq0}$, and
\begin{equation}\label{eq:quotient-wall-diophantine}
4xy=C^2=2r(r-1).
\end{equation}
When $r=1$, the class $C$ is a positive multiple of either $E_1$ or $E_2$.  When $r\geq2$, the class $C$ is ample, so $x,y>0$.

Set
\[
m=L\cdot C=2(bx+ay).
\]
By \eqref{eq:first-wall-circle}, the corresponding numerical wall is
\begin{equation*}\label{eq:quotient-wall-circle}
t^2+
\left(
s+\frac{2r-1}{2(bx+ay)}
\right)^2
=
\frac{(2r-1)^2}{4(bx+ay)^2}
-\frac1{2ab}.
\end{equation*}
Thus, let
\[
\lambda
=\frac{2r-1}{2(bx+ay)},
\]
then the squared radius is $R^2=\lambda^2-\frac1{2ab}$. Since the numerical walls of $\is q$ are nested, the first wall is the one for which $\lambda$ is maximal.

There are three extremal candidates:
\[
\begin{array}{c|c|c}
(r,C)&\lambda&R^2\\ \hline
(1,E_1)
&
\tfrac1{2b}
&
\tfrac{a-2b}{4ab^2}
\\[8pt]
(2,E_1+E_2)
&
\tfrac3{2(a+b)}
&
\tfrac{(2a-b)(2b-a)}
      {4ab(a+b)^2}
\\[8pt]
(1,E_2)
&
\tfrac1{2a}
&
\tfrac{b-2a}{4a^2b}.
\end{array}
\]
The two rank-one walls are induced by the sequences
\[
0\longrightarrow\mathcal O_X(-E_i(q))
\longrightarrow\is{q}
\longrightarrow\mathcal O_{E_i(q)}(-q)
\longrightarrow0,
\]
where $E_i(q)$ denotes the translate of $E_i$ through $q$.

The middle wall is a rank-two semihomogeneous wall with
\begin{equation*}\label{eq:quotient-rank-two-wall}
0\longrightarrow\shf G
\longrightarrow\shf F
\longrightarrow\is{q}
\longrightarrow0,
\end{equation*}
where
\[
\ch(\shf F)=(2,-E_1-E_2,1),
\qquad
\ch(\shf G)=(1,-E_1-E_2,2).
\]
Since $\shf F$ and $\shf G$ are simple semihomogeneous bundles, they are stable throughout the relevant slice by Proposition~\ref{prop:semihomogeneousobj}.  Hence this wall is actual whenever its radius is positive, namely when
\[
\frac a2<b<2a.
\]

It remains to check that no other wall is larger.  Relation \eqref{eq:quotient-wall-diophantine} implies
\begin{equation*}\label{eq:quotient-integral-estimates}
x+2y\geq2r-1,
\qquad
2x+y\geq2r-1.
\end{equation*}
For $r\geq2$, for example,
\[
(x+2y)^2\geq8xy
=4r(r-1)=(2r-1)^2-1,
\]
and the first inequality follows by integrality; the second is similar. The case $r=1$ is immediate since $C\neq0$.

If $b\leq a/2$, then $bx+ay\geq b(x+2y)\geq b(2r-1)$ and consequently $\lambda\leq\frac1{2b}$. Thus the $E_1$-wall is outermost. Similarly, if $b\geq2a$, then $\lambda\leq\frac1{2a}$, and the primitive $E_2$-wall is outermost.

Finally, suppose that $a/2\leq b\leq2a$.  In this range, $3(bx+ay)\geq(a+b)(2r-1)$. Therefore, $\lambda \leq\frac3{2(a+b)}$, with equality for $(r,C)=(2,E_1+E_2)$.  Hence the rank-two wall is the first actual wall throughout $a/2<b<2a$.

We can now apply Proposition~\ref{prop:1stwall}.  For the rank-two wall,
\[
r=2,\qquad
L\cdot (E_1+E_2)=2(a+b),
\qquad \text{ and }\qquad
\overline\Delta
=4(a+b)^2-4(2ab)\cdot2
=4(a-b)^2.
\]
Consequently, $\epsilon_1(l) =1/{\max\{a,b\}}$.

 At $b=a/2$ and $b=2a$, the maximal numerical wall has radius zero, so there is no actual wall of positive radius; the no-wall formula gives $\epsilon_1(l)=1/\sqrt d$, which agrees with the adjacent expressions.  We therefore obtain
\[\epsilon_1(l)=\left\{\begin{array}{ll}
    \frac{1}{2b}  & b\le \frac{a}{2}\\
    \frac{1}{a} &   \frac{a}{2}\le b\le a\\
     \frac{1}{b} &   a\le b\le 2a\\
     \frac{1}{2a} &   2a\le b\\ 
   \end{array}\right.\]
Fix an integer $p\geq0$, and assume that $d\geq(p+2)^2+1$ and
$L\cdot E_i\ge p+3$ for $i=1,2$.

\textbf{Claim.} For every elliptic curve $E\subseteq X$, one has
$L\cdot E\ge p+3$.

Indeed, every primitive isotropic class in this N\'eron--Severi lattice lies on one of the two rays generated by $E_1$ and $E_2$.  Thus every elliptic curve is a translate of one of them, and the claim follows from the displayed assumptions.

Nevertheless, $\epsilon_1(l)\ge\frac1{p+2}$ can still occur.  For example, if $a=b=\lceil(p+\frac52)/\sqrt2\rceil$, then
\[
\epsilon_1(l)
=\frac{1}{\lceil(p+\frac52)/\sqrt2\rceil}
=\frac{\sqrt2}{p+2}+O\big(\frac{1}{(p+2)^2}\big),
\qquad p\to\infty.
\]

\begin{figure}[ht]
\centering
\includegraphics[width=0.8\textwidth]{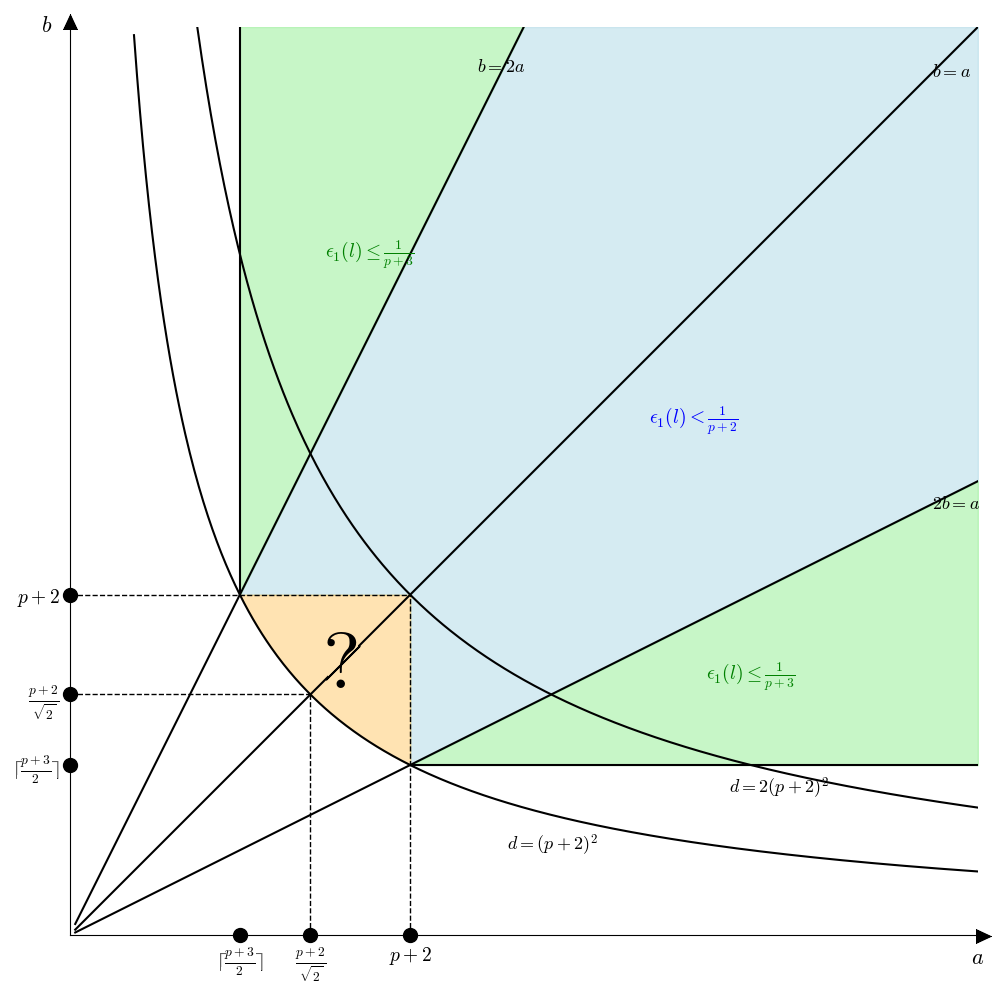}
\caption{Fix $p\ge 1$. The green (resp. blue) region represents the rank one (resp. rank two) cases, where $\epsilon_1(l)<\frac{1}{p+2}$. In the orange region, the strict inequality fails. Theorem~\ref{main thm2}(3) nevertheless proves Property $N_p$ under the elliptic-curve hypothesis.}
\label{fig:threshold-regions}
\end{figure}
\end{example}

The following example indicates that the lower bound on $d$ in Theorem~\ref{main thm} is sharp for $p=0$ and $1$.

\begin{example}\label{sharpness for p=0, 1}
A very general polarization of type $(1,6)$ does not satisfy Property $N_0$, and a very general one of type $(1,9)$ does not satisfy Property $N_1$. 

For a very general polarized abelian surface of type $(1,d)$, one has $\NS(X)=\mathbb Z[L]$ and $X$ contains no elliptic curve. If $d\ge 5$, then $L$ is very ample by Reider's criterion \cite{Reider88}.

If $d=6$, then 
\[\dim\Sym^2H^0(X,L)=\binom{7}{2}<h^0(X,L^{\otimes2})=24,\]
hence $L$ fails $N_0$. 

If $d=9$, assume, for contradiction, that $L$ satisfies Property $N_1$. Put $V=H^0(X,L)$, and let $I$ be the homogeneous ideal of the embedding. Since $N_1$ implies projective normality, we have
\[
\dim I_2=\dim\Sym^2V-h^0(X,L^{\otimes 2})
=\binom{10}{2}-36=9
\]
and
\[
\dim I_3=\dim\Sym^3V-h^0(X,L^{\otimes 3})
=\binom{11}{3}-81=84.
\]
Property $N_1$ would require $I$ to be generated by quadrics.  The multiplication map $V\otimes I_2\longrightarrow I_3$ would therefore be surjective, which is impossible because
\[
\dim(V\otimes I_2)=9\cdot9=81<84=\dim I_3.
\]
Hence $L$ fails Property $N_1$.
\end{example}

We finish with a sharp upper bound for $\epsilon_1(l)$.
\begin{proposition}\label{a  rough bound  of r}
In the setting of Proposition~\ref{prop:1stwall}, assume $r\geq2$.  Then
\[
\epsilon_1(l)\leq\sqrt{\frac{r}{(r-1)d}}.
\]
The inequality is strict if and only if the $L$-discriminant $\overline\Delta>0$.  Consequently, $\epsilon_1(l)\leq\sqrt{2/d}$.
\end{proposition}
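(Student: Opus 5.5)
The plan is to start from the explicit formula \eqref{eq:rootthreshold} of Proposition~\ref{prop:1stwall}(ii):
\[
\epsilon_1(l)=\frac{2r}{L\cdot C+\sqrt{\overline\Delta}},\qquad \overline\Delta=(L\cdot C)^2-4dr(r-1)\geq 0.
\]
Since $\sqrt{\overline\Delta}\geq0$ and, by the Hodge index theorem together with \eqref{eq:C2}, $L\cdot C\geq\sqrt{L^2C^2}=2\sqrt{dr(r-1)}$, the denominator is at least $2\sqrt{dr(r-1)}$. This gives
\[
\epsilon_1(l)\leq\frac{2r}{2\sqrt{dr(r-1)}}=\sqrt{\frac{r}{(r-1)d}}.
\]

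For the equality statement, observe that the denominator equals $2\sqrt{dr(r-1)}$ exactly when $\sqrt{\overline\Delta}=0$ and $L\cdot C=2\sqrt{dr(r-1)}$. These two conditions are the same condition, namely $\overline\Delta=0$. So equality holds if and only if $\overline\Delta=0$, and the inequality is strict if and only if $\overline\Delta>0$. One can also argue through the middle expression in \eqref{eq:rootthreshold}: the function $x\mapsto (x-\sqrt{x^2-4dr(r-1)})/(2(r-1)d)$ is strictly decreasing on $x\geq2\sqrt{dr(r-1)}$, so its maximum is attained only at the Hodge boundary.

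For the consequence, note that $r/(r-1)\leq2$ for every $r\geq2$, so $\sqrt{r/((r-1)d)}\leq\sqrt{2/d}$. This covers the case of a rank $r\geq2$ wall. In the other cases described by Proposition~\ref{prop:1stwall}, the bound still holds. With no wall, \eqref{eq:4.8} gives $\epsilon_1(l)=1/\sqrt d<\sqrt{2/d}$. With a rank-one wall, \eqref{eq:4.9} and \eqref{eq:4.15} (with $r=1$) give $L\cdot C=m<\sqrt d$, and the root ordering gives $\epsilon_1(l)=1/m$. Here I would use $-1/m<-1/\sqrt d$, which gives $1/m>1/\sqrt d$, so this case needs a different bound. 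I expect this step to be the obstacle, because the rank-one value $1/m$ can be as large as $1$. I would therefore state the consequence only in the setting $r\geq2$, as the proposition's hypothesis indicates, and read ``consequently'' as uniform in $r\geq2$.
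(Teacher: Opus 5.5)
Your proof is correct and follows the paper's argument: you combine \eqref{eq:rootthreshold} with the Hodge bound $L\cdot C\geq 2\sqrt{r(r-1)d}$, note that equality holds exactly when $\overline\Delta=0$, and then use $r/(r-1)\leq 2$. You are also right to restrict the final consequence to $r\geq2$, because rank-one walls can exceed $\sqrt{2/d}$: for instance, taking $b<a/4$ in Example~\ref{threshold can be large} gives $\epsilon_1(l)=1/(2b)>1/\sqrt{ab}=\sqrt{2/d}$.
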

\begin{proof}
By the Hodge index theorem,
\[
L\cdot C\geq2\sqrt{r(r-1)d}.
\]
On the other hand,
\[
\epsilon_1(l)=-p^+_{\shf Q}
=
\frac{L\cdot C-\sqrt{\overline\Delta}}
     {2(r-1)d}
=
\frac{2r}{L\cdot C+\sqrt{\overline\Delta}}
\leq
\frac{2r}{2\sqrt{r(r-1)d}}
=
\sqrt{\frac{r}{(r-1)d}}.
\]
Equality holds if and only if $\overline\Delta=(L\cdot C)^2-4dr(r-1)=0$.
\end{proof}
\begin{remark}
The bound is sharp: taking $a=b$ in Example~\ref{threshold can be large} gives $r=2$ and $d=2a^2$, hence $\epsilon_1(l)=1/a=\sqrt{2/d}$.
\end{remark}

\bibliographystyle{amsalpha}
\bibliography{abelian_syzygies}

@book{AN10,
  title={Koszul cohomology and algebraic geometry},
  author={Aprodu, Marian and Nagel, Jan},
  volume={52},
  year={2010},
  publisher={American Mathematical Soc.}
}

@article{AB12,
  title={Bridgeland-stable moduli spaces for {$K$}-trivial surfaces},
  author={Arcara, Daniele and Bertram, Aaron},
  journal={Journal of the European Mathematical Society},
  volume={15},
  number={1},
  pages={1--38},
  year={2013},
  publisher={European Mathematical Society-EMS-Publishing House GmbH}
}

@article{Agostini17,
  title={A note on homogeneous ideals of abelian surfaces},
  author={Agostini, Daniele},
  journal={Bulletin of the London Mathematical Society},
  volume={49},
  number={2},
  pages={220--225},
  year={2017},
  publisher={Wiley Online Library}
}

@article {ABW82,
    AUTHOR = {Akin, Kaan and Buchsbaum, David A. and Weyman, Jerzy},
     TITLE = {Schur functors and {S}chur complexes},
   JOURNAL = {Adv. Math.},
  FJOURNAL = {Advances in Mathematics},
    VOLUME = {44},
      YEAR = {1982},
    NUMBER = {3},
     PAGES = {207--278},
       DOI = {10.1016/0001-8708(82)90039-1}
}

@article{Bayer19,
  title={A short proof of the deformation property of {B}ridgeland stability conditions},
  author={Bayer, Arend},
  journal={Mathematische Annalen},
  volume={375},
  number={3},
  pages={1597--1613},
  year={2019},
  publisher={Springer}
}

@article{BMS16,
  title={The space of stability conditions on abelian threefolds, and on some {C}alabi-{Y}au threefolds},
  author={Bayer, Arend and Macr{\`\i}, Emanuele and Stellari, Paolo},
  journal={Inventiones mathematicae},
  volume={206},
  pages={869--933},
  year={2016},
  publisher={Springer}
}

@article{Bridgeland07,
  title={Stability conditions on triangulated categories},
  author={Bridgeland, Tom},
  journal={Annals of Mathematics},
  volume={166},
  number={2},
  pages={317--345},
  year={2007},
  publisher={JSTOR}
}

@ARTICLE{Bridgeland:K3,
  author     = {Bridgeland, Tom},
  title      = {Stability conditions on {$K3$} surfaces},
  journal    = {Duke Math. J.},
  year       = 2008,
  volume     = 141,
  pages      = {241--291},
  number     = 2,
  coden      = {DUMJAO},
  doi        = {10.1215/S0012-7094-08-14122-5},
  eprint     = {arXiv:math/0307164},
  fjournal   = {Duke Mathematical Journal},
  issn       = {0012-7094},
  mrclass    = {14F05 (14J28 18E30)},
  mrnumber   = {2376815 (2009b:14030)},
  mrreviewer = {Andrei D. Halanay},
  url        = {http://dx.doi.org/10.1215/S0012-7094-08-14122-5}
}

@article {Caucci20,
    AUTHOR = {Caucci, Federico},
     TITLE = {The basepoint-freeness threshold and syzygies of abelian
              varieties},
   JOURNAL = {Algebra Number Theory},
    VOLUME = {14},
      YEAR = {2020},
    NUMBER = {4},
     PAGES = {947--960},
       DOI = {10.2140/ant.2020.14.947}
}

@article{Dell25,
  title={Stability conditions on free abelian quotients},
  author={Dell, Hannah},
  journal={{\'E}pijournal de G{\'e}om{\'e}trie Alg{\'e}brique},
  volume={9},
  year={2025},
  publisher={Episciences. org}
}

@article {FLZ:ab3,
    AUTHOR = {Fu, Lie and Li, Chunyi and Zhao, Xiaolei},
     TITLE = {Stability manifolds of varieties with finite {A}lbanese
              morphisms},
   JOURNAL = {Trans. Amer. Math. Soc.},
  FJOURNAL = {Transactions of the American Mathematical Society},
    VOLUME = {375},
      YEAR = {2022},
    NUMBER = {8},
     PAGES = {5669--5690},
      ISSN = {0002-9947,1088-6850},
   MRCLASS = {14F08 (14J60 14K05 18G80)},
  MRNUMBER = {4469233},
MRREVIEWER = {Arnav\ Tripathy},
       DOI = {10.1090/tran/8651},
       URL = {https://0-doi-org.pugwash.lib.warwick.ac.uk/10.1090/tran/8651},
}

@article{Garcia04,
  title={Projective normality of abelian surfaces of type (1, 2 d)},
  author={Garc{\'\i}a, Luis Fuentes},
  journal={manuscripta mathematica},
  volume={114},
  number={3},
  pages={385--390},
  year={2004},
  publisher={Springer}
}

@article{GP98,
  title={Equations of $(1, d)$-polarized abelian surfaces},
  author={Gross, Mark and Popescu, Sorin},
  journal={Mathematische Annalen},
  volume={310},
  number={2},
  pages={333--377},
  year={1998}
}

@article {Gulbrandsen08,
    AUTHOR = {Gulbrandsen, Martin G.},
     TITLE = {Fourier--{M}ukai transforms of line bundles on derived
              equivalent abelian varieties},
   JOURNAL = {Le Matematiche},
    VOLUME = {63},
      YEAR = {2008},
    NUMBER = {1},
     PAGES = {123--137}
}

@article {Ito18,
    AUTHOR = {Ito, Atsushi},
     TITLE = {A remark on higher syzygies on abelian surfaces},
   JOURNAL = {Comm. Algebra},
    VOLUME = {46},
      YEAR = {2018},
    NUMBER = {12},
     PAGES = {5342--5347},
       DOI = {10.1080/00927872.2018.1464172}
}

@article{Ito22,
  author  = {Ito, Atsushi},
  title   = {M-regularity of $\mathbb{Q}$-twisted sheaves and its application to linear systems on abelian varieties},
  journal = {Trans. Amer. Math. Soc.},
  year    = {2022},
  volume  = {375},
  number  = {9},
  pages   = {6653--6673},
  doi     = {10.1090/tran/8725}
}

@article{Ito23,
  title={Higher syzygies on general polarized Abelian varieties of type (1,$\cdots$, 1, d)},
  author={Ito, Atsushi},
  journal={Mathematische Nachrichten},
  volume={296},
  number={8},
  pages={3395--3410},
  year={2023},
  publisher={Wiley Online Library}
}

@article{Ito26,
  title={Remarks on basepoint-freeness thresholds of polarized abelian surfaces},
  author={Ito, Atsushi},
  journal={arXiv preprint arXiv:2605.19353},
  year={2026}
}

@article {JP20,
    AUTHOR = {Jiang, Zhi and Pareschi, Giuseppe},
     TITLE = {Cohomological rank functions on abelian varieties},
   JOURNAL = {Ann. Sci. \'{E}c. Norm. Sup\'er. (4)},
    VOLUME = {53},
      YEAR = {2020},
    NUMBER = {4},
     PAGES = {815--846},
       DOI = {10.24033/asens.2435}
}

@article {KL19,
    AUTHOR = {K\"uronya, Alex and Lozovanu, Victor},
     TITLE = {A {R}eider-type theorem for higher syzygies on abelian
              surfaces},
   JOURNAL = {Algebr. Geom.},
    VOLUME = {6},
      YEAR = {2019},
    NUMBER = {5},
     PAGES = {548--570},
       DOI = {10.14231/AG-2019-025}
}

@article {LR23,
    AUTHOR = {Lahoz, Mart\'{\i} and Rojas, Andr\'es},
     TITLE = {Chern degree functions},
   JOURNAL = {Commun. Contemp. Math.},
    VOLUME = {25},
      YEAR = {2023},
    NUMBER = {5},
     PAGES = {2250007},
       DOI = {10.1142/S0219199722500079}
}

@book {LazarsfeldI,
    AUTHOR = {Lazarsfeld, Robert},
     TITLE = {Positivity in algebraic geometry. {I}},
    SERIES = {Ergebnisse der Mathematik und ihrer Grenzgebiete. 3. Folge},
    VOLUME = {48},
 PUBLISHER = {Springer-Verlag},
   ADDRESS = {Berlin},
      YEAR = {2004},
       DOI = {10.1007/978-3-642-18808-4}
}

@article{Lazarsfeld90,
  title={Projectivit{\'e} normale des surfaces ab{\'e}liennes},
  author={Lazarsfeld, Robert},
  journal={(written by O. Debarre), preprint Europroj},
  volume={14},
  year={1990}
}

@article {Mukai78,
    AUTHOR = {Mukai, Shigeru},
     TITLE = {Semi-homogeneous vector bundles on an abelian variety},
   JOURNAL = {J. Math. Kyoto Univ.},
    VOLUME = {18},
      YEAR = {1978},
    NUMBER = {2},
     PAGES = {239--272},
       DOI = {10.1215/kjm/1250522574}
}

@article{Mukai81,
  title={Duality between {$D(X)$} and {$D(\widehat{X})$} with its application to {Picard} sheaves},
  author={Mukai, Shigeru},
  journal={Nagoya Mathematical Journal},
  volume={81},
  pages={153--175},
  year={1981},
  publisher={Cambridge University Press}
}

@book {Mumford74,
    AUTHOR = {Mumford, David},
     TITLE = {Abelian varieties},
   EDITION = {Second},
 PUBLISHER = {Oxford University Press},
   ADDRESS = {London},
      YEAR = {1974}
}

@article {Ohbuchi93,
    AUTHOR = {Ohbuchi, Akira},
     TITLE = {A note on the normal generation of ample line bundles on an
              abelian surface},
   JOURNAL = {Proc. Amer. Math. Soc.},
    VOLUME = {117},
      YEAR = {1993},
    NUMBER = {1},
     PAGES = {275--277}
}

@article{Pareschi00,
  title={Syzygies of abelian varieties},
  author={Pareschi, Giuseppe},
  journal={Journal of the American Mathematical Society},
  volume={13},
  number={3},
  pages={651--664},
  year={2000}
}

@article {PP04,
    AUTHOR = {Pareschi, Giuseppe and Popa, Mihnea},
     TITLE = {Regularity on abelian varieties. {II}. {B}asic results on
              linear series and defining equations},
   JOURNAL = {J. Algebraic Geom.},
    VOLUME = {13},
      YEAR = {2004},
    NUMBER = {1},
     PAGES = {167--193},
       DOI = {10.1090/S1056-3911-03-00345-X}
}

@inproceedings{PP11,
  author    = {Pareschi, Giuseppe and Popa, Mihnea},
  title     = {Regularity on Abelian varieties {III}: relationship with generic vanishing and applications},
  booktitle = {Grassmannians, moduli spaces and vector bundles},
  series    = {Clay Math. Proc.},
  volume    = {14},
  pages     = {141--167},
  year      = {2011},
  publisher = {Amer. Math. Soc.},
  address   = {Providence, RI},
  doi       = {10.1090/clay/014/10}
}

@article {Reider88,
    AUTHOR = {Reider, Igor},
     TITLE = {Vector bundles of rank {$2$} and linear systems on algebraic
              surfaces},
   JOURNAL = {Ann. of Math. (2)},
    VOLUME = {127},
      YEAR = {1988},
    NUMBER = {2},
     PAGES = {309--316},
       DOI = {10.2307/2007055}
}

@article{Rojas22,
  title={The basepoint-freeness threshold of a very general abelian surface},
  author={Rojas, Andr{\'e}s},
  journal={Selecta Mathematica},
  volume={28},
  number={2},
  pages={Paper No. 34, 14 pp.},
  year={2022},
  publisher={Springer}
}

@article{Shioda78,
  title={The period map of abelian surfaces},
  author={Shioda, Tetsuji},
  journal={J. Fac. Sci. Univ. Tokyo},
  volume={25},
  pages={47--59},
  year={1978}
}

@book {Weyman03,
    AUTHOR = {Weyman, Jerzy},
     TITLE = {Cohomology of vector bundles and syzygies},
    SERIES = {Cambridge Tracts in Mathematics},
    VOLUME = {149},
 PUBLISHER = {Cambridge University Press},
   ADDRESS = {Cambridge},
      YEAR = {2003},
       DOI = {10.1017/CBO9780511546556}
}

@ARTICLE{HMS:generic_K3s,
  author     = {Huybrechts, Daniel and Macr{\`{\i}}, Emanuele and Stellari, Paolo},
  title      = {Stability conditions for generic {$K3$} categories},
  journal    = {Compos. Math.},
  year       = 2008,
  volume     = 144,
  pages      = {134--162},
  number     = 1,
  doi        = {10.1112/S0010437X07003065},
  eprint     = {arXiv:math/0608430},
  fjournal   = {Compositio Mathematica},
  issn       = {0010-437X},
  mrclass    = {14J28 (14F05 18E30)},
  mrnumber   = {2388559 (2009a:14050)},
  mrreviewer = {Justin Sawon},
  url        = {http://dx.doi.org/10.1112/S0010437X07003065}
}

@article{Happel-al:tilting,
  author        = {Happel, Dieter and Reiten, Idun and Smal{\o}, Sverre O.},
  title         = {Tilting in abelian categories and quasitilted algebras},
  journal       = {Mem. Amer. Math. Soc.},
  fjournal      = {Memoirs of the American Mathematical Society},
  volume        = {120},
  number        = {575},
  pages         = {viii+88},
  year          = {1996},
  doi           = {10.1090/memo/0575},
  eprint        = {},
  archivePrefix = {},
}

\bigskip
{\small\noindent\textsc{Mathematics Institute, University of Warwick\\
Coventry CV4 7AL, United Kingdom}\\
\emph{E-mail address}:  \texttt{c.li.25@warwick.ac.uk}\par}

\bigskip
{\small\noindent\textsc{School of Mathematics, Sun Yat-sen University\\
W. 135 Xingang Rd., Guangzhou, Guangdong 510275, P.R.~China}\\
\emph{E-mail address}:  \texttt{songlei3@mail.sysu.edu.cn}\par}

\bigskip
{\small\noindent\textsc{Institute for Theoretical Sciences, Westlake University\\
No. 600 Dunyu Rd, Xihu District, Hangzhou, Zhejiang 310030, P.R.~China}\\
\emph{E-mail address}:  \texttt{wangxiao24@westlake.edu.cn}\par}

\end{document}